\definecolor{cite}{rgb}{0.30,0.60,1.00}
\definecolor{url}{rgb}{0.00,0.00,0.80}
\definecolor{link}{rgb}{0.90,0.10,0.20}
\DeclareSymbolFont{cyrletters}{OT2}{wncyr}{m}{n}
\DeclareMathSymbol{\Sha}{\mathalpha}{cyrletters}{"58}
\numberwithin{equation}{section}
\theoremstyle{plain}
\newtheorem{proposition}{Proposition}[section]
\newtheorem{conjecture}[proposition]{Conjecture}
\newtheorem{corollary}[proposition]{Corollary}
\newtheorem{lem}[proposition]{Lemma}
\newtheorem{theorem}[proposition]{Theorem}
\theoremstyle{definition}
\newtheorem{definition}[proposition]{Definition}
\newtheorem{construction}[proposition]{Construction}
\newtheorem{notation}[proposition]{Notation}
\theoremstyle{remark}
\newtheorem{remark}[proposition]{Remark}
\renewcommand{\b}[1]{\mathbf{#1}}
\renewcommand{\c}[1]{\mathcal{#1}}
\newcommand{\f}[1]{\mathfrak{#1}}
\renewcommand{\r}[1]{\mathrm{#1}}
\newcommand{\s}[1]{\mathscr{#1}}
\renewcommand{\(}{\left(}
\renewcommand{\)}{\right)}
\newcommand{\res}{\mathbin{|}}
\newcommand{\Sec}{\S}
\newcommand{\bC}{\b C}
\newcommand{\bG}{\b G}
\newcommand{\bP}{\b P}
\newcommand{\bQ}{\b Q}
\newcommand{\bR}{\b R}
\newcommand{\bZ}{\b Z}
\newcommand{\cC}{\c C}
\newcommand{\cD}{\c D}
\newcommand{\cI}{\c I}
\newcommand{\cV}{\c V}
\newcommand{\cX}{\c X}
\newcommand{\fX}{\f X}
\newcommand{\rH}{\r H}
\newcommand{\rI}{\r I}
\newcommand{\rJ}{\r J}
\newcommand{\rN}{\r N}
\newcommand{\rT}{\r T}
\newcommand{\ra}{\r a}
\newcommand{\rd}{\r d}
\newcommand{\rs}{\r s}
\newcommand{\sA}{\s A}
\newcommand{\sC}{\s C}
\newcommand{\sF}{\s F}
\newcommand{\sL}{\s L}
\newcommand{\sN}{\s N}
\newcommand{\sO}{\s O}
\newcommand{\sT}{\s T}
\newcommand{\an}{\r{an}}
\newcommand{\cl}{\r{cl}}
\newcommand{\cons}{\r{cons}}
\newcommand{\et}{\acute{\r{e}}\r{t}}
\newcommand{\trop}{\r{trop}}
\newcommand{\ur}{\r{ur}}
\DeclareMathOperator{\Char}{char}
\DeclareMathOperator{\DIV}{div}
\DeclareMathOperator{\Gal}{Gal}
\DeclareMathOperator{\IM}{im}
\DeclareMathOperator{\Ker}{ker}
\DeclareMathOperator{\Map}{Map}
\DeclareMathOperator{\ord}{ord}
\DeclareMathOperator{\Pic}{Pic}
\DeclareMathOperator{\Spec}{Spec}
\DeclareMathOperator{\Tr}{Tr}
\begin{document}

\title{Monodromy map for tropical Dolbeault cohomology}

\author{Yifeng Liu}
\address{Department of Mathematics, Northwestern University, Evanston IL 60208, United States}
\email{liuyf@math.northwestern.edu}

\date{\today}
\subjclass[2010]{14G22}

\begin{abstract}
  We define monodromy maps for tropical Dolbeault cohomology of algebraic varieties over non-Archimedean fields. We propose a conjecture of Hodge isomorphisms via monodromy maps, and provide some evidence.
\end{abstract}

\maketitle

\setcounter{tocdepth}{1}
\tableofcontents

\section{Introduction}
\label{ss:1}

Let $K$ be a complete non-Archimedean field. For an algebraic variety $X$ over $K$, one has the associated $K$-analytic space $X^\an$ in the sense of Berkovich \cite{Berk93}. Using a bicomplex $\sA^{\bullet,\bullet}_{X^\an}$ of $\bR$-sheaves of real forms on $X^\an$ concentrated in the first quadrant, constructed by Chambert-Loir and Ducros \cite{CLD12}, one can define, analogous to complex manifolds, Dolbeault cohomology groups $H^{p,q}_\trop(X)$. We call them \emph{tropical Dolbeault cohomology}. They are real vector spaces satisfying $H^{p,q}_\trop(X)\neq 0$ only when $0\leq p,q\leq\dim X$. In this article, we consider a natural question, motivated from complex geometry, about the Hodge isomorphism: Do we have $H^{p,q}_\trop(X)\simeq H^{q,p}_\trop(X)$?

However, the perspective we take is very different from complex geometry. In fact, we will construct for $p\geq 1$ and $q\geq 0$ a functorial map
\[\rN_X\colon H^{p,q}_\trop(X)\to H^{p-1,q+1}_\trop(X)\]
which we call the \emph{monodromy map}, via constructing a canonical map $\sA^{p,q}_{X^\an}\to\sA^{p-1,q+1}_{X^\an}$ on the level of sheaves. Such map does not exist in complex geometry. We conjecture (see Conjecture \ref{co:monodromy}) that over certain field $K$, the iterated map
\[\rN_X^{p-q}\colon H^{p,q}_\trop(X)\to H^{q,p}_\trop(X)\]
is an isomorphism for $p\geq q$ (assuming $X$ is complete and smooth). We view such isomorphism as the Hodge isomorphism for tropical Dolbeault cohomology. If this holds, then we have the following numerical relation for corresponding Hodge numbers:
\[h^{p,0}_\trop(X)\leq h^{p-1,1}_\trop(X)\leq \cdots \cdots \geq h^{1,p-1}_\trop(X)\geq h^{0,p}_\trop(X).\]
This is apparently new from complex geometry. In fact, we should compare the Hodge isomorphism and the above numerical relation with the monodromy-weight conjecture for the $E_2$ page of the weight spectral sequence (Conjecture \ref{co:purity}). We remark that the map $\rN_X^p\colon H^{p,0}_\trop(X)\to H^{0,p}_\trop(X)$ is simply induced by the ``flipping'' map $\rJ$ \cite{CLD12} multiplied by $p!$ (Lemma \ref{le:functorial} (1)).

The theorem below provides certain evidence toward our expected Hodge isomorphism. The proof uses arithmetic geometry, especially the weight spectral sequence. The method is restrictive in the sense that any further extension would seem to be relied on the monodromy-weight conjecture in the mixed characteristic case, among other difficulties. Therefore, it is very interesting and important to find a somewhat analytic proof of the results here. However, the situation is rather subtle since we know for some field $K$, such as the completed algebraic closure of $\bC(\!(t)\!)$, the map $\rN_X\colon H^{1,0}_\trop(X)\to H^{0,1}_\trop(X)$ may fail to be an isomorphism (see Remark \ref{re:counter}).\\

In this article, by a non-Archimedean field, we mean a complete topological field equipped with a nontrivial rank-$1$ non-Archimedean valuation. For a non-Archimedean field $K$, we denote by $K^\circ$ the ring of integers, $\widetilde{K}$ the residue field, $K^\ra$ a fixed algebraic closure with $\widehat{K^\ra}$ its completion.

\begin{theorem}
Let $X_0$ be a proper smooth scheme over a non-Archimedean field $K_0$. Let $K$ be a closed subfield of $\widehat{K_0^\ra}$ containing $K_0$. Put $X=X_0\otimes_{K_0}K$.
\begin{enumerate}
  \item (Corollary \ref{co:injective}) Suppose that $K_0$ is isomorphic to $k(\!(t)\!)$ for $k$ either a finite field or a field of characteristic zero. Then the monodromy map
      \[\rN_X^p\colon H^{p,0}_\trop(X)\to H^{0,p}_\trop(X)\]
      is injective for every $p\geq 0$. In particular, $H^{p,0}_\trop(X)$ is of finite dimension.

  \item (Theorem \ref{th:isomorphic}) Suppose that $K_0$ is either a finite extension of $\bQ_p$ or isomorphic to $k(\!(t)\!)$ for $k$ a finite field, $K=\widehat{K_0^\ra}$, and $X_0$ admits a proper strictly semistable model over $K_0^\circ$. Then the monodromy map
      \[\rN_X\colon H^{1,0}_\trop(X)\to H^{0,1}_\trop(X)\]
      is an isomorphism.
\end{enumerate}
\end{theorem}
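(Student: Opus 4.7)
The plan is to connect tropical Dolbeault cohomology with the weight filtration on $\ell$-adic (equal characteristic) or $p$-adic (mixed characteristic) \'etale cohomology of $X$ via a proper strictly semistable integral model, and to identify the tropical monodromy map $\rN_X$ with Grothendieck's monodromy operator $N$. For part (2) such a model $\cX_0$ over $K_0^\circ$ is given; for part (1) I would first apply de Jong's alteration theorem (possibly after a finite extension of $K_0$) to reduce to this setting, then use the functoriality of $H^{p,q}_\trop$ and of $\rN_X$ together with a trace construction over the alteration to transport the injectivity of $\rN_X^p$ back to $X_0$.

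Given $\cX_0$, the central input is the Rapoport--Zink--Steenbrink weight spectral sequence for $H^p_\et(X_{\widehat{K_0^\ra}}, \bQ_\ell)$ (or its $p$-adic analogue), whose $E_1$-page is expressed via the cohomology of intersections of components of the special fibre and whose graded pieces $\Gr^W_\ast$ admit a combinatorial description in terms of the dual complex $\Sigma_{\cX_0}$. I would identify $H^{p,0}_\trop(X)\otimes_\bR\bC$ and $H^{0,p}_\trop(X)\otimes_\bR\bC$ with the two extreme weight-graded pieces of $H^p_\et\otimes_{\bQ_\ell}\bC$ (one of them being $H^p(\Sigma_{\cX_0},\bC)$), using the Chambert-Loir--Ducros description of forms supported on the Berkovich skeleton together with the standard descriptions of the top and bottom weight pieces of $H^p_\et$. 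One then has to verify that the flip map $\rJ$, hence $\rN_X^p = p!\cdot\rJ$ at the $(p,0)$-level, corresponds to Grothendieck's $N^p$ under this identification; I would check this on the $E_1$-page, where both maps arise from the same signed incidence sum on $\Sigma_{\cX_0}$.

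To conclude part (1), the injectivity of $N^p$ between the two extreme weight-graded pieces of $H^p_\et$ is a consequence of the monodromy--weight conjecture: for $K_0\simeq k(\!(t)\!)$ with $k$ finite and $\ell\neq\Char k$ this is Deligne's theorem from Weil II, and the case $\Char k=0$ reduces to the finite residue field case by a standard spreading-out-and-specialisation argument applied to a model of $\cX_0$ over a finitely generated subring of $K_0^\circ$; the finite dimensionality of $H^{p,0}_\trop(X)$ is then automatic from its identification with a single $\ell$-adic graded piece. For part (2) only the monodromy--weight conjecture for $H^1$ is required, and this is known unconditionally in both situations under consideration (via the analysis of the N\'eron model of the Albanese variety of $X_0$ and its toric part), yielding that $\rN_X\colon H^{1,0}_\trop(X)\to H^{0,1}_\trop(X)$ is an isomorphism. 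The principal obstacle I anticipate lies in the weight-piece identification together with the monodromy compatibility of the previous paragraph: the tropical side is defined purely analytically on the Chambert-Loir--Ducros bicomplex while the \'etale/weight side is Galois-theoretic, so matching them at the level of graded pieces (not merely of dimensions) requires a delicate cycle-class-type map from tropical forms into $\ell$-adic cohomology and a proof that this map intertwines $\rJ$ with Grothendieck's $N$.
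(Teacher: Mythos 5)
Your overall strategy --- relate $H^{p,q}_\trop$ to the weight spectral sequence of a semistable alteration and match $\rN_X$ with the cohomological monodromy operator --- is indeed the one the paper follows, but the step on which your argument rests, namely the identification of $H^{p,0}_\trop(X)\otimes_\bR\bC$ with the extreme weight-graded piece of $H^p_\et$, is false, and this is not a technicality. What is true (and what the paper proves, via the map $\ord$ of Lemma \ref{le:e2}, which records the orders of vanishing along the strata $Y^{(p)}$ of the invertible functions $f_{lk}$ occurring in a presentation of a form) is only an \emph{injection} of the rational lattice $H^0(X^\an,\sT^p_{X^\an})$ into $E_2^{-p,2p}(p)$. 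It is not surjective in general: Remark \ref{re:counter} exhibits a curve over the completion of $\ol{\bC(\!(t)\!)}$ with $h^{1,0}_\trop(X)=0$ and $h^{0,1}_\trop(X)=2$, whereas weight-monodromy for $H^1$ holds in equal characteristic, so the two extreme weight pieces there both have dimension $2$; your identification would force $h^{1,0}_\trop(X)=2$ and would ``prove'' that $\rN_X$ is an isomorphism in the setting of part (1), contradicting this example. The obstruction to surjectivity is that a cohomologically trivial divisor supported on $Y^{[1]}\cap Y^i$ need not be principal on $Y^i$ when the residue field is large. This is exactly why part (1) asserts only injectivity; the paper obtains it by combining the injection $\ord$, the injectivity of $\rN^p_\cX$ on $E_2$ (Ito's equal-characteristic weight-monodromy theorem), the injectivity of the comparison map $\kappa^p_X$ into $E_2^{p,0}$, and --- the genuinely hard step, which you correctly flag but do not supply --- the commutativity of the square \eqref{eq:rational} intertwining $\rN_X^p=p!\cdot\rJ$ with $\rN_\cX^p$. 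That commutativity is not a formal signed-incidence comparison on the $E_1$-page of the dual complex; in the paper it is an explicit \v{C}ech--Dolbeault computation (Lemma \ref{le:rational}) resting on the star-shaped integration identities of Proposition \ref{pr:star}, and it is needed precisely because the $E_2$-terms of a general semistable alteration are not determined by the dual complex alone.

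For part (2) the same gap propagates: weight-monodromy for $H^1$ (Proposition \ref{pr:purity} (3)) again yields only injectivity of $\rN_X$. Surjectivity requires showing that every class in $E_2^{-1,2}(1)\cap H^0_{\et}(Y^{(1)}_\rs,\bQ)$ is realized by a global tropical $(1,0)$-form, and the paper's construction of the map $\rT_\cX$ in Theorem \ref{th:isomorphic} uses the finiteness of the residue field in an essential way: a cohomologically trivial divisor class on a component $Y^i$ lies, after clearing denominators, in $\Pic^0_{Y^i/\widetilde{K}}(\widetilde{K})$, which is a torsion group because $\widetilde{K}$ is finite, so a further multiple is principal and the resulting rational functions lift to the invertible analytic functions needed to build the form. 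No analogue of this step appears in your outline, and without it the passage from ``the cohomological $N$ is an isomorphism on $E_2$'' to ``$\rN_X$ is surjective'' does not go through.
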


\begin{remark}\label{re:jell}
Let $K$ be an algebraically closed non-Archimedean field.
\begin{enumerate}
  \item In his thesis, Jell proved that for a proper smooth scheme $X$ over $K$ of dimension $n$, the map $\rN_X^p\colon H^{p,0}_\trop(X)\to H^{0,p}_\trop(X)$ is injective for $p=0,1,n$ \cite{Jel}*{Proposition 3.4.11}.

  \item In \cite{JW16}, Jell and Wanner proved that for $X$ either $\bP^1_K$ or a (proper smooth) Mumford curve over $K$, the map $\rN_X\colon H^{1,0}_\trop(X)\to H^{0,1}_\trop(X)$ is an isomorphism.
\end{enumerate}
Their methods in both papers are analytic.
\end{remark}

In \cite{MZ13}, Mikhalkin and Zharkov study the tropical homology groups $H_{p,q}(X)$ of a \emph{tropical space} $X$. Assuming $X$ compact, they define a map
\[\phi\cap\colon H_{p,q}(X)\otimes\bR\to H_{p+1,q-1}(X)\otimes\bR\]
using their eigenwave $\phi$. In fact, as shown in \cite{JSS15}, one can compute the tropical homology $H_{p,q}(X)\otimes\bR$, or rather the tropical cohomology, via superforms in \cite{CLD12} as well. Then our construction in \Sec\ref{ss:2} would give rise to a map $\rN_X\colon H_{p,q}(X)\otimes\bR\to H_{p+1,q-1}(X)\otimes\bR$ (or rather on cohomology) for any tropical space $X$. We expect that $\rN_X$ and $\phi\cap$ should coincide when $X$ is compact, possibly up to an elementary factor.

Moreover, in \cite{MZ13}, the authors prove that when $X$ is a realizable smooth compact tropical space, the iterated map $\phi^{q-p}\cap\colon H_{p,q}(X)\otimes\bR\to H_{q,p}(X)\otimes\bR$ is an isomorphism. They first realize $X$ as the tropical limit of a complex projective one-parameter semistable degeneration $\cX$ such that all strata of the singular fiber $\cX_0$ are blow-ups of projective spaces. By the work \cite{IKMZ16}, one knows that $H_{p,q}(X)\otimes\bQ$ can be identified with $E_2$-terms of the Steenbrink--Illusie spectral sequence associated to $\cX$. Moreover, Mikhalkin and Zharkov show that under such identification, the map $\phi\cap$ is simply the monodromy map on $E_2$-terms. In our work, $X$ is a proper smooth\footnote{It is worth noting that the tropicalization of a smooth analytic space, such as $X^\an$, under a local tropical chart is in general not a smooth tropical space.} \emph{algebraic variety} and we relate our tropical Dolbeault cohomology $H^{p,q}_\trop(X)$ to $E_2$-terms of the weight spectral sequence of (various) semistable alteration $\cX$ of $X$ (see Definition \ref{de:semistable_alteration} for the precise meaning) for certain $p,q$, under which $\rN_X$ is essentially the monodromy map. In our setup, the semistable scheme $\cX$ is very general, so that its $E_2$-terms cannot be read off solely from the dual complex of $\cX$, hence we do not obtain a strict identification for general $p,q$. Nevertheless, it would be interesting to compare our approach relating to the weight spectral sequence and their approach relating to the Steenbrink--Illusie spectral sequence in \cite{MZ13,IKMZ16}.

\subsection*{Convention}

For an analytic space $X$ over $K$, we use $H^\bullet(X,-)$ to indicate the cohomology group with respect to the underlying topology of $X$, and $H^\bullet_{\et}(X,-)$ to indicate the cohomology group with respect to the \'{e}tale topology of $X$.

\subsection*{Acknowledgements}

The author would like to thank Walter~Gubler, Philipp~Jell, and Klaus~K\"{u}nnemann for helpful discussions and their hospitality during his visit at Universit\"{a}t Regensburg. He would also like to thank Erwan~Brugall\'{e} and Philipp~Jell for drawing his attention to the work of Mikhalkin and Zharkov \cite{MZ13}.

\section{Monodromy maps for superforms on vector spaces}
\label{ss:2}

In this section, we review the construction of superforms on vector spaces from \cite{CLD12} and introduce the corresponding monodromy map.

Let $V$ be an $\bR$-vector space of dimension $n$. Let $T_V$ be the tangent space of $V$ and $T_V^*$ its dual space. For every open subset $U$ of $V$ and integers $p,q\geq 0$, we have the space of $(p,q)$-forms on $U$,
\[
\sA_V^{p,q}(U)\coloneqq\sA_V(U)\otimes_\bR\wedge^pT_V^*\otimes_\bR\wedge^qT_V^*,
\]
where $\sA_V(U)$ is the $\bR$-algebra of smooth functions on $U$. The direct sum $\sA_V^{*,*}(U)\coloneqq\bigoplus\sA_V^{p,q}(U)$ form a bigraded $\bR$-algebra with the following commutativity law: if $\omega$ is a $(p,q)$-form and $\omega'$ is a $(p',q')$-form, then
\[\omega'\wedge\omega=(-1)^{(p+q)(p'+q')}\omega\wedge\omega'.\]
Moreover, $\wedge\colon\sA_V^{p,0}(U)\times\sA_V^{0,q}(U)\to\sA_V^{p,q}(U)$ is simply the one induced by the tensor product map.

The real vector spaces $\sA_V^{\bullet,\bullet}(U)$ form a bicomplex with two differentials
\[
\rd'\colon \sA_V^{p,q}(U)\to \sA_V^{p+1,q}(U),\qquad
\rd''\colon \sA_V^{p,q}(U)\to \sA_V^{p,q+1}(U).
\]
We also have a convolution map $\rJ\colon \sA_V^{p,q}(U)\to \sA_V^{q,p}(U)$.

In terms of coordinates, they are described as follows. Let $\{x_1,\dots,x_n\}$ be a system of coordinates of $V$. Then a $(p,q)$-form can be written as
\[\omega=\sum_{|I|=p,|J|=q}\omega_{I,J}(x)\rd'x_I\otimes\rd''x_J=\sum_{|I|=p,|J|=q}\omega_{I,J}(x)\rd'x_I\wedge\rd''x_J\]
where $I,J$ are subsets of $\{1,\dots,n\}$, and $\omega_{I,J}(x)$ are smooth functions on $U$. For such $\omega$, we have
\begin{align*}
\rd'\omega &=\sum_{|I|=p,|J|=q}\sum_{i=1}^n\frac{\partial\omega_{I,J}(x)}{\partial x_i}\rd'x_i\wedge\rd'x_I\wedge\rd''x_J, \\
\rd''\omega &=(-1)^p\sum_{|I|=p,|J|=q}\sum_{j=1}^n\frac{\partial\omega_{I,J}(x)}{\partial x_j}\rd'x_I\wedge\rd''x_j\wedge\rd''x_J, \\
\rJ\omega &= (-1)^{pq}\sum_{|I|=p,|J|=q}\omega_{I,J}(x)\rd'x_J\wedge\rd''x_I.
\end{align*}
We remind readers from \cite{CLD12}*{\Sec 1.2} the relations
\[\rd''=\rJ\rd'\rJ,\quad \rJ\rd'=\rd''\rJ,\quad \rJ\rd''=\rd'\rJ,\quad \rd'=\rJ\rd''\rJ,\]
and for a $(p,q)$-form $\omega$ and a $(p',q')$-form $\omega'$,
\begin{align*}
\rd'(\omega\wedge\omega')&=\rd'\omega\wedge\omega'+(-1)^{p+q}\omega\wedge\rd'\omega',\\
\rd''(\omega\wedge\omega')&=\rd''\omega\wedge\omega'+(-1)^{p+q}\omega\wedge\rd''\omega'.
\end{align*}

Now we are going to define a map $\rN\colon\sA_V^{p,q}(U)\to\sA_V^{p-1,q+1}(U)$ for $p\geq 1$. We first recall the notion of coevaluation map. Let $W$ be an arbitrary finite dimensional $\bR$-vector space with $W^*$ its dual space. We have a canonical evaluation map
\[\r{ev}\colon W^*\otimes_\bR W\to\bR.\]
We define the \emph{coevaluation map} to be the unique linear map
\[\r{coev}\colon \bR\to W\otimes_\bR W^*\]
such that both composite maps
\begin{align*}
W^*\xrightarrow{1_{W^*}\otimes\r{coev}}W^*\otimes_\bR(W\otimes_\bR W^*)
&\xrightarrow{\sim}(W^*\otimes_\bR W)\otimes_\bR W^*\xrightarrow{\r{ev}\otimes1_{W*}}W^*\\
W\xrightarrow{\r{coev}\otimes1_W}(W\otimes_\bR W^*)\otimes_\bR W
&\xrightarrow{\sim}W\otimes_\bR(W^*\otimes_\bR W)\xrightarrow{1_W\otimes\r{ev}}W
\end{align*}
are identity maps. Now we apply the coevaluation map to the vector space $W=T_V$.

\begin{definition}\label{de:monodromy_local}
Let $p\geq 1$ be an integer. Define the map
\[\rN\colon\sA_V^{p,q}(U)\to \sA_V^{p-1,q+1}(U)\]
to be the composite map
\begin{align*}
\sC^\infty(U)\otimes_\bR\wedge^pT_V^*\otimes_\bR\wedge^qT_V^*
&\xrightarrow{\sim}\sC^\infty(U)\otimes_\bR\wedge^pT_V^*\otimes_\bR\bR\otimes_\bR\wedge^qT_V^*\\
&\to\sC^\infty(U)\otimes_\bR\wedge^pT_V^*\otimes_\bR(T_V\otimes_\bR T_V^*)\otimes_\bR\wedge^qT_V^*\\
&\xrightarrow{\sim}\sC^\infty(U)\otimes_\bR(\wedge^pT_V^*\otimes_\bR T_V)\otimes_\bR(T_V^*\otimes_\bR\wedge^qT_V^*)\\
&\to\sC^\infty(U)\otimes_\bR\wedge^{p-1}T_V^*\otimes_\bR\wedge^{q+1}T_V^*,
\end{align*}
where the second map is given by the coevaluation map for $T_V$, and the last map is given by the contraction map and the wedge product.

For $0\leq r\leq p$, we denote by $\rN^r\colon\sA_V^{p,q}(U)\to\sA_V^{p-r,q+r}(U)$ the consecutive composition.
\end{definition}

In terms of the coordinates $\{x_1,\dots,x_n\}$ of $V$, we have for
\begin{align}\label{eq:omega}
\omega=\sum_{I=\{i_1<\cdots<i_p\},J=\{j_1<\cdots<j_q\}}
\omega_{I,J}(x)\rd'x_{i_1}\wedge\cdots\wedge\rd'x_{i_p}\wedge\rd''x_{j_1}\wedge\cdots\wedge\rd''x_{j_q}
\end{align}
with $p\geq 1$ that
\begin{align*}
\rN\omega
&=\sum_{k=1}^p\sum_{I,J}
(-1)^{p-k}\omega_{I,J}(x)\rd'x_{i_1}\wedge\cdots\wedge\widehat{\rd'x_{i_k}}\wedge\cdots\wedge\rd'x_{i_p}
\wedge\rd''x_{i_k}\wedge\rd''x_{j_1}\wedge\cdots\wedge\rd''x_{j_q}\\
&=\sum_{k=1}^p\sum_{I,J}
(-1)^{p-k}\omega_{I,J}(x)\rd'x_{I\setminus\{i_k\}}\wedge\rd''x_{i_k}\wedge\rd''x_J.
\end{align*}

\begin{lem}\label{le:differential_local}
We have $\rN\rd''=\rd''\rN\colon\sA_V^{p,q}(U)\to\sA_V^{p-1,q+2}(U)$ for $p\geq 1$.
\end{lem}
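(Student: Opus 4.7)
The plan is to verify the identity directly from the coordinate formulas given above. The conceptual reason it should hold is that $\rN$ is $\sC^\infty(U)$-linear: it differentiates nothing and acts only on the $\bigwedge^{\bullet}T_V^*\otimes_\bR\bigwedge^{\bullet}T_V^*$ tensor factor. The operator $\rd''$, by contrast, differentiates the coefficient $\omega_{I,J}(x)$ and appends a new factor $\rd''x_j$ on the left of the $\rd''$-string. Thus the two operations touch essentially disjoint parts of a monomial $\omega_{I,J}(x)\,\rd'x_I\wedge\rd''x_J$, and the commutation reduces to a single sign computation.

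Concretely, I would take $\omega$ as in \eqref{eq:omega} and apply the explicit coordinate formula for $\rN$ to the $(p,q+1)$-form $\rd''\omega$ to obtain
\[
\rN\rd''\omega=(-1)^{p}\sum_{k,I,J,j}(-1)^{p-k}\frac{\partial\omega_{I,J}}{\partial x_j}\,
\rd'x_{I\setminus\{i_k\}}\wedge\rd''x_{i_k}\wedge\rd''x_j\wedge\rd''x_J,
\]
while applying $\rd''$ to the $(p-1,q+1)$-form $\rN\omega$ yields
\[
\rd''\rN\omega=(-1)^{p-1}\sum_{k,I,J,j}(-1)^{p-k}\frac{\partial\omega_{I,J}}{\partial x_j}\,
\rd'x_{I\setminus\{i_k\}}\wedge\rd''x_j\wedge\rd''x_{i_k}\wedge\rd''x_J.
\]
Transposing the two adjacent factors $\rd''x_j$ and $\rd''x_{i_k}$ in the second display contributes a factor of $-1$, converting $(-1)^{p-1}$ into $(-1)^p$ and matching the two expressions term by term.

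There is no real obstacle here; the only point demanding attention is the sign bookkeeping. The sign $(-1)^p$ in the formula for $\rd''$ on a $(p,q)$-form arises from passing the new $\rd''x_j$ across the $p$-factor $\rd'$-string, whereas $\rN$ inserts the new $\rd''x_{i_k}$ immediately to the left of the already-existing $\rd''$-string. The discrepancy between these two insertion positions is precisely the single transposition above, producing the compensating sign. A coordinate-free reformulation, should one prefer it, is that $\rN$ is induced by a $\sC^\infty(U)$-linear endomorphism of $\bigwedge^{\bullet}T_V^*\otimes_\bR\bigwedge^{\bullet}T_V^*$ that is natural in the right tensor factor, while the non-differentiation part of $\rd''$ is the left wedge-multiplication on that same factor by $\rd''x_j$; their commutation is automatic up to the single transposition displayed above.
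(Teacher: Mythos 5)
Your proposal is correct and is essentially the paper's own proof: both compute $\rN\rd''\omega$ and $\rd''\rN\omega$ in coordinates from the displayed formulas and observe that they agree after the single transposition of $\rd''x_j$ and $\rd''x_{i_k}$, which turns $(-1)^{p-1}$ into $(-1)^{p}$. The sign bookkeeping in your two displays matches the paper's term by term, and your remark that $\rN$ acts only on the tensor factor (so it applies verbatim to the unsorted string $\rd''x_j\wedge\rd''x_J$ by linearity) is the only point needing care, which you have addressed.
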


\begin{proof}
Take a $(p,q)$-form $\omega$ as \eqref{eq:omega}. We have
\begin{align*}
\rd''\rN\omega&=\rd''\sum_{k=1}^p\sum_{I,J}(-1)^{p-k}
\omega_{I,J}(x)\rd'x_{I\setminus\{i_k\}}\wedge\rd''x_{i_k}\wedge\rd''x_J\\
&=-\sum_{I,J}\sum_{j=1}^n\sum_{k=1}^p\frac{(-1)^k\partial\omega_{I,J}(x)}{\partial x_j}
\rd'x_{I\setminus\{i_k\}}\wedge\rd''x_j\wedge\rd''x_{i_k}\wedge\rd''x_J.
\end{align*}
On the other hand, we have
\begin{align*}
\rN\rd''\omega&=\rN(-1)^p\sum_{I,J}\sum_{j=1}^n\frac{\partial\omega_{I,J}(x)}{\partial x_j}
\rd'x_I\wedge\rd''x_j\wedge\rd''x_J\\
&=\sum_{k=1}^p\sum_{I,J}\sum_{j=1}^n\frac{(-1)^k\partial\omega_{I,J}(x)}{\partial x_j}
\rd'x_{I\setminus\{i_k\}}\wedge\rd''x_{i_k}\wedge\rd''x_j\wedge\rd''x_J\\
&=-\sum_{I,J}\sum_{j=1}^n\sum_{k=1}^p\frac{(-1)^k\partial\omega_{I,J}(x)}{\partial x_j}
\rd'x_{I\setminus\{i_k\}}\wedge\rd''x_j\wedge\rd''x_{i_k}\wedge\rd''x_J.
\end{align*}
The lemma follows.
\end{proof}

\begin{lem}\label{le:wedge_local}
For a $(p,q)$-form $\omega$ and a $(p',q')$-form $\omega'$ with $p,p'\geq 1$ and $p+p'\geq n+1$, we have
\[\rN\omega\wedge\omega'=-\omega\wedge\rN\omega'.\]
\end{lem}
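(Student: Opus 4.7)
The key observation is that the hypothesis $p+p'\geq n+1$ is exactly the condition forcing $\omega\wedge\omega'=0$, since the $\rd'$-component of that wedge product lies in $\wedge^{p+p'}T_V^*=0$. This strongly suggests that the identity in the lemma is really a special case of a graded Leibniz rule for $\rN$. The plan is therefore to establish
\begin{equation*}
\rN(\omega\wedge\omega')=\rN\omega\wedge\omega'+\omega\wedge\rN\omega'
\end{equation*}
as an identity for arbitrary $(p,q)$- and $(p',q')$-forms $\omega,\omega'$ with $p,p'\geq 1$ (and even extending to $p=0$ or $p'=0$ with the convention $\rN=0$ there), and then apply it in the situation of the lemma, where the left-hand side vanishes, to conclude immediately that $\rN\omega\wedge\omega'=-\omega\wedge\rN\omega'$.

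To prove the Leibniz rule I would reduce by $\sA_V(U)$-bilinearity to monomials $\omega=\rd'x_I\wedge\rd''x_J$ and $\omega'=\rd'x_{I'}\wedge\rd''x_{J'}$. Writing $\omega\wedge\omega'=(-1)^{p'q}\rd'x_I\wedge\rd'x_{I'}\wedge\rd''x_J\wedge\rd''x_{J'}$ via the bigraded commutativity law and then applying the coordinate formula for $\rN$, the resulting sum over positions $k$ in the concatenated index sequence splits naturally into a contribution from positions originally lying in $I$ and one from positions originally lying in $I'$. After shuttling the new $\rd''x_i$ factor back past the $\rd'x_{I'}$ (resp.\ $\rd''x_J$) block using the commutativity law, these two halves should reassemble to $\rN\omega\wedge\omega'$ and $\omega\wedge\rN\omega'$, respectively.

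The main obstacle is the sign bookkeeping, which combines three inputs: the internal sign $(-1)^{p-k}$ in the definition of $\rN$ (which corresponds to right, rather than left, contraction of $\wedge^\bullet T_V^*$), the bigraded commutativity sign $(-1)^{(p+q)(p'+q')}$ for wedging superforms, and the graded Leibniz rule for the contraction operator on an exterior algebra. Once these are reconciled, both pieces emerge with coefficient $+1$, yielding the additive Leibniz identity; the lemma follows at once. As a sanity check one can alternatively bypass the Leibniz detour and argue in coordinates: the case $p+p'\geq n+2$ is vacuous (both sides are $(p+p'-1,\cdot)$-forms with $p+p'-1>n$), while in the borderline case $p+p'=n+1$ only a single term survives on each side, parameterized by the unique element of $I\cap I'$, and a direct sign comparison confirms that the two resulting expressions differ by $-1$.
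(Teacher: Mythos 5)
Your proposal is correct, and the main route you take is genuinely different from the paper's. The paper argues directly: it reduces to monomials $\omega=\omega(x)\rd'x_I\wedge\rd''x_J$, $\omega'=\omega'(x)\rd'x_{I'}\wedge\rd''x_{J'}$, notes that $|I\cap I'|\geq p+p'-n\geq 1$, disposes of the case $|I\cap I'|\geq 2$ (both sides vanish), and in the remaining case $|I\cap I'|=\{n\}$ computes both sides and compares signs --- essentially your ``sanity check'' argument, except that you should be aware that even when $p+p'=n+1$ one can have $|I\cap I'|\geq 2$, so the phrase ``the unique element of $I\cap I'$'' only applies after that degenerate case is discarded. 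Your primary route via the Leibniz identity $\rN(\omega\wedge\omega')=\rN\omega\wedge\omega'+\omega\wedge\rN\omega'$ is valid and, in my view, more illuminating: it explains why the hypothesis $p+p'\geq n+1$ enters (it is exactly the condition $\wedge^{p+p'}T_V^*=0$ forcing the left-hand side to vanish) and it yields a statement true without any dimension restriction. The sign bookkeeping you defer does work out: writing $\omega=\alpha\wedge\beta$ with $\alpha$ of bidegree $(p,0)$ and $\beta$ of bidegree $(0,q)$, the coefficient $(-1)^{p-k}$ in the coordinate formula for $\rN$ says that the first tensor factor is contracted \emph{from the right}, and right contraction satisfies $\iota^R_v(\alpha\wedge\alpha')=(-1)^{p'}\,\iota^R_v\alpha\wedge\alpha'+\alpha\wedge\iota^R_v\alpha'$; combining this with the Koszul signs $(-1)^{p'q}$ for commuting the $(0,1)$-factor $\rd''x_i$ and the block $\beta$ past $\alpha'$, both terms indeed come out with coefficient $+1$. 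What the paper's computation buys is brevity and self-containedness; what yours buys is a structural identity that subsumes the lemma and would remain useful elsewhere.
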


\begin{proof}
By linearity, we may assume that
\[\omega=\omega(x)\rd'x_I\wedge\rd''x_J,\qquad\omega'=\omega'(x)\rd'x_{I'}\wedge\rd''x_{J'}.\]
If $|I\cap I'|\geq 2$, then it is easy to see that $\rN\omega\wedge\omega'=\omega\wedge\rN\omega'=0$. Otherwise, $|I\cap I'|=1$. Without lost of generality, we may assume that $I\cap I'=\{n\}$ hence $\rd'x_I=\rd'x_{I\setminus\{n\}}\wedge\rd'x_n$ and $\rd'x_{I'}=\rd'x_{I'\setminus\{n\}}\wedge\rd'x_n$. Then we have
\begin{align*}
\rN\omega\wedge\omega'&=(\omega(x)\rd'x_{I\setminus\{n\}}\wedge\rd''x_n\wedge\rd''x_J)\wedge(\omega'(x)\rd'x_{I'}\wedge\rd''x_{J'})\\
&=(-1)^{p'(q+1)}\omega(x)\omega'(x)\rd'x_{I\setminus\{n\}}\wedge\rd'x_{I'}\wedge\rd''x_n\wedge\rd''x_J\wedge\rd''x_{J'}\\
&=(-1)^{p'(q+1)}
\omega(x)\omega'(x)\rd'x_{I\setminus\{n\}}\wedge\rd'x_{I'\setminus\{n\}}\wedge\rd'x_n\wedge\rd''x_n\wedge\rd''x_J\wedge\rd''x_{J'};
\end{align*}
and
\begin{align*}
\omega\wedge\rN\omega'&=(\omega(x)\rd'x_I\wedge\rd''x_J)\wedge(\omega'(x)\rd'x_{I'\setminus\{n\}}\wedge\rd''x_n\wedge\rd''x_{J'})\\
&=(-1)^{p'q}\omega(x)\omega'(x)\rd'x_I\wedge\rd'x_{I'\setminus\{n\}}\wedge\rd''x_n\wedge\rd''x_J\wedge\rd''x_{J'}\\
&=(-1)^{p'q}\omega(x)\omega'(x)\rd'x_{I\setminus\{n\}}\wedge\rd'x_n\wedge\rd'x_{I'\setminus\{n\}}\wedge\rd''x_n\wedge\rd''x_J\wedge\rd''x_{J'}\\
&=(-1)^{p'q+(p'-1)}
\omega(x)\omega'(x)\rd'x_{I\setminus\{n\}}\wedge\rd'x_{I'\setminus\{n\}}\wedge\rd'x_n\wedge\rd''x_n\wedge\rd''x_J\wedge\rd''x_{J'}.
\end{align*}
The lemma follows.
\end{proof}

\begin{lem}\label{le:functorial_local}
Let $V'$ be another $\bR$-vector space of dimension $n'$ and $U'\subset V'$ an subset. Let $\varphi\colon V'\to V$ be an affine map such that $\varphi(U')\subset U$. Then for $p\geq 1$, we have
\[\rN'\varphi^*=\varphi^*\rN\colon\sA_V^{p,q}(U)\to\sA_{V'}^{p-1,q+1}(U')\]
where $\rN'$ denotes the monodromy map for $V'$.
\end{lem}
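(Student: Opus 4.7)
The plan is to isolate the purely multilinear part of $\rN$ and derive the identity from a ``mixed'' naturality property of the coevaluation map. I would begin by writing $\sA_V^{p,q}(U)=\sA_V(U)\otimes_\bR(\wedge^pT_V^*\otimes_\bR\wedge^qT_V^*)$, so that $\rN=\r{id}\otimes\nu_{T_V}$, where $\nu_W\colon\wedge^pW^*\otimes\wedge^qW^*\to\wedge^{p-1}W^*\otimes\wedge^{q+1}W^*$ is the operation from Definition \ref{de:monodromy_local} with the smooth-function factor stripped off, built from $\r{coev}_W\colon\bR\to W\otimes W^*$, interior product, and exterior product. Since $\varphi$ is affine, its derivative $L\coloneqq\rd\varphi\colon T_{V'}\to T_V$ is a well-defined \emph{constant} linear map, and $\varphi^*$ likewise decomposes as $f\mapsto f\circ\varphi$ on the scalar factor tensored with $\wedge^pL^*\otimes\wedge^qL^*$ on the multilinear factor. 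Since the two decompositions act on independent tensor factors, the lemma would reduce to the single identity
\[(\wedge^{p-1}L^*\otimes\wedge^{q+1}L^*)\circ\nu_{T_V}=\nu_{T_{V'}}\circ(\wedge^pL^*\otimes\wedge^qL^*).\]

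Next I would derive this naturality from the naturality of each constituent. Interior product is natural in the mixed sense that $\iota_v(L^*\alpha)=L^*(\iota_{Lv}\alpha)$ for every $v\in T_{V'}$ and $\alpha\in\wedge^pT_V^*$, and exterior product commutes with $L^*$ in the obvious way. The one genuinely new ingredient is the identity
\[(\r{id}_{T_V}\otimes L^*)\circ\r{coev}_{T_V}=(L\otimes\r{id}_{T_{V'}^*})\circ\r{coev}_{T_{V'}}\]
as maps $\bR\to T_V\otimes T_{V'}^*$. I would verify this by picking dual bases $\{e_i\},\{e_i^*\}$ of $T_V,T_V^*$ and $\{e'_j\},\{{e'_j}^*\}$ of $T_{V'},T_{V'}^*$ and observing that both sides equal $\sum_{i,j}a_{ij}\,e_i\otimes{e'_j}^*$, where $L(e'_j)=\sum_i a_{ij}e_i$. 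Splicing these three naturality statements into the definition of $\nu$ yields the displayed identity, and hence the lemma.

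The only real conceptual subtlety, and what I'd view as the ``hard part,'' is that the coevaluation map is neither covariant nor contravariant in $W$ in isolation, so one cannot invoke a single functoriality statement; the correct substitute is precisely the mixed identity above, and it makes essential use of $\varphi$ being affine (so that the linear part $L$ is constant and commutes past the function factor). As a sanity check, the entire lemma can alternatively be verified in coordinates directly from the formula for $\rN$ preceding Lemma \ref{le:differential_local}, where after expanding $\varphi^*\rd'x_i=\sum_j a_{ij}\rd'y_j$ on both sides the identity reduces to a Laplace cofactor expansion of the $p\times p$ minors of $(a_{ij})$; this is straightforward but notationally heavy, so I would relegate it to a remark rather than take it as the main route.
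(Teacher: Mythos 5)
Your proposal is correct, and it takes a genuinely different route from the paper. The paper reduces to $\varphi$ linear, factors $\varphi$ as a composition of an injective and a surjective linear map, chooses adapted coordinates in each case, and verifies the identity by direct computation with the coordinate formula for $\rN$. You instead isolate the constant multilinear operator $\nu_W\colon\wedge^pW^*\otimes\wedge^qW^*\to\wedge^{p-1}W^*\otimes\wedge^{q+1}W^*$ and prove a single naturality statement for it with respect to the linear part $L$ of $\varphi$, resting on the mixed (di)naturality of coevaluation, $(\r{id}_{T_V}\otimes L^*)\circ\r{coev}_{T_V}=(L\otimes\r{id}_{T_{V'}^*})\circ\r{coev}_{T_{V'}}$, together with $\iota_v(L^*\alpha)=L^*(\iota_{Lv}\alpha)$ and the contravariance of the wedge product; in bases both sides of your naturality identity become $\sum_{i,j}a_{ij}\,L^*(\iota_{e_i}\alpha)\otimes({e'_j}^*\wedge L^*\beta)$, so the splice goes through. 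Your approach buys uniformity (no injective/surjective case split, and no choice of adapted basis) and makes transparent \emph{why} the lemma holds, namely that $\rN$ is built from canonically natural pieces; the paper's computation buys explicit formulas for $\varphi^*\rN\omega$ in the two special cases, which is harmless but not needed elsewhere. Your correct identification of the affineness of $\varphi$ as the point where the argument uses the hypothesis (so that $L$ is constant and passes through the function factor) matches the role this hypothesis plays in the paper's coordinate computation.
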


\begin{proof}
We may assume that $\varphi$ is a linear map. It suffices to consider cases where $\varphi$ is injective or surjective.

Suppose that $\varphi$ is injective. Regard $V'$ as a subspace of $V$. Choose a basis $\{x_1,\dots,x_n\}$ of $V$ such that $V'$ is spanned by $\{x_1,\dots,x_{n'}\}$ (with $n'\leq n$). Take $\omega=\omega(x)\rd'x_I\wedge\rd''x_J\in\sA_V^{p,q}(U)$ for $I=\{i_1<\cdots<i_p\}$. If $i_p>n'$, then $\rN'\varphi^*\omega=\varphi^*\rN\omega=0$. If $i_p\leq n'$, then we have
\begin{align*}
\rN'\varphi^*\omega=\varphi^*\rN\omega=\(\omega(x)\sum_{k=1}^p(-1)^{p-k}\rd'x_{I\setminus\{i_k\}}\wedge\rd''x_{i_k}\wedge\rd''x_J\)\res_{U'}.
\end{align*}

Suppose that $\varphi$ is surjective. Choose a basis $\{x_1,\dots,x_{n'}\}$ of $V'$ such that $\Ker\varphi$ is spanned by $\{x_{n+1},\dots,x_{n'}\}$ (with $n\leq n'$). We identify $V$ with the subspace of $V'$ spanned by $\{x_1,\dots,x_n\}$. Then again we have
\begin{align*}
\rN'\varphi^*\omega=\varphi^*\rN\omega=
\(\omega(\varphi(x))\sum_{k=1}^p(-1)^{p-k}\rd'x_{I\setminus\{i_k\}}\wedge\rd''x_{i_k}\wedge\rd''x_J\)\res_{U'}.
\end{align*}
The lemma follows.
\end{proof}

\begin{lem}\label{le:J_pre}
The map $\rN^p\colon\sA_V^{p,0}(U)\to\sA_V^{0,p}(U)$ coincides with $p!\cdot\rJ\colon\sA_V^{p,0}(U)\to\sA_V^{0,p}(U)$.
\end{lem}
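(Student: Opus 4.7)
The plan is to reduce the claim to a combinatorial identity in coordinates. Since both $\rN^p$ and $\rJ$ are $\sC^\infty(U)$-linear, it suffices to verify the equality on a monomial $\omega=\rd'x_{i_1}\wedge\cdots\wedge\rd'x_{i_p}$ with $i_1<\cdots<i_p$; by the permutation-invariance of the coordinate formula for $\rN$ (or equivalently by Lemma \ref{le:functorial_local} applied to a suitable linear automorphism of $V$), we may further assume $(i_1,\ldots,i_p)=(1,\ldots,p)$. Writing $X=\rd'x_1\wedge\cdots\wedge\rd'x_p$ and $Y=\rd''x_1\wedge\cdots\wedge\rd''x_p=\rJ X$, the goal becomes $\rN^p X=p!\,Y$.

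Next, I would unwind the coevaluation description of the monodromy to rewrite
\[
\rN=\sum_{i=1}^{n}\partial_i\otimes\epsilon_i,
\]
where $\partial_i$ is the right-contraction by $e_i$ on the $\rd'$-part (given by $\partial_i(\alpha_1\wedge\cdots\wedge\alpha_p)=\sum_{k}(-1)^{p-k}\alpha_k(e_i)\,\alpha_1\wedge\cdots\widehat{\alpha_k}\cdots\wedge\alpha_p$) and $\epsilon_i$ is left-wedging by $\rd''x_i$ on the $\rd''$-part. Because the two sides commute, iteration yields
\[
\rN^p X=\sum_{(i_1,\ldots,i_p)}\bigl(\partial_{i_1}\cdots\partial_{i_p}X\bigr)\,\rd''x_{i_1}\wedge\cdots\wedge\rd''x_{i_p},
\]
and both factors are nonzero only when $(i_1,\ldots,i_p)=(\sigma(1),\ldots,\sigma(p))$ for some $\sigma\in S_p$; in that case $\rd''x_{\sigma(1)}\wedge\cdots\wedge\rd''x_{\sigma(p)}=\r{sgn}(\sigma)\,Y$.

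The crux is then the identity $\partial_{\sigma(1)}\cdots\partial_{\sigma(p)}X=\r{sgn}(\sigma)$, which I would prove by induction on $p$. In the inductive step, setting $i=\sigma(p)$, the innermost contraction produces $\partial_i X=(-1)^{p-i}\rd'x_1\wedge\cdots\widehat{\rd'x_i}\cdots\wedge\rd'x_p$, and applying the remaining $\partial_{\sigma(j)}$ reduces the problem to the analogous iterated right-contraction of a $(p-1)$-form on the hyperplane with $x_i$ removed; the inductive hypothesis evaluates this as $\r{sgn}(\tau)$, where $\tau$ is the permutation of $\{1,\ldots,p-1\}$ obtained from $\sigma$ by deleting its last entry and order-preservingly relabeling the remaining values. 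The classical identity $\r{sgn}(\sigma)=(-1)^{p-i}\r{sgn}(\tau)$ then follows from an inversion count, since there are exactly $p-i$ pairs $(j,p)$ with $\sigma(j)>\sigma(p)=i$. Combining everything, $\rN^p X=\sum_{\sigma\in S_p}\r{sgn}(\sigma)^2\,Y=p!\,Y$.

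The main obstacle, and the only place that requires genuine care, is the bookkeeping of signs: the coordinate formula for $\rN$ uses the non-standard right-contraction sign $(-1)^{p-k}$ rather than the usual interior-product sign $(-1)^{k-1}$, and the factor $p!$ emerges precisely because the two occurrences of $\r{sgn}(\sigma)$ — one from reordering the $\rd''$-wedge into standard form, the other from the iterated contraction of $X$ — multiply to $\r{sgn}(\sigma)^2=1$ rather than cancel. Verifying that these two sign conventions really align is the substance that the inductive computation has to confirm.
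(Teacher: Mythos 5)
Your argument is correct: the paper's proof of this lemma is simply ``It is elementary,'' and your coordinate computation is exactly the kind of direct verification intended. The decomposition $\rN=\sum_i\partial_i\otimes\epsilon_i$, the reduction to $\partial_{\sigma(1)}\cdots\partial_{\sigma(p)}X=\operatorname{sgn}(\sigma)$, and the sign bookkeeping $(-1)^{p-i}\operatorname{sgn}(\tau)=\operatorname{sgn}(\sigma)$ all check out, yielding $\sum_{\sigma\in S_p}\operatorname{sgn}(\sigma)^2\,Y=p!\,Y$ as required.
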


\begin{proof}
It is elementary.
\end{proof}

\begin{remark}
The presheaf $U\mapsto\sA_V^{p,q}(U)$ is already a sheaf on $V$. The maps $\rd',\rd'',\rJ,\wedge,\rN$ induce maps of sheaves with same relations. In particular, we have the map
\[\rN_V\colon\sA_V^{p,q}\to \sA_V^{p-1,q+1}\]
of sheaves for $p\geq 1$, and corresponding Lemma \ref{le:differential_local} and Lemma \ref{le:wedge_local}. Lemma \ref{le:functorial_local} induces the equality
\[(\varphi_*\rN_{V'})\varphi^*=\varphi^*\rN_V\colon\sA_V^{p,q}\to\varphi_*\sA_{V'}^{p-1,q+1}\]
on the level of sheaves.
\end{remark}


Let $P$ be a polyhedral complex (\emph{polytope} in \cite{CLD12}*{\Sec 1.1}) in $V$ and $j\colon P\to V$ the tautological inclusion map. For every open subset $U$ of $P$, let $\sN^{p,q}(U)$ be the subspace of $(j^{-1}\sA_V^{p,q})(U)$ of $(p,q)$-forms $\omega$ such that for every polyhedron $C$ of $P$, the restriction of $\omega$ to $\langle C\rangle$ is zero on $\langle C\rangle\cap U$ where $\langle C\rangle$ is the affine subset of $V$ spanned by $C$. It is clear that $\sN^{p,q}$ is a $\bR$-subsheaf of $j^{-1}\sA_V^{p,q}$. Let $\sA_P^{p,q}$ be the quotient sheaf $j^{-1}\sA_V^{p,q}/\sN^{p,q}$.

By Lemma \ref{le:functorial_local}, the monodromy map $j^{-1}\rN_V\colon j^{-1}\sA_V^{p,q}\to j^{-1}\sA_V^{p,q}$ preserves $\sN^{p,q}$. Therefore, we have an induced map
\[\rN_P\colon\sA_P^{p,q}\to\sA_P^{p-1,q+1}\]
of sheaves for $p\geq 1$.

\section{Monodromy maps for real forms on analytic spaces}
\label{ss:3}

In this section, we review the construction of $(p,q)$-forms in \cite{CLD12}, tropical Dolbeault cohomology, and introduce the monodromy map on analytic spaces.

We fix a non-Archimedean field $K$. Let $X$ be a $K$-analytic space. Recall that a \emph{tropical chart} of $X$ is given by a moment map $f\colon X\to T$ to a torus $T$ over $K$ and a compact polyhedral complex $P$ of $T_\trop$ that contains $f_\trop(X)$. Here $T_\trop$ is the tropicalization of $T$, which is a $\bR$-vector space of finite dimension, and $f_\trop\colon X\to T\to T_\trop$ is the composite map.

For every open subset $U$ of $X$, denote by $\sA_{\r{pre}}^{p,q}(U)$ the inductive limit of $\sA^{p,q}_P(P)$ for all tropical charts $(f\colon U\to T,P)$ of $U$. Again by Lemma \ref{le:functorial_local}, the monodromy maps $\rN_P$ are compatible with transition maps. Therefore, we obtain a map $\rN_{\r{pre}}\colon\sA_{\r{pre}}^{p,q}(U)\to\sA_{\r{pre}}^{p-1,q+1}(U)$ for $p\geq1$. The sheaf of $(p,q)$-forms on $X$ is defined as the sheafification of $U\mapsto\sA_{\r{pre}}^{p,q}(U)$, denoted by $\sA_X^{p,q}$.

\begin{definition}\label{de:monodromy}
For $p\geq 1$, we define the \emph{monodromy map} for forms on $X$, denoted by
\[\rN_X\colon\sA_X^{p,q}\to\sA_X^{p-1,q+1}\]
to be the sheafification of $U\mapsto[\rN_{\r{pre}}\colon\sA_{\r{pre}}^{p,q}(U)\to\sA_{\r{pre}}^{p-1,q+1}(U)]$.

For $0\leq r\leq p$, we denote by $\rN_X^r\colon\sA_X^{p,q}\to\sA_X^{p-r,q+r}$ the iterated composition.
\end{definition}

\begin{lem}\label{le:functorial}
We have
\begin{enumerate}
  \item The map $\rN_X^p\colon\sA_X^{p,q}\to\sA_X^{0,p}$ coincides with $p!\cdot\rJ$.

  \item $\rN_X$ commutes with $\rd''$, that is, $\rN_X\rd''=\rd''\rN_X\colon\sA_X^{p,q}\to\sA_X^{p-1,q+2}$ for $p\geq 1$.

  \item $(\rN_X-)\wedge -=-(-\wedge(\rN_X-))\colon\sA^{p,q}_X\times\sA^{p',q'}_X\to\sA_X^{p+p'-1,q+q'+1}$ for $p,p'\geq 1$ and $p+p'\geq\dim X+1$.

  \item Let $\varphi\colon X'\to X$ be a map of $K$-analytic spaces. Then
       \[(\varphi_*\rN_{X'})\varphi^*=\varphi^*\rN_X\colon\sA_X^{p,q}\to\varphi_*\sA_{X'}^{p-1,q+1}.\]
\end{enumerate}
\end{lem}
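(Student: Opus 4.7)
The plan is to deduce each of the four statements from its local counterpart (Lemmas \ref{le:J_pre}, \ref{le:differential_local}, \ref{le:wedge_local}, \ref{le:functorial_local}) by unwinding the two-step construction of $\sA^{p,q}_X$: one first descends from $\sA^{p,q}_V$ to $\sA^{p,q}_P = j^{-1}\sA^{p,q}_V/\sN^{p,q}$ on a polyhedral complex $P\subset V=T_\trop$, then takes the colimit over tropical charts and sheafifies. Since $\rN_X$, $\rd''$, $\rJ$, $\wedge$, and pullbacks by morphisms of analytic spaces are all defined by this very procedure, any identity that holds on $\sA^{p,q}_V$ and is preserved under the quotient by $\sN^{p,q}$ and under transition maps between charts will automatically hold on $\sA^{p,q}_X$.

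For parts (1) and (2) this reduction is immediate: Lemmas \ref{le:J_pre} and \ref{le:differential_local} give $\rN^p = p!\cdot\rJ$ on $\sA^{p,0}_V$ and $\rN\rd''=\rd''\rN$ on $\sA^{p,q}_V$, respectively, and both sides manifestly respect $\sN^{*,*}$ and transition maps, so the identities descend first to $\sA^{*,*}_P$ and then, via the colimit and sheafification, to $\sA^{*,*}_X$. For part (4), given $\varphi\colon X'\to X$ and a tropical chart $(f\colon U\to T,P)$ on an open $U\subset X$, the composition $(f\circ\varphi,P)$ is a tropical chart on $\varphi^{-1}(U)$, and a cofinal system of charts on $X'$ is obtained by further pairing with moment maps on $X'$; on each such pair the induced map of tropicalizations is affine, so Lemma \ref{le:functorial_local} yields the desired equality $(\varphi_*\rN_{X'})\varphi^* = \varphi^*\rN_X$ chart by chart.

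Part (3) is the one requiring an extra ingredient, since the local Lemma \ref{le:wedge_local} demands $p+p'\geq\dim V+1$, while the global statement only assumes $p+p'\geq\dim X+1$, and in general $\dim T_\trop$ can be much larger than $\dim X$. The key observation is that the identity $\rN\omega\wedge\omega' = -\omega\wedge\rN\omega'$ in $\sA^{*,*}_P$ only needs to be checked modulo $\sN^{*,*}$, i.e.\ after restriction to each affine span $\langle C\rangle$ as $C$ ranges over the polyhedra of $P$. One refines $(f,P)$ to a cofinal chart in which every polyhedron of $P$ meets $f_\trop(U)$ in its relative interior; then $\dim\langle C\rangle=\dim C\leq\dim f_\trop(U)\leq\dim X$, and the hypothesis $p+p'\geq\dim X+1\geq\dim\langle C\rangle+1$ brings us into the range of Lemma \ref{le:wedge_local} applied on $\langle C\rangle$.

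The main obstacle will be producing such a cofinal system of tropical charts, i.e.\ controlling the dimension of polyhedra of $P$ in terms of $\dim X$. This is a polyhedral refinement question and should follow from the standard formalism of tropical charts in \cite{CLD12}*{\Sec 3}; failing a direct reference, one can give an explicit subdivision argument using that $f_\trop(U)$ is itself a polyhedral subset of $T_\trop$ of dimension at most $\dim U\leq\dim X$, and that only polyhedra meeting this subset contribute to the sheaf $\sA^{p,q}_P$ after modding out by $\sN^{p,q}$.
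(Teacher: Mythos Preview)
Your proposal is correct and follows the same approach as the paper: each of the four assertions is reduced to its local counterpart (Lemmas \ref{le:J_pre}, \ref{le:differential_local}, \ref{le:wedge_local}, \ref{le:functorial_local}) via the sheafification procedure defining $\sA^{*,*}_X$. The paper's own proof is literally a single sentence invoking these four lemmas, so you are supplying details the paper omits rather than taking a different route.

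The one place where you add genuine content is part (3), and you are right to do so. The paper simply cites Lemma \ref{le:wedge_local}, but as you observe, that lemma is stated under the hypothesis $p+p'\geq\dim V+1$, whereas the global claim only assumes $p+p'\geq\dim X+1$, and $\dim T_\trop$ may exceed $\dim X$. Your resolution---restricting to each $\langle C\rangle$ and using that in a cofinal system of charts every polyhedron $C$ of $P$ has $\dim\langle C\rangle\leq\dim X$---is correct. The relevant input is that the tropicalization $f_\trop(U)$ of a $K$-analytic space has dimension at most $\dim U$ (this is \cite{CLD12}*{(3.2.3)}), and that in the colimit defining $\sA^{p,q}_{\r{pre}}(U)$ one may refine $P$ so that all its faces are contained in $f_\trop(U)$; equivalently, only the restriction to $f_\trop(U)$ matters modulo $\sN^{*,*}$. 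With that in hand, Lemma \ref{le:wedge_local} applies on each $\langle C\rangle$ and the identity follows.
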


\begin{proof}
They are consequences of Lemma \ref{le:J_pre}, Lemma \ref{le:differential_local}, Lemma \ref{le:wedge_local} and Lemma \ref{le:functorial_local}, respectively.
\end{proof}

For a fixed integer $p$, we have the complex
\[
(\sA_X^{p,\bullet},\rd'')\colon \sA_X^{p,0}\xrightarrow{\rd''}\sA_X^{p,1}\xrightarrow{\rd''}\cdots.
\]

\begin{definition}[Dolbeault cohomology, \cite{Liu}]
Let $X$ be a $K$-analytic space. We define the \emph{Dolbeault cohomology}
(of forms) to be
\[H^{p,q}(X)\coloneqq\frac{\Ker(\rd''\colon\sA_X^{p,q}(X)\to\sA_X^{p+1}(X))}
{\IM(\rd''\colon\sA_X^{p,q-1}(X)\to\sA_X^{p,q}(X))}.\]
\end{definition}

The monodromy map in Definition \ref{de:monodromy} is in fact a map of complexes
\[
\rN_X\colon \sA_X^{p,\bullet}\to\sA_X^{p-1,\bullet}[1],
\]
and thus induces a map
\[\rN_X\colon H^{p,q}(X)\to H^{p-1,q+1}(X)\]
of Dolbeault cohomology when $p\geq 1$.

Let $\sO_X$ be the structure sheaf of $X$. For $p\geq 0$, let $\sO_X^{(p)}$ be the sheaf such that for every open subset $U$ of $X$, $\sO_X^{(p)}(U)$ is the $\bQ$-vector space spanned by symbols $\{f_1,\dots,f_p\}$ with $f_i\in\sO_X^*(U)$. For $p\geq 0$, we have a natural map
\[\tau\colon\sO_X^{(p)}\to\Ker[\rd''\colon\sA^{p,0}_X\to\sA^{p,1}_X]\]
of $\bQ$-sheaves on $X$. Let $\sT_X^p$ be its image sheaf. We recall the definition of $\tau$. For an open subset $U$ of $X$ and $f_1,\dots,f_p\in\sO^*_X(U)$, we have a moment map $f=(f_1,\dots,f_p)\colon U\to T=(\bG_m^\an)^p$. Let $\{x_1,\dots,x_p\}$ be the standard coordinates of $T_\trop=\bR^p$ \footnote{The map $\bG_m^\an\to(\bG_m^\an)_\trop=\bR$ is given by $-\log|\;|$.}. Then $\tau(\{f_1,\dots,f_p\})$ is defined as $\rd'x_1\wedge\cdots\wedge\rd'x_p$, regarded as an element in $\Ker[\rd''\colon\sA^{p,0}_X(U)\to\sA^{p,1}_X(U)]$.

\begin{proposition}
The canonical map
\[\sT_X^p\otimes_\bQ\bR\to\Ker[\rd''\colon\sA^{p,0}_X\to\sA^{p,1}_X]\]
is an isomorphism. It induces a canonical isomorphism
\[H^q(X,\sT_X^p)\otimes_\bQ\bR\simeq H^{p,q}(X).\]
\end{proposition}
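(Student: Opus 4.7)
The plan is to prove the two assertions separately. For the sheaf isomorphism $\sT_X^p\otimes_\bQ\bR\xrightarrow{\sim}\Ker[\rd''\colon\sA_X^{p,0}\to\sA_X^{p,1}]$, I would work locally on tropical charts, showing that the kernel is locally free over $\bR$ on wedges of coordinate forms, and that $\sT_X^p$ carves out the correct $\bQ$-structure in it. Granted this, the cohomology isomorphism follows from the $\rd''$-Poincar\'e lemma for superforms together with the fine-sheaf property of $\sA_X^{p,q}$, both drawn from \cite{CLD12}.

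For the first step, fix a tropical chart $(f\colon U\to T,P)$ of $X$ with affine coordinates $x_1,\ldots,x_n$ on $T_\trop$. The explicit formula for $\rd''$ recalled in Section \ref{ss:2} shows that a $(p,0)$-form $\omega=\sum_I\omega_I(x)\rd'x_I$ is $\rd''$-closed if and only if each coefficient $\omega_I$ is locally constant on every polyhedron of $P$. Hence $\Ker[\rd''\colon\sA_P^{p,0}\to\sA_P^{p,1}]$ is the locally constant $\bR$-sheaf spanned by the basis elements $\rd'x_{i_1}\wedge\cdots\wedge\rd'x_{i_p}$. Surjectivity of the map $\sT_X^p\otimes_\bQ\bR\to\Ker[\rd'']$ is then immediate, since $\tau(\{f_{i_1},\ldots,f_{i_p}\}) = \rd'x_{i_1}\wedge\cdots\wedge\rd'x_{i_p}$ realizes each basis element. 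For injectivity, the key observation is that for any invertible analytic $g$ on $U$ the tropicalization $-\log|g|$ has integer slopes along the polyhedra of $P$ (a consequence of the monomial nature of the tropicalization map), so that each $\tau(\{g_1,\ldots,g_p\})$ lies in the $\bZ$-span of the $\rd'x_{i_1}\wedge\cdots\wedge\rd'x_{i_p}$. A direct tensor-product calculation then shows that any $\bR$-linear relation among such $\tau$'s descends from a $\bQ$-linear relation already present in $\sT_X^p$.

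For the second step, I would invoke the $\rd''$-Poincar\'e lemma of \cite{CLD12}*{\Sec 3}, which gives that the complex $(\sA_X^{p,\bullet},\rd'')$ is an exact resolution of its degree-zero kernel sheaf. By the first step this is a resolution of $\sT_X^p\otimes_\bQ\bR$. Since the sheaves $\sA_X^{p,q}$ admit partitions of unity (\cite{CLD12}*{\Sec 3}), they are fine, hence $\Gamma(X,-)$-acyclic. This yields
\[
H^q(X,\sT_X^p)\otimes_\bQ\bR \simeq H^q(X,\sT_X^p\otimes_\bQ\bR) \simeq H^q(\Gamma(X,\sA_X^{p,\bullet})) = H^{p,q}(X),
\]
where the first isomorphism uses flatness of $\bR$ over $\bQ$.

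The main obstacle in this plan is the $\rd''$-Poincar\'e lemma at the level of a general $K$-analytic space, which is substantially nontrivial and is to be imported from \cite{CLD12} as a black box. A secondary delicate point is the $\bQ$-integral structure in the sheaf identification, which ultimately rests on the integrality of tropicalization slopes for invertible analytic functions.
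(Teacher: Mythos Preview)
Your approach matches the paper's: both reduce the cohomology isomorphism to the fact that $(\sA_X^{p,\bullet},\rd'')$ is a fine resolution of its kernel sheaf, and the paper is in fact terser than you, not spelling out the sheaf identification $\sT_X^p\otimes_\bQ\bR\simeq\Ker\rd''$ at all. One correction on your ``main obstacle'': the $\rd''$-Poincar\'e lemma at the level of analytic spaces is cited from \cite{Jel16}*{Corollary~4.6} rather than \cite{CLD12} (the latter supplies the partition of unity making $\sA_X^{p,q}$ fine, via Corollaire~3.3.7; Jell's paper is what extends the Poincar\'e lemma from polyhedral complexes to Berkovich spaces).
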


\begin{proof}
By \cite{Jel16}*{Corollary 4.6} and \cite{CLD12}*{Corollaire 3.3.7}, the complex $(\sA_X^{p,\bullet},\rd'')$ is a fine resolution of
$\Ker[\rd''\colon\sA^{p,0}_X\to\sA^{p,1}_X]$. Thus the proposition follows from standard facts in homological algebra.
\end{proof}

\section{Semistable schemes and cohomological monodromy maps}
\label{ss:4}

In this section, we introduce some constructions for strictly semistable schemes, and review the (cohomological) monodromy maps coming from the weight spectral sequence.

Let $K$ be a discrete non-Archimedean field, that is, a non-Archimedean field with discrete valuation. Fix a rational prime $\ell$ that is invertible in $\widetilde{K}$. Denote by $K^\ur\subset K^\ra$ the maximal unramified extension with the residue field $\widetilde{K}^\rs$, which is a separable closure of $\widetilde{K}$. Let $\rI_K\subset\Gal(K^\ra/K)$ be the inertia subgroup, so the quotient group $\Gal(K^\ra/K)/\rI_K$ can be identified with $\Gal(\widetilde{K}^\rs/\widetilde{K})$. Denote by $t_\ell\colon\rI_K\to\bZ_\ell(1)$ the ($\ell$-adic) tame quotient homomorphism, that is, the one sending $\sigma\in\rI_K$ to $(\sigma(\varpi^{1/\ell^n})/\varpi^{1/\ell^n})_n$ for a uniformizer $\varpi$ of $K$. We fix an element $T\in\rI_K$ such that $t_\ell(T)$ is a topological generator of $\bZ_\ell(1)$.

For a separated scheme $X$ of finite type over $K$, put $X_\ra=X\otimes_KK^\ra$, and let $X^\an$ be the associated $K$-analytic space (\cite{Berk93}). Put $X^\an_\ra=(X\otimes_K\widehat{K^\ra})^\an$. For a scheme $\cX$ over $K^\circ$, put $\cX_\eta=\cX\otimes_{K^\circ}K$. For a scheme $Z$ over $\widetilde{K}$, put $Z_\rs=Z\otimes_{\widetilde{K}}\widetilde{K}^\rs$.

We first recall the following definition (see \cite{Sai03} for example).

\begin{definition}[Strictly semistable scheme]\label{de:semistable_schemes}
Let $\cX$ be a scheme locally of finite presentation over $\Spec K^\circ$. We say that $\cX$ is \emph{strictly semistable} if it is Zariski locally smooth over
\[\Spec K^\circ[t_0,\dots,t_p]/(t_0\cdots t_p-\varpi)\]
for some integer $p\geq 0$ (which may vary) and a uniformizer $\varpi$ of $K$.
\end{definition}

Let $\cX$ be a proper strictly semistable scheme over $K^\circ$. The special fiber $Y\coloneqq\cX\otimes_{K^\circ}\widetilde{K}$ is a normal crossing divisor of $\cX$. Suppose that $\{Y^1,\dots,Y^m\}$ is the set of irreducible components of $Y$. For a nonempty subset $I\subset\{1,\dots,m\}$, put $Y^I=\bigcap_{i\in I}Y^i$. For $p\geq 0$, put
\[
Y^{(p)}=\coprod_{I\subset\{1,\dots,m\},|I|=p+1}Y^I.
\]
Then $Y^{(p)}$ is a finite disjoint union of smooth proper subschemes of $Y$ of codimension $p$ over $\widetilde{K}$. Denote by $Y^{[p]}$ the image of the canonical morphism $Y^{(p)}\to Y$.

\begin{notation}\label{no:intersection}
For a subscheme $Z$ of $Y$, put $I(Z)=\{i\in\{1,\dots,m\}\res Z\subset Y^i\}$ and $p(Z)=|I(Z)|-1$.
\end{notation}

\begin{construction}\label{no:coordinate}
Let $y$ be a point of $Y$ with $p=p(y)\geq 0$. Suppose that $I(y)=\{i_0<\cdots<i_p\}$ (Notation \ref{no:intersection}). Choose an affine open neighborhood $\cV$ of $y$ in $\cX$ such that $\cV$ is smooth over $\Spec K^\circ[t_0,\dots,t_p]/(t_0\cdots t_p-\varpi)$ under which the divisor defined by $t_j$ is $\cV\cap Y^{i_j}$. Let $g_j$ be the restriction of $t_j$ to $\cV_\eta$.

We say that $(\cV,\{g_0,\dots,g_p\})$ is a \emph{semistable chart} at $y$ if moreover $\cV\cap W$ is either empty or connected for every irreducible component $W$ of $Y^{[r]}$ with $r\geq 0$.
\end{construction}

For subsets $J\subset I\subset\{1,\dots,m\}$ such that $|I|=|J|+1$, let $i_{JI}\colon\bigcap_{i\in I}Y^i\to\bigcap_{i\in J}Y^i$ denote the closed immersion. If $I=\{i_0<\cdots<i_p\}$ and $J=I\setminus\{i_j\}$, then we put $\epsilon(J,I)=(-1)^j$. We define the \emph{pullback map}
\begin{align}\label{eq:pullback}
\delta_p^*\colon H^q_{\et}(Y^{(p)}_\rs,\bQ_\ell)\to H^q_{\et}(Y^{(p+1)}_\rs,\bQ_\ell)
\end{align}
to be the alternating sum $\sum_{I\subset J,|I|=|J|-1=p+1}\epsilon(I,J)i_{IJ}^*$ of the restriction maps, and the \emph{pushforward map}
\begin{align}\label{eq:pushforward}
\delta_{p*}\colon H^q_{\et}(Y^{(p)}_\rs,\bQ_\ell)\to H^{q+2}_{\et}(Y^{(p-1)}_\rs,\bQ_\ell(1))
\end{align}
to be the alternating sum $\sum_{I\supset J,|I|=|J|+1=p+1}\epsilon(J,I)i_{JI*}$ of the Gysin maps. These maps satisfy the formula
\begin{align*}
\delta_{p-1}^*\circ\delta_{p*}+\delta_{p+1*}\circ\delta_p^*=0.
\end{align*}

Let us recall the following weight spectral sequence $E^{p,q}_\cX$ attached to $\cX$\footnote{It also depends on the ordering of the set of irreducible components of $Y$.}, originally studied in \cite{RZ82}:
\[
E^{p,q}_{\cX,1}=\bigoplus_{i\geq\max(0,-p)}H^{q-2i}_{\et}(Y_\rs^{(p+2i)},\bQ_\ell(-i))\Rightarrow H^{p+q}_{\et}(\cX_{\eta,\ra},\bQ_\ell).
\]
Here we will follow the convention and discussion in \cite{Sai03}. When $\cX$ is fixed, we write $E$ for $E_\cX$ for short. By \cite{Sai03}*{Corollary 2.8 (2)}, we have a map $\mu\colon E^{\bullet-1,\bullet+1}_\bullet\to E^{\bullet+1,\bullet-1}_\bullet$ of spectral sequences (depending on $T$). The differential map $d_1^{p,q}$ is an appropriate sum of pullback and pushforward maps. The map $\mu^{p,q}_1\colon\ E^{p-1,q+1}_1\to E^{p+1,q-1}_1$ is the sum of its restrictions to each direct summand $H^{q+1-2i}_{\et}(Y_\rs^{(2i+1)},\bQ_\ell(-i))$, and such restriction is the tensor product map by $t_\ell(T)$ (resp.\ the zero map) if $H^{q+1-2i}_{\et}(Y_\rs^{(2i+1)},\bQ_\ell(-i+1))$ does (resp.\ does not) appear in the target. For integers $p,q$ and $r\geq 0$, the map $\mu$ induces a map
\[\rN_\cX^r\colon E_2^{p,q}(r)\to E_2^{p+2r,q-2r}\]
which depends only on $\cX$.

\begin{conjecture}[Weight-monodromy conjecture]\label{co:purity}
Let $\cX$ be a proper strictly semistable scheme over $K^\circ$. Then for all integers $p,r\geq 0$, the map
\[\rN_\cX^r\colon E_2^{-r,p+r}(r)\to E_2^{r,p-r}\]
is an isomorphism.
\end{conjecture}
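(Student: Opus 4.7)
The plan is to attack this via Deligne's theory of weights, which is the standard route to the weight-monodromy conjecture. The starting observation is that each direct summand $H^{q-2i}_\et(Y^{(p+2i)}_\rs,\bQ_\ell(-i))$ appearing in $E_1^{p,q}$ is pure of weight $q$ --- the naive weight $q-2i$ of the cohomology of the smooth proper $Y^{(p+2i)}$ over $\widetilde K$, shifted by $2i$ via the Tate twist --- provided one has a sensible notion of weight, for instance when $\widetilde K$ is finite, via Weil II. The $d_1$ differentials are alternating sums of pullbacks and Gysin maps, which are morphisms of pure Galois representations of the same weight, so $E_2^{p,q}$ inherits purity of weight $q$. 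In particular, $E_2^{-r,p+r}(r)$ and $E_2^{r,p-r}$ are both pure of weight $p$, and $\rN_\cX^r$ is a morphism of Galois representations between them.

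The conjecture now reduces to showing that the monodromy filtration and the weight filtration coincide on the $E_2$ page. Equivalently, the nilpotent operator $\mu$ (which shifts $p$ by $+2$ and lowers the weight by $-2$ after the Tate twist) should complete to an $\mathfrak{sl}_2$-action on $\bigoplus_{p+q=k} E_2^{p,q}$ for each fixed total degree $k$, whose Cartan element implements the weight grading. I would construct this action stratum by stratum: hard Lefschetz on each $Y^{(p+2i)}$ supplies a Lefschetz $\mathfrak{sl}_2$-triple, and $\mu$ should play the role of the nilpotent operator lowering the index $p$. The required isomorphism then follows from the basic representation theory of $\mathfrak{sl}_2$, once one checks that these stratum-by-stratum $\mathfrak{sl}_2$-structures glue into a global one that intertwines correctly with the combinatorial structure of the pullback and Gysin maps defining $d_1$.

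For the equal-characteristic case, where $K \cong k(\!(t)\!)$ with $k$ finite, this strategy was carried out in essence by Deligne: Weil II supplies the purity, hard Lefschetz is known unconditionally, and the combinatorial compatibility is built into the construction of the weight spectral sequence. The main obstacle, as is well known, is the mixed-characteristic case: there one lacks the sharp control over weights needed to run the purity argument, and the conjecture remains open in general despite celebrated partial results (Rapoport--Zink in low dimensions, Ito for certain fourfolds, Scholze for subvarieties of smooth toric varieties via perfectoid techniques). A proof in full generality appears to require a substantial new input --- either a mixed-characteristic purity theorem of a new kind, or a transfer mechanism generalising the perfectoid approach --- and consequently any downstream application (such as the tropical Hodge isomorphisms proposed in this paper) should be restricted to settings in which these established cases already suffice.
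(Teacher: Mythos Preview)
The statement is labeled a \emph{Conjecture} in the paper and is not proved there; the paper only records partial cases in Proposition~\ref{pr:purity}, citing \cite{Ito05} for the equal-characteristic case and for $r=1$ under a hypothesis on $\widetilde{K}$. Your submission is likewise not a proof but a survey of the expected strategy and its known range of validity, and you acknowledge as much in your final paragraph. So there is no proof in the paper to compare against, and no complete proof in your proposal either.

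One point in your sketch is imprecise. The $\mathfrak{sl}_2$-triple furnished by hard Lefschetz on a single $Y^{(p+2i)}$ has as its nilpotent part cup product with a hyperplane class, which shifts cohomological degree; the monodromy operator $\mu$ on the $E_1$-page instead shifts the combinatorial index $p$ and is (after twist) the identity on common summands. These are different nilpotents, and the stratum-wise Lefschetz $\mathfrak{sl}_2$'s do not literally glue into the monodromy $\mathfrak{sl}_2$ in the way you describe. In Ito's equal-characteristic argument one does not build such a global $\mathfrak{sl}_2$ directly: one spreads the family over a curve over a finite field, applies Weil~II to obtain purity of $H^n_{\et}(\cX_{\eta,\ra},\bQ_\ell)$ itself (not merely of the $E_2$-terms), and then the coincidence of the monodromy and weight filtrations on the abutment forces the iterated maps $\rN_\cX^r$ to be isomorphisms on $E_2$. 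Hard Lefschetz on the strata enters in verifying auxiliary properties of the spectral sequence, not as the source of the monodromy action.
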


For $p\geq 1$, the map $\rN_\cX^p\colon E_2^{-p,2p}(p)\to E_2^{p,0}$ is simply the map
\begin{multline}\label{eq:weight}
E_2^{-p,2p}(p)=\Ker[H^0_{\et}(Y^{(p)}_\rs,\bQ_\ell)\xrightarrow{(\delta_p^*,\delta_{p*})} H^0_{\et}(Y^{(p+1)}_\rs,\bQ_\ell)\oplus H^2_{\et}(Y^{(p-1)}_\rs,\bQ_\ell(1))]\\
\to\frac{\Ker[H^0_{\et}(Y^{(p)}_\rs,\bQ_\ell)\xrightarrow{\delta_p^*}H^0_{\et}(Y^{(p+1)}_\rs,\bQ_\ell)]}
{\IM[H^0_{\et}(Y^{(p-1)}_\rs,\bQ_\ell)\xrightarrow{\delta_{p-1}^*}H^0_{\et}(Y^{(p)}_\rs,\bQ_\ell)]}=E_2^{p,0}
\end{multline}
induced by the identity map on $H^0_{\et}(Y^{(p)}_\rs,\bQ_\ell)$.

\begin{proposition}\label{pr:purity}
We have
\begin{enumerate}
  \item The spectral sequence $E_\cX$ degenerates from the second page. In particular, $E_2^{p,0}$ is canonically a subspace of $H^p_{\et}(\cX_{\eta,\ra},\bQ_\ell)$.

  \item If $K$ has equal characteristic, then Conjecture \ref{co:purity} holds.

  \item If $\widetilde{K}$ is a purely inseparable extension of a finitely generated extension of a prime field, then $\rN_\cX\colon E_2^{-1,2}(1)\to E_2^{1,0}$ is an isomorphism.
\end{enumerate}
\end{proposition}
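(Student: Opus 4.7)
The plan for (1) is to establish $E_2$-degeneration via Deligne's theory of Frobenius weights. When $\widetilde{K}$ is finite, each summand $H^{q-2i}_\et(Y^{(p+2i)}_\rs,\bQ_\ell(-i))$ of $E_1^{p,q}$ is pure of weight $q$ (by Weil~II applied to the proper smooth scheme $Y^{(p+2i)}$); hence so is $E_2^{p,q}$, being a subquotient. The differential $d_r\colon E_r^{p,q}\to E_r^{p+r,q-r+1}$ for $r\geq 2$ is then a Galois-equivariant map between pure pieces of differing weights $q$ and $q-r+1$, and so must vanish. For general $\widetilde{K}$ I would spread out $\cX$ together with its stratification $\{Y^I\}$ over a finitely generated $\bZ$-subalgebra $A\subset K^\circ$; smooth-proper base change identifies the $E_r^{p,q}$ with the base change of their counterparts over $A$, and specializing at closed points of $\Spec A$ lying over the image of $\varpi$ reduces to the finite-residue-field case. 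The inclusion $E_2^{p,0}\hookrightarrow H^p_\et(\cX_{\eta,\ra},\bQ_\ell)$ is then the edge map of the now-degenerate spectral sequence.

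For (2), the weight-monodromy conjecture in equal characteristic over a finite residue field is Deligne's theorem from Weil~II. Its extension to arbitrary $\widetilde{K}$ of equal characteristic follows by the same spreading-out and specialization as in (1), using that both the spectral sequence $E_\cX$ and the monodromy map $\mu$ are compatible with specialization. I would invoke this as a black box.

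For (3), the plan is to reduce to the classical weight-monodromy theorem for $H^1$ of an abelian variety. Form the Albanese $A\coloneqq\Alb(\cX_\eta)$ over $K$; the canonical $\Gal(K^\ra/K)$-equivariant isomorphism $H^1_\et(\cX_{\eta,\ra},\bQ_\ell)\simeq H^1_\et(A_\ra,\bQ_\ell)$ reduces the question to $H^1$ of $A$. By Grothendieck's semistable reduction theorem for abelian varieties together with the classical analysis of the inertia action on $V_\ell(A)$ (SGA~7, I.5), the monodromy filtration on $H^1_\et(A_\ra,\bQ_\ell)$ coincides with the weight filtration whenever weights are defined, which is precisely guaranteed by the residue-field hypothesis of (3). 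Combining this with (1), I identify $E_2^{-1,2}(1)$ and $E_2^{1,0}$ with the two weight-$0$ graded pieces of $H^1_\et(\cX_{\eta,\ra},\bQ_\ell)$ that are exchanged by the classical monodromy, and identify $\rN_\cX$ with the restriction of the latter; weight-monodromy for $H^1$ of $A$ then yields the claimed isomorphism.

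The principal obstacle I foresee lies in (3): one must verify carefully that the combinatorial monodromy map $\rN_\cX$ on the $E_2$-page---built from $t_\ell(T)$ and the pullback and Gysin maps among the strata $Y^{(\bullet)}$ as in \cite{Sai03}---corresponds, under the spectral-sequence filtration on $H^1_\et(\cX_{\eta,\ra},\bQ_\ell)$, to the genuine Galois-theoretic monodromy induced by $T\in\rI_K$ on $\ell$-adic cohomology. This compatibility is essentially built into the construction of the weight spectral sequence, and I would appeal to \cite{Sai03} for it; packaging all the inputs then delivers (3).
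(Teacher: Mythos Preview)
Your proposal is essentially correct and matches the standard arguments; the paper itself simply cites \cite{Ito05}*{Theorem~1.1} for (1)--(2) and \cite{Ito05}*{Proposition~2.5, Remark~2.4} for (3), so you are effectively sketching Ito's proofs rather than diverging from them.

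One small correction on the spreading-out in (1): the $E_1$-page and all subsequent pages of the weight spectral sequence are built entirely from the \'etale cohomology of the smooth proper strata $Y^{(p)}$ over $\widetilde{K}$ together with the pullback and Gysin maps among them. Hence the spreading-out should take place over a finitely generated $\bZ$-subalgebra $B\subset\widetilde{K}$ (one spreads out the $Y^{(p)}$ and the morphisms $i_{JI}$), not over $A\subset K^\circ$. Once done correctly, smooth-proper base change and constructibility give that degeneration at closed points of $\Spec B$ forces degeneration at the generic point. Your description as written would have you spreading out the whole semistable family, which is unnecessary and slightly awkward to make precise.

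For (3) your route through the Albanese and SGA~7 is exactly Ito's argument (his Proposition~2.5), and the compatibility you flag as an obstacle---that the map $\mu$ on the spectral sequence induces the logarithm of the unipotent inertia action on the abutment---is precisely \cite{Sai03}*{Corollary~2.8}, which the paper already invokes when introducing $\mu$. So your concern is well-placed but already handled by the cited literature.
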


\begin{proof}
See \cite{Ito05}*{Theorem 1.1} for (1) and (2). Part (3) follows from \cite{Ito05}*{Proposition 2.5 \& Remark 2.4}.
\end{proof}

Let $X$ be a separated scheme of finite type over $K$. From \cite{Berk00}*{\Sec 1}, we have the following map
\begin{align}\label{eq:comparison}
\kappa^p_X\colon H^p(X^\an,\bQ_\ell)\to H^p_{\et}(X_\ra,\bQ_\ell).
\end{align}
It is defined as the composition of the restriction map $H^p(X^\an,\bQ_\ell)\to H^p_{\et}(X^\an_\ra,\bQ_\ell)$ and the inverse of the comparison isomorphism $H^p_{\et}(X_\ra,\bQ_\ell)\xrightarrow{\sim} H^p_{\et}(X^\an_\ra,\bQ_\ell)$.

\begin{lem}\label{le:comparison}
The map $\kappa^p_{\cX_\eta}$ \eqref{eq:comparison} is injective with image contained in $E_2^{p,0}$. Moreover, if every irreducible component of $Y^{(r)}$ ($r\geq 0$) is geometrically irreducible, then $\kappa^p_{\cX_\eta}$ induces an isomorphism $H^p(\cX_\eta^\an,\bQ_\ell)\xrightarrow{\sim}E_2^{p,0}$.
\end{lem}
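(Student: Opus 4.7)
The plan is to identify both $H^p(\cX_\eta^\an,\bQ_\ell)$ and $E_2^{p,0}$ with (subspaces of) the cohomology of the dual complex of $\cX$, and then verify compatibility with $\kappa^p_{\cX_\eta}$. Let $\Delta(\cX)$ denote the dual $\Delta$-complex of the special fiber $Y$: its $p$-simplices are in canonical bijection with the irreducible components of $Y^{(p)}$ (via $I\mapsto Y^I$, see Notation \ref{no:intersection}), and its face maps are induced from the closed immersions $i_{JI}$. Let $C_0^\bullet$ denote the corresponding simplicial cochain complex with $\bQ_\ell$-coefficients, so $C_0^p$ is the free $\bQ_\ell$-module on the set of irreducible components of $Y^{(p)}$ with differential the alternating sum of restriction maps.

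First I would invoke Berkovich's contractibility theorem from \cite{Berk00}: for a proper strictly semistable $\cX$, the analytic generic fiber $\cX_\eta^\an$ admits a canonical strong deformation retraction onto its skeleton, and the latter is canonically homeomorphic to the geometric realization of $\Delta(\cX)$. Since $\bQ_\ell$ is constant this yields a canonical isomorphism $H^p(\cX_\eta^\an,\bQ_\ell)\cong H^p(C_0^\bullet)$.

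Next I would unwind $E_2^{p,0}$. For $p\geq 0$, the condition $q-2i\geq 0$ with $i\geq 0$ forces only the summand $i=0$ to contribute, so $E_1^{p,0}=H^0_\et(Y^{(p)}_\rs,\bQ_\ell)$, which is the free $\bQ_\ell$-module on the set of geometric connected components of $Y^{(p)}$; the differential $d_1^{p,0}$ equals the pullback map $\delta_p^*$ of \eqref{eq:pullback}. Grouping geometric connected components of $Y^{(p)}$ according to the irreducible component they lie in defines an injective map of complexes $\iota^\bullet\colon C_0^\bullet\hookrightarrow E_1^{\bullet,0}$ that sends an irreducible component $W$ to the sum of its geometric connected components; compatibility with differentials is immediate from the definition of $\delta_p^*$. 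This $\iota^\bullet$ identifies $C_0^\bullet$ with the subcomplex of $\Gal(\widetilde{K}^\rs/\widetilde{K})$-invariants of $E_1^{\bullet,0}$. Since the Galois action on each $E_1^{p,0}$ factors through a finite quotient (a single irreducible component has finitely many geometric components, permuted by a finite Galois extension), this invariant subcomplex is a $\bQ_\ell$-linear direct summand, so passing to cohomology yields an injection $H^p(C_0^\bullet)\hookrightarrow H^p(E_1^{\bullet,0})=E_2^{p,0}$. Under the hypothesis that every irreducible component of every $Y^{(r)}$ is geometrically irreducible, $\iota^r$ is surjective in each degree, hence the injection is an isomorphism.

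The remaining step, which I expect to be the main obstacle, is to check that the composite
\[H^p(\cX_\eta^\an,\bQ_\ell)\xrightarrow{\sim} H^p(C_0^\bullet)\hookrightarrow E_2^{p,0}\subseteq H^p_\et(\cX_{\eta,\ra},\bQ_\ell)\]
constructed above coincides with $\kappa^p_{\cX_\eta}$ (the second inclusion uses Proposition \ref{pr:purity}(1)). This amounts to tracing Berkovich's deformation retraction through the comparison isomorphism $H^p_\et(\cX_{\eta,\ra}^\an,\bQ_\ell)\cong H^p_\et(\cX_{\eta,\ra},\bQ_\ell)$ and matching it against the description of the weight spectral sequence in terms of nearby cycles along the strata $Y^I$. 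Concretely, one must verify that Berkovich's representation of a cohomology class on $\cX_\eta^\an$ by simplicial cochains on the skeleton recovers, under the edge map of the weight spectral sequence, the corresponding class in $E_1^{p,0}=H^0_\et(Y^{(p)}_\rs,\bQ_\ell)$; this compatibility is essentially already present in \cite{Berk00}, and after granting it, the lemma follows.
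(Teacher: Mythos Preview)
Your approach is correct and runs parallel to the paper's, but the paper's packaging resolves precisely the step you flag as the main obstacle. Rather than going through the skeleton and the dual complex, the paper works with Zariski cohomology of the special fiber and invokes the commutative diagram from \cite{Berk00}*{\S 4}
\[
\xymatrix{
H^p_{\r{Zar}}(Y,\bQ_\ell) \ar[r]^-{\simeq}\ar[d] & H^p(\cX_\eta^\an,\bQ_\ell) \ar[d] \\
H^p_{\r{Zar}}(Y_\rs,\bQ_\ell) \ar[r]^-{\simeq}\ar[d] & H^p(\cX_{\eta,\ra}^\an,\bQ_\ell) \ar[d] \\
H^p_{\et}(Y_\rs,\bQ_\ell) \ar[r] & H^p_{\et}(\cX_{\eta,\ra},\bQ_\ell),
}
\]
in which the horizontal isomorphisms are \cite{Berk00}*{Lemma 4.1} and the right column is by definition $\kappa^p_{\cX_\eta}$. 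Injectivity of the upper-right vertical map is the discussion after \cite{Berk00}*{Theorem 1.1}. The identification of the image with $E_2^{p,0}$ comes from computing $H^p_{\r{Zar}}(Y_\rs,\bQ_\ell)$ by proper descent along the closed covering $\{Y^i_\rs\}$, using that higher Zariski cohomology of a constant sheaf on a smooth (hence irreducible) scheme vanishes; the resulting complex is exactly your $(E_1^{\bullet,0},\delta^*_\bullet)$. Your complex $C_0^\bullet$ is likewise identified with the \v{C}ech complex computing $H^p_{\r{Zar}}(Y,\bQ_\ell)$, and your injection $C_0^\bullet\hookrightarrow E_1^{\bullet,0}$ is the left vertical map. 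Under the geometric irreducibility hypothesis this becomes an isomorphism on the nose.

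So the substantive content is the same; what the paper's route buys is that compatibility with $\kappa^p_{\cX_\eta}$ is built into Berkovich's diagram and requires no further tracing through nearby cycles or the deformation retraction. Your route via the skeleton is valid, but closing the gap you identify would amount to reproving that diagram.
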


\begin{proof}
From \cite{Berk00}*{\Sec 4}, we have the following diagram
\[
\xymatrix{
H^p_{\r{Zar}}(Y,\bQ_\ell) \ar[r]^-{\simeq}\ar[d] & H^p(\cX_\eta^\an,\bQ_\ell) \ar[d] \\
H^p_{\r{Zar}}(Y_\rs,\bQ_\ell) \ar[r]^{\simeq}\ar[d] & H^p(\cX_{\eta,\ra}^\an,\bQ_\ell) \ar[d] \\
H^p_{\et}(Y_\rs,\bQ_\ell) \ar[r] & H^p_{\et}(\cX_{\eta,\ra},\bQ_\ell)
}\]
in which the composition of the two right vertical maps is just $\kappa^p_{\cX_\eta}$. By \cite{Berk00}*{Lemma 4.1}, the upper and middle horizontal maps are both isomorphisms. The map $H^p(\cX_\eta^\an,\bQ_\ell)\to H^p(\cX_{\eta,\ra}^\an,\bQ_\ell)$ is injective by the discussion after \cite{Berk00}*{Theorem 1.1}. By the proper descent and the fact that $H^i_{\r{Zar}}(Z,\bQ_\ell)=0$ for $i>0$ and $Z$ smooth over $\widetilde{K}^\rs$, the composite map
\[
H^p_{\r{Zar}}(Y_\rs,\bQ_\ell)\to H^p_{\et}(Y_\rs,\bQ_\ell)\to H^p_{\et}(\cX_{\eta,\ra},\bQ_\ell)
\]
is an isomorphism onto its image, which is $E_2^{p,0}$. Therefore, $\kappa^p_{\cX_\eta}$ \eqref{eq:comparison} is injective with image contained in $E_2^{p,0}$.

Moreover, if every irreducible component of $Y^{(r)}$ is geometrically irreducible, then the map
$H^p_{\r{Zar}}(Y,\bQ_\ell)\to H^p_{\r{Zar}}(Y_\rs,\bQ_\ell)$ is an isomorphism. The lemma follows.
\end{proof}

\begin{lem}\label{le:injective}
Suppose that one of following conditions holds:
\begin{enumerate}
  \item $K$ is a local non-Archimedean field;
  \item there is a finite extension $K'$ of $K$ such that $X\otimes_KK'$ is the generic fiber of a proper strictly semistable scheme over $K'^\circ$.
\end{enumerate}
Then the map $\kappa^p_X$ is injective for all $p\geq 0$.
\end{lem}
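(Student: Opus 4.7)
The plan is to reduce both conditions to the strictly semistable situation already handled by Lemma \ref{le:comparison}. Case (2) is essentially immediate: let $K'/K$ be the given finite extension and write $\pi\colon(X_{K'})^\an\to X^\an$ for the associated finite étale cover of degree $n=[K':K]$ (obtained by viewing the $K'$-analytic space $(X_{K'})^\an$ as a $K$-analytic space via the finite étale morphism $\sM(K')\to\sM(K)$). Because $K'\subset K^\ra$ we have $(X_{K'})_\ra=X_\ra$, so naturality of $\kappa^\bullet$ yields a commutative square
\[\xymatrix@C=4em{
H^p(X^\an,\bQ_\ell)\ar[r]^-{\kappa^p_X}\ar[d]_{\pi^*} & H^p_\et(X_\ra,\bQ_\ell)\ar@{=}[d] \\
H^p((X_{K'})^\an,\bQ_\ell)\ar[r]^-{\kappa^p_{X_{K'}}} & H^p_\et(X_\ra,\bQ_\ell).
}\]
The bottom map is injective by Lemma \ref{le:comparison} applied to the given proper strictly semistable model of $X_{K'}$. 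The trace map attached to $\pi$ satisfies $\pi_*\circ\pi^*=n\cdot\r{id}$ on $\bQ_\ell$-cohomology, so $\pi^*$ is injective; commutativity then forces $\kappa^p_X$ to be injective.

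For case (1), I would proceed in two stages. First, reduce to the proper case: choose a Nagata compactification $X\hookrightarrow\ol X$ over $K$ and reduce injectivity of $\kappa^p_X$ to injectivity of $\kappa^p_{\ol X}$ by Noetherian induction on $\dim X$, using the long exact sequences attached to the closed complement $Z=\ol X\setminus X$ on both the analytic and the étale sides, together with the compatibility of $\kappa^\bullet$ with these sequences. Second, apply de Jong's theorem on strictly semistable alterations to produce, after a further finite extension $K'/K$, a proper strictly semistable $\cX/K'^\circ$ together with a proper generically finite surjective $K'$-morphism $\cX_\eta\to\ol X_{K'}$. Cohomological proper descent for the Berkovich topology (via \cite{Berk00}) and its étale counterpart (à la Deligne/de Jong) turn the injectivity of $\kappa^p_{\cX_\eta}$ provided by Lemma \ref{le:comparison} into the injectivity of $\kappa^p_{\ol X_{K'}}$; finally the finite extension argument of case (2) descends this from $K'$ back to $K$.

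The main obstacle is case (1): the two reductions---from non-proper to proper via an open-closed decomposition, and from proper to semistable via de Jong---both rely on compatibility of $\kappa^\bullet$ with the relevant descent machinery (excision for open embeddings, and proper cohomological descent for alterations). It is this compatibility, rather than any cohomological computation, that constitutes the real technical work; by contrast, case (2) is essentially one commutative square plus the $\bQ_\ell$-linear trace identity for finite étale covers.
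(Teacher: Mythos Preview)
Your treatment of case (2) is essentially the paper's: reduce to Lemma \ref{le:comparison} over $K'$ via the commutative square and injectivity of $\pi^*$. One quibble: the map $\sM(K')\to\sM(K)$ need not be \'etale (think of an inseparable extension), so your trace identity is not available in general. The paper sidesteps this by invoking the injectivity of $H^p(X^\an,\bQ_\ell)\to H^p((X\otimes_KK')^\an,\bQ_\ell)$ directly, which follows from the discussion after \cite{Berk00}*{Theorem 1.1} (the same fact used inside the proof of Lemma \ref{le:comparison}); alternatively one can factor $K'/K$ into separable and purely inseparable pieces, the latter inducing a homeomorphism of analytic spaces.

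For case (1) your approach diverges sharply from the paper's and is where the real problem lies. The paper's proof is a one-line citation: \cite{Berk00}*{Corollary 1.2} already asserts the injectivity of $\kappa^p_X$ for arbitrary separated $X$ of finite type over a local field, with no properness, smoothness, or semistability hypothesis. Your proposed route (Nagata compactify, excise, alter \`a la de Jong, descend) is not only unnecessary but also genuinely incomplete as you yourself flag. The excision step is delicate because $H^p(X^\an,\bQ_\ell)$ is cohomology for the \emph{underlying topological space}, not \'etale cohomology, so the usual open--closed long exact sequence and its compatibility with $\kappa^\bullet$ are not available off the shelf; and ``cohomological proper descent for the Berkovich topology via \cite{Berk00}'' is not a result that paper states in the form you need. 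In short, you have correctly identified the obstacle and then not overcome it, whereas Berkovich has already done the work for you.
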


\begin{proof}
Case (1) follows from \cite{Berk00}*{Corollary 1.2}. Case (2) follows from Lemma \ref{le:comparison}, and the fact that the map $H^p(X^\an,\bQ_\ell)\to H^p((X\otimes_KK')^\an,\bQ_\ell)$ is injective.
\end{proof}

\section{Monodromy maps for tropical Dolbeault cohomology}
\label{ss:5}

In this section, we introduce the conjecture on the isomorphism of tropical Dolbeault cohomology groups under monodromy maps. Then we prove our main results.

Let $K$ be a non-Archimedean field. Let $X$ be a separated scheme of finite type over $K$.

\begin{definition}[Tropical Dolbeault cohomology]\label{de:dolbeault}
We define the \emph{tropical Dolbeault cohomology} of $X$ to be
\[H^{p,q}_\trop(X)\coloneqq H^{p,q}(X^\an),\]
and the corresponding \emph{tropical Hodge number} of $X$ to be
\[h^{p,q}_\trop(X)=\dim_\bR H^{p,q}_\trop(X).\]
We have the monodromy map
\[\rN_X=\rN_{X^\an}\colon H^{p,q}_\trop(X)\to H^{p-1,q+1}_\trop(X)\]
for $p\geq 1$.
\end{definition}

\begin{conjecture}\label{co:monodromy}
Suppose that $K$ is an algebraically closed non-Archimedean field such that $\widetilde{K}$ is algebraic over a finite field. Let $X$ be a proper smooth scheme over $K$. Then for $p\geq q\geq 0$, the (iterated) monodromy map
\[\rN_X^{p-q}\colon H^{p,q}_\trop(X)\to H^{q,p}_\trop(X)\]
is an isomorphism.
\end{conjecture}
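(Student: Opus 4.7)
The plan is to transfer the problem to the weight spectral sequence attached to a suitable semistable model, where the analogous statement is the Weight-Monodromy Conjecture \ref{co:purity}.

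First, I would use de~Jong's alteration to produce a proper strictly semistable scheme $\cX$ over $(K_0')^\circ$, where $K_0 \subset K$ is a discretely valued complete subfield over which $X$ descends and $K_0'/K_0$ is a suitable finite extension, so that $\cX_\eta$ dominates a model of $X$. The functoriality of $\rN_X$ under proper surjections (Lemma \ref{le:functorial}(4)) should let one transport the desired isomorphism between the tropical cohomology of $\cX_\eta$ and that of $X$, provided one has a Poincar\'e-type duality on $H^{\bullet,\bullet}_\trop$ intertwining $\rN_X^{p-q}$ with its dual; the hypothesis that $\widetilde{K}$ is algebraic over a finite field then makes Proposition \ref{pr:purity} applicable after a base extension.

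The heart of the argument is to extend the comparison of Lemma \ref{le:comparison}, which handles only $q=0$, to all bidegrees. Concretely, for each $p \geq q$ one seeks a natural $\bR$-linear isomorphism identifying $H^{p,q}_\trop(X)$ with (a suitable summand of) the tensor $E_2^{q-p,\, 2p}(p) \otimes_{\bQ_\ell} \bR$, and $H^{q,p}_\trop(X)$ with $E_2^{p-q,\, 2q} \otimes_{\bQ_\ell} \bR$, in such a way that $\rN_X^{p-q}$ becomes the weight-monodromy map $\rN_\cX^{p-q}$ of Conjecture \ref{co:purity}. For $q=0$ this is essentially what underlies the proofs of Corollary \ref{co:injective} and Theorem \ref{th:isomorphic}: via $\sT_X^p$, Lemma \ref{le:comparison}, and \eqref{eq:comparison} one lands on $E_2^{p,0}$, and $\rN_X^p$ is matched with the identity-induced map \eqref{eq:weight}. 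For $q>0$ one must build a tropical resolution of each $E_1$-summand $H^{q-2i}_{\et}(Y^{(p+2i)}_\rs, \bQ_\ell(-i))$ by $\rd''$-closed $(p,q)$-forms supported near the tropical image of the corresponding closed stratum, and check that the induced differentials match the alternating sums $\delta_\bullet^*$ of \eqref{eq:pullback} up to sign.

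Granting this comparison, the conjecture reduces to Conjecture \ref{co:purity} for $\cX$. The principal obstacle is that Conjecture \ref{co:purity} is presently known only in equal characteristic (Proposition \ref{pr:purity}(2)), so the argument would be unconditional only there; the arithmetically most interesting case $K = \widehat{\bQ_p^\ra}$ remains contingent on weight-monodromy in mixed characteristic. A secondary and more concrete obstacle is the comparison for $q > 0$: there is no analogue of $\sT_X^p$ readily computing $H^{p,q}_\trop$ for $q > 0$, so one likely has to work with a hypercovering by semistable charts as in Construction \ref{no:coordinate} and verify descent by hand. Finally, the counterexample over $\widehat{\bC(\!(t)\!)^\ra}$ mentioned in Remark \ref{re:counter} shows that some non-trivial finiteness on $\widetilde{K}$ is essential to any such comparison: without it, $\rN_X$ already fails to be an isomorphism in bidegree $(1,0)\to(0,1)$, so the strategy cannot be purely formal and must genuinely exploit the arithmetic input provided by Proposition \ref{pr:purity}.
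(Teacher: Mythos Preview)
The statement you are attempting to prove is a \emph{conjecture}, not a theorem; the paper does not contain a proof of it. It offers only partial evidence: injectivity of $\rN_X^p$ in the extreme bidegree $(p,0)\to(0,p)$ under arithmetic hypotheses (Corollary \ref{co:injective}), and the full isomorphism for $(1,0)\to(0,1)$ when a strictly semistable model exists over a local field (Theorem \ref{th:isomorphic}). So there is no ``paper's own proof'' to compare against.

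Your plan has a genuine structural gap beyond the conditional reliance on Conjecture \ref{co:purity}. The heart of your strategy is an identification $H^{p,q}_\trop(X)\otimes\bQ_\ell \simeq E_2^{q-p,2p}(p)$ for all $p\geq q$. But the paper explicitly warns against expecting this: as noted at the end of \Sec\ref{ss:1}, for a general strictly semistable $\cX$ the $E_2$-terms involve the \'etale cohomology groups $H^{q-2i}_{\et}(Y_\rs^{(p+2i)},\bQ_\ell(-i))$ of the strata themselves, which are not determined by the dual complex of $\cX$, whereas $H^{p,q}_\trop(X)$ is governed by the tropical/combinatorial data. The comparison you want only has a chance in the very special situation where every stratum has the \'etale cohomology of a point in the relevant degrees (e.g.\ when all strata are products of projective spaces, as in the Mikhalkin--Zharkov setting). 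For $q=0$ the relevant group is $H^0_{\et}(Y^{(p)}_\rs,\bQ_\ell)$, which \emph{is} purely combinatorial, and this is why Lemma \ref{le:comparison} and the argument behind Theorem \ref{th:monodromy} work; for $q>0$ there is no analogue, and your proposed ``tropical resolution of each $E_1$-summand'' cannot exist in general.

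A secondary issue: even in the cases treated, the paper does not get an isomorphism $H^{p,0}_\trop(X)\otimes\bQ_\ell\simeq E_2^{-p,2p}(p)$, only an injection (Lemma \ref{le:e2}). The surjectivity direction in Theorem \ref{th:isomorphic} required an ad hoc construction ($\rT_\cX$) that used finiteness of $\widetilde{K}$ to kill $\Pic^0$ obstructions; extending this to higher $q$ is not addressed by your outline.
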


\begin{remark}\label{re:counter}
Conjecture \ref{co:monodromy} does not hold for arbitrary algebraically closed non-Archimedean fields. In fact, let $X_0$ be the generic fiber of the scheme $\fX$ in \cite{BGS}*{(6.1)}, which is a geometrically connected projective smooth curve over $K_0=\bC(\!(t)\!)$. Take $K=\widehat{K_0^\ra}$ and $X=X_0\otimes_{K_0}K$. Then one can show that $h^{1,0}_\trop(X)=0$ but $h^{0,1}_\trop(X)=2$. This also implies that the canonical pairing $H^{1,0}_\trop(X)\times H^{0,1}_\trop(X)\to H^{1,1}_\trop(X)\xrightarrow{\Tr}\bR$ (where $\Tr$ is the integration map) is not a perfect pairing; in other words, the Poincar\'{e} duality fails.
\end{remark}

\begin{definition}\label{de:semistable_alteration}
Let $X$ be a proper smooth scheme over a discrete non-Archimedean field $K$. A \emph{strict semistable alteration} of $X$ is a proper strictly semistable scheme $\cX$ over $K_\cX^\circ$ for some finite extension $K_\cX/K$ contained in $K^\ra$ together with a proper dominant generically finite morphism $\phi\colon\cX_\eta\to X$ over $K$, such that every irreducible component of $Y^{(r)}$ for $r\geq 0$ is geometrically irreducible.
\end{definition}

\begin{theorem}\label{th:monodromy}
Let $X$ be a proper smooth scheme over a discrete non-Archimedean field $K$ and $p\geq 0$ an integer. Suppose that $\kappa^p_X$ \eqref{eq:comparison} is injective.
\begin{enumerate}
  \item If $\log|K^\times|\subset\bQ$, then the monodromy map $\rN_X^p\colon H^{p,0}_\trop(X)\to H^{0,p}_\trop(X)$ is rational, that is, it sends $H^0(X^\an,\sT_{X^\an}^p)$ into $H^p(X^\an,\bQ)$.

  \item Suppose that for every strict semistable alteration $\cX$ of $X$ (Definition \ref{de:semistable_alteration}), the map
      \[\rN_\cX^p\colon E_2^{-p,2p}(p)\to E_2^{p,0}\]
      is injective. Then $\rN_X^p\colon H^{p,0}_\trop(X)\to H^{0,p}_\trop(X)$ is injective.
\end{enumerate}
\end{theorem}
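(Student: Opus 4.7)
The strategy is to construct in Part~(1) a sheaf-level refinement of $\rN_X^p$ with $\bQ$-coefficients, and then in Part~(2) to compare this refinement with the arithmetic monodromy operator on the weight spectral sequence of a strict semistable alteration.

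For Part~(1), I would produce a canonical morphism $\nu_X^p\colon\sT_X^p\to\underline{\bQ}_{X^\an}[p]$ in the derived category of $\bQ$-sheaves on $X^\an$ whose induced map on $H^0$ agrees, after tensoring with $\bR$, with the restriction of $\rN_X^p$ to $H^0(X^\an,\sT_X^p)$. By Lemma~\ref{le:functorial}(1), the monodromy map sends a local section $\tau(\{f_1,\dots,f_p\})=\rd'x_1\wedge\cdots\wedge\rd'x_p$ (with $x_i=-\log|f_i|$) to the locally $\rd''$-exact form $p!\cdot\rd''x_1\wedge\cdots\wedge\rd''x_p$. Iterating the primitive step across a cover of $X^\an$ produces a Čech cocycle whose entries are locally constant functions of the form $\log|f_{i,\alpha}/f_{i,\beta}|\in\log|K^\times|$; the hypothesis $\log|K^\times|\subset\bQ$ makes these entries rational, and standard homological-algebra yoga assembles them into the morphism $\nu_X^p$.

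For Part~(2), given $\alpha\in H^{p,0}_\trop(X)$ with $\rN_X^p\alpha=0$, I would choose a strict semistable alteration $\phi\colon\cX_\eta\to X$ with $\cX$ proper strictly semistable over $K_\cX^\circ$. By Lemma~\ref{le:functorial}(4), $\rN_{\cX_\eta}^p(\phi^*\alpha)=0$. After reducing to injectivity of $\rN_{\cX_\eta}^p$ via a generic-finiteness pushforward/trace argument for $\phi^*$, the next step is to fit $\rN_{\cX_\eta}^p$ into the commutative diagram
\[
\xymatrix{
H^0(\cX_\eta^\an,\sT^p_{\cX_\eta})\otimes_\bQ\bQ_\ell \ar[r]\ar[d]_-{\rN_{\cX_\eta}^p\otimes 1} & E_2^{-p,2p}(p) \ar[d]^-{\rN_\cX^p} \\
H^p(\cX_\eta^\an,\bQ_\ell) \ar[r]^-{\kappa^p_{\cX_\eta}}_-{\sim} & E_2^{p,0}
}
\]
in which the left vertical arrow is refined via $\nu^p_{\cX_\eta}$ from Part~(1), the bottom arrow is the isomorphism of Lemma~\ref{le:comparison} (using the geometric-irreducibility clause of Definition~\ref{de:semistable_alteration}), the right vertical arrow is injective by hypothesis, and the top horizontal arrow sends a semistable-chart section $\tau(\{g_{i_1},\dots,g_{i_p}\})$ of $\sT^p_{\cX_\eta}$ (Construction~\ref{no:coordinate}) to its natural image in the combinatorial summand $H^0(Y_\rs^{(p)},\bQ_\ell)$ of $E_2^{-p,2p}(p)$. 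A diagram chase then forces injectivity of $\rN_{\cX_\eta}^p\otimes 1$, hence of $\rN_{\cX_\eta}^p$ by faithful flatness, which combined with the injectivity of $\phi^*$ yields the conclusion.

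The main obstacle will be establishing the commutativity of the square in Part~(2): identifying the analytically defined Dolbeault monodromy $\rN_{\cX_\eta}^p$ (Definition~\ref{de:monodromy_local}) with the $\ell$-adic monodromy $\rN_\cX^p$ on the $E_2$-page, the latter induced by the Galois action and explicitly described at the corner by~\eqref{eq:weight}. Part~(1) supplies the bridge by landing $\rN^p$ in $\bQ$-cohomology so that the étale comparison $\kappa^p$ applies; the remaining verification should reduce to a concrete local computation in a semistable chart $(\cV,\{g_0,\dots,g_p\})$, in which both monodromy operators implement the same "collapse an index" operation on the combinatorics of the stratification of the special fiber $Y$.
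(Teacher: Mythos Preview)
Your overall architecture is right and close to the paper's: the heart of the proof is a commutative square linking the analytic monodromy $\rN_X^p$ to the spectral-sequence monodromy $\rN_\cX^p$, with $\kappa^p$ providing the bridge to \'etale cohomology. But two steps in your plan are not secured, and the paper organizes the argument differently precisely to avoid them.

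First, your reduction in Part~(2) asks for a pushforward/trace on tropical Dolbeault cohomology so that $\phi^*\colon H^{p,0}_\trop(X)\to H^{p,0}_\trop(\cX_\eta)$ is injective. No such trace is available off the shelf: in the Chambert--Loir--Ducros framework, pushforward along a proper generically finite map lands in currents, and there is no reason the image of a smooth $(p,0)$-form is again smooth, so $\phi_*\phi^*=\deg\phi$ on $H^{p,0}_\trop$ is not established. The paper sidesteps this entirely. It never leaves $X$: for each $\omega\in H^0(X^\an,\sT^p_{X^\an})$ it introduces the subspace $H^0(X^\an,\sT^p_{X^\an})_\cX$ of classes \emph{presented} by $\cX$ (a notion defined in terms of local expressions $\sum c_l\{f_{l1},\dots,f_{lp}\}$ with $|f_{lk}|=1$ on the open stratum), constructs an explicit injection $\ord\colon H^0(X^\an,\sT^p_{X^\an})_\cX\hookrightarrow E_2^{-p,2p}(p)$ by reading off orders of vanishing along codimension-$p$ strata (Lemmas~\ref{le:order}, \ref{le:e2}), and places $\rN_X^p$ itself (not $\rN_{\cX_\eta}^p$) in the square
\[
\xymatrix{
H^0(X^\an,\sT_{X^\an}^p)_\cX \ar[r]^-{\ord}\ar[d]_-{\rN_X^p} & E_2^{-p,2p}(p) \ar[d]^-{\rN_\cX^p} \\
H^p(X^\an,\bQ) \ar[r] & E_2^{p,0}.
}
\]
The injectivity of $\phi^*$ is then used only on \'etale cohomology (where the projection formula is available), inside the bottom arrow. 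A separate lemma (Lemma~\ref{le:presentation}, using Payne's inverse-limit theorem and de~Jong alterations) shows that every $\omega$ lies in some $H^0(X^\an,\sT^p_{X^\an})_\cX$, so the union of these subspaces exhausts $H^0(X^\an,\sT^p_{X^\an})$.

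Second, your description of the top arrow (``sends a semistable-chart section $\tau(\{g_{i_1},\dots,g_{i_p}\})$ to its image in $H^0(Y^{(p)}_\rs,\bQ_\ell)$'') and of the \v Cech construction in Part~(1) (``entries of the form $\log|f_{i,\alpha}/f_{i,\beta}|$'') is accurate for $p=1$ but too casual for $p\geq 2$. A global section of $\sT^p_{\cX_\eta}$ is not globally of that chart form, and the iterated $\rd''$-primitives needed to produce a rational \v Cech $p$-cocycle involve a nontrivial inductive construction: the paper carries this out via star-shaped integration on the simplices of the dual complex, culminating in the computation of the Appendix (Proposition~\ref{pr:star}), which produces the exact sign and combinatorial constant $(-1)^{p(p+1)/2}/p!$ matching $\rN_X^p=p!\cdot\rJ$. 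You correctly flag commutativity as the main obstacle, but you should expect this step to be substantially more work than a single local check in a chart.
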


We need some preparation before the proof. The case $p=0$ is trivial. So we assume $p\geq 1$. The following notation will be used later.

\begin{notation}\label{no:tropicalization}
Let $h\colon T'\to T$ be a homomorphism of $K$-analytic torus. Then we denote by $h^\flat\colon T'_\trop\to T_\trop$ the induced linear map under tropicalization.
\end{notation}

Let $\cX$ be a proper strictly semistable scheme over $K_\cX^\circ$. We have the reduction map
\[\pi_\cX\colon\cX_\eta^\an\to Y.\]

Let $\omega$ be an element of $H^0(X^\an,\sT_{X^\an}^p)$. We say that a strict semistable alteration $\cX$ of $X$ (with a morphism $\phi\colon\cX_\eta\to X$) \emph{presents} $\omega$ if for every irreducible component $Y^i$ of $Y$, there exist
\begin{itemize}
  \item an open subset $U$ of $\cX_\eta^\an$ containing $\pi_\cX^{-1}Y^i$,

  \item $c_l\in\bQ$ for $1\leq l\leq M$ with some integer $M=M_U\geq 1$,

  \item $f_{lk}\in\sO^*_{\cX_\eta^\an}(U)$ for $1\leq l\leq M$ and $1\leq k\leq p$ satisfying $|f_{lk}|=1$ on $\pi_\cX^{-1}(Y^i\setminus Y^{[1]})$,
\end{itemize}
such that $\phi^{\an*}\omega\res_U=\tau\(\sum_{l=1}^Mc_l\{f_{l1},\dots,f_{lp}\}\)$. We call such data $(U,\{c_l\},\{f_{lk}\})$ a \emph{presentation} of $\omega$ on $Y^i$.

Now let $Z$ be an irreducible component of $Y^I$ for some $I=\{i_0<\cdots <i_p\}$ and $p\geq 1$. Choose a presentation $(U,\{c_l\},\{f_{lk}\})$ of $\omega$ on $Y^i$. For $j=1,\dots,p$, let $a_{lkj}\in\bZ$ be the order of zero (or the negative order of pole) of $f_{lk}$ along the connected component of $Y^{i_0}\cap Y^{i_j}$ containing $Z$.

\begin{lem}\label{le:order}
Let notation be as above. The rational number $\sum_{l=1}^Mc_l\det(a_{lkj})_{k,j=1}^p$ depends only on $\omega$ and $Z$.
\end{lem}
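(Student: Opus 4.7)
The plan is to reinterpret the scalar $\sum_{l=1}^M c_l\det(a_{lkj})_{k,j=1}^p$ intrinsically as the coefficient of the top wedge $\rd'x_1\wedge\cdots\wedge\rd'x_p$ in the local expansion of $\phi^{\an*}\omega$ in a semistable chart at a suitably chosen point over $Z$. Since $\phi^{\an*}\omega$ is intrinsic to $\omega$ and the relevant tropical coordinates are essentially canonical, independence from the chosen presentation is then automatic.

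First I would fix a point $z\in\cX_\eta^\an$ whose reduction $\pi_\cX(z)$ is the generic point $\eta_Z$ of $Z$; such a $z$ exists by standard Berkovich reduction theory, and any presentation $(U,\{c_l\},\{f_{lk}\})$ of $\omega$ on $Y^{i_0}$ satisfies $z\in\pi_\cX^{-1}(Y^{i_0})\subset U$. Next, by Construction~\ref{no:coordinate}, pick a semistable chart $(\cV,\{g_0,\dots,g_p\})$ at $\eta_Z$ whose parameters are indexed so that $(g_j)=\cV\cap Y^{i_j}$ for $j=0,\dots,p$, and set $x_j\coloneqq-\log|g_j|$. Two such charts at $\eta_Z$ with the same ordering differ by replacing each $g_j$ by $v_jg_j$ for units $v_j$ with $|v_j|\equiv 1$ near $z$, so $(x_0,\dots,x_p)$ and, in particular, the top wedge $\rd'x_1\wedge\cdots\wedge\rd'x_p$ are intrinsic to $z$ and to the ordered tuple $(i_0,\dots,i_p)$, hence to $Z$.

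The heart of the argument is a normalization of the presentation data near $z$. On a sufficiently small analytic neighborhood $V$ of $z$ inside $U\cap\cV_\eta^\an$, I would write
\[
f_{lk}=u_{lk}\cdot g_0^{b_{lk,0}}\cdot g_1^{a_{lk,1}}\cdots g_p^{a_{lk,p}},
\]
where the exponents are the orders of $f_{lk}$ along the divisors $Y^{i_j}$ at $\eta_Z$ (so the $a_{lk,j}$ for $j\geq 1$ agree with those of the lemma) and $u_{lk}$ is a unit. The condition $|f_{lk}|\equiv 1$ on $\pi_\cX^{-1}(Y^{i_0}\setminus Y^{[1]})$, combined with $|g_0|=|\varpi|$ and $|g_j|\equiv 1$ for $j\geq 1$ on that locus, forces $b_{lk,0}=0$ and $|u_{lk}|\equiv 1$ on the generic stratum at $Y^{i_0}$; the upgrade to $|u_{lk}|\equiv 1$ on all of $V$ uses that $u_{lk}$ reduces to a unit at $\eta_Z$ and the openness of the locus $\{|u_{lk}|=1\}$. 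Consequently $-\log|f_{lk}|=\sum_{j=1}^p a_{lk,j}\,x_j$ on $V$, whence
\[
\phi^{\an*}\omega\big|_V=\tau\Bigl(\sum_{l=1}^M c_l\{f_{l1},\dots,f_{lp}\}\Bigr)\Big|_V=\Bigl(\sum_{l=1}^M c_l\det(a_{lkj})_{k,j=1}^p\Bigr)\rd'x_1\wedge\cdots\wedge\rd'x_p.
\]
Since the left-hand side depends only on $\omega$ and $z$ and the top wedge on the right depends only on $Z$, so does the displayed coefficient.

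The main obstacle is the normalization step above: promoting $|u_{lk}|=1$ from the generic stratum at $Y^{i_0}$ to a full open analytic neighborhood of $z$, and thereby killing the potential contribution $\rd'(-\log|u_{lk}|)$ in the computation of $\rd'(-\log|f_{lk}|)$. This requires reconciling the purely analytic data $f_{lk}\in\sO^*_{\cX_\eta^\an}(U)$ with an algebraic factorization on the semistable chart, ultimately relying on the structure of the formal completion of $\cX$ along $Y$ at $\eta_Z$ and the continuity of $|\cdot|$ on the Berkovich space.
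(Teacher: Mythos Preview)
Your approach is essentially identical to the paper's: restrict to a semistable chart $(\cV,\{g_0,\dots,g_p\})$ at $\eta_Z$, normalize each $f_{lk}$ by the monomial $\prod_{j\ge 1} g_j^{a_{lkj}}$, observe that the quotient has norm $1$ on the tube over $V=\cV\cap Y^{i_0}$, and read off $\sum_l c_l\det(a_{lkj})$ as the coefficient of $\tau(\{g_1,\dots,g_p\})$ in $\phi^{\an*}\omega$ there. The paper's write-up is slightly leaner in that it omits the $g_0^{b_{lk,0}}$-factor (unnecessary, since $f_{lk}$ is already invertible on all of $\pi_\cX^{-1}Y^{i_0}$) and handles your ``main obstacle'' in one line by noting that the reduction $\widetilde{f'_{lk}}$ is an invertible regular function on $V$, which immediately gives $|f'_{lk}|=1$ on $\pi_\cX^{-1}V$. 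Your appeal to ``openness of the locus $\{|u_{lk}|=1\}$'' is not correct as stated (that locus is closed, being $\{|u_{lk}|\le 1\}\cap\{|u_{lk}^{-1}|\le 1\}$), but the reduction-to-a-unit argument you also mention is exactly the right fix and is what the paper uses.
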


\begin{proof}
Let $\eta_Z$ be the generic point of $Z$. Then $I(\eta_Z)=\{i_0<\cdots <i_p\}$ (Notation \ref{no:intersection}). We choose a semistable chart $(\cV,\{g_0,\dots,g_p\})$ of $\eta_Z$ (Construction \ref{no:coordinate}). Put $V\coloneqq\cV\cap Y^{i_0}$, and replace $U$ by $U\cap\cV_\eta^\an$ which is an open neighborhood of $\pi_\cX^{-1}V$. We regard $g_1,\dots,g_p$ as elements in $\sO_{\cX_\eta^\an}^*(U)$. Then for $1\leq j\leq p$, $|g_j|=1$ on $\pi_\cX^{-1}(V\setminus Y^{[1]})$ and the divisor associated to the reduction $\widetilde{g_j}$ on $V$ is $V\cap Y^{i_j}$.

Note that the divisor associated to $\widetilde{f_{lk}}\res_V$ is supported on $V\cap(\bigcup_{j=1}^pY^{i_j})$. Put
\[f'_{lk}=f_{lk}\cdot\prod_{j=1}^pg_j^{-a_{lkj}}.\]
Then the reduction $\widetilde{f'_{lk}}$ is invertible on $V$. In other words, $|f'_{lk}|=1$ on $\pi_\cX^{-1}V$. Thus, $\tau(f'_{lk})=0$ on $\pi^{-1}_\cX V$. An elementary calculation shows that
\begin{align}\label{eq:order}
\phi^{\an*}\omega\res_{\pi^{-1}_\cX V}=
\tau\(\sum_{l=1}^Mc_l\{f_{l1},\dots,f_{lp}\}\)=\(\sum_{l=1}^Mc_l\det(a_{lkj})_{k,j=1}^p\)\tau(\{g_1,\dots,g_p\}).
\end{align}
It is clear that $\tau(\{g_1,\dots,g_p\})$ is nonzero as $V$ contains $\eta_Z$. The lemma then follows.
\end{proof}

We denote the rational number in the above lemma by $\ord_\omega(Z)$. The assignment $Z\mapsto\ord_\omega(Z)$ gives rise to an element $\ord_\omega\in H^0_{\et}(Y^{(p)}_\rs,\bQ_\ell)$. Denote by $H^0(X^\an,\sT_{X^\an}^p)_\cX$ the subset of $H^0(X^\an,\sT_{X^\an}^p)$ consisting of $\omega$ which $\cX$ presents. Then it is easy to see that $H^0(X^\an,\sT_{X^\an}^p)_\cX$ is a $\bQ$-subspace.

\begin{lem}\label{le:e2}
Let $\ord\colon H^0(X^\an,\sT_{X^\an}^p)_\cX\to H^0_{\et}(Y^{(p)}_\rs,\bQ_\ell)$ be the map sending $\omega$ to $\ord_\omega$.
\begin{enumerate}
  \item The map $\ord$ is injective.
  \item The image of $\ord$ is contained in the subspace $E_2^{-p,2p}(p)\subset H^0_{\et}(Y^{(p)}_\rs,\bQ_\ell)$ \eqref{eq:weight}.
\end{enumerate}
\end{lem}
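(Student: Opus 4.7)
The plan is to verify both parts by a chart-by-chart local analysis on $\cX$, building on the identity~\eqref{eq:order}. For (1), suppose $\ord_\omega=0$; the goal is to deduce $\omega=0$. Since the analytification of a proper dominant generically finite morphism is surjective, the pullback $\phi^{\an*}$ on sections of $\sT^p$ is injective, so it suffices to prove $\phi^{\an*}\omega=0$ on $\cX_\eta^\an$. I would cover $\cX$ by semistable charts $\cV$ at points $y\in Y$ of varying depth $r+1=|I(y)|$ and show the vanishing on each $\pi_\cX^{-1}(\cV)$. The key local observation, extracted from the proof of Lemma~\ref{le:order}, is that a unit $f$ on $\pi_\cX^{-1}(\cV)$ decomposes as $u\prod_j g_j^{a_j}$ with $|u|=1$; applying this to each $f_{lk}$ in a presentation reduces $\phi^{\an*}\omega$ on $\pi_\cX^{-1}(\cV)$ to a $\bQ$-linear combination of elementary symbols $\tau(\{g_{j_1},\dots,g_{j_p}\})$ supported on the $r$-simplex $\{x_i\geq 0,\ \sum x_i=\val(\varpi)\}$.

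Three cases then arise. When $r+1\leq p$, the single linear relation $\sum_j\rd'x_j=0$ coming from $\prod g_j=\varpi$ forces all such $p$-forms on the $r$-simplex to vanish, so $\phi^{\an*}\omega$ automatically vanishes on this chart. When $r+1=p+1$, equation~\eqref{eq:order} identifies the combination as $\ord_\omega(Z)\tau(\{g_1,\dots,g_p\})$, which vanishes by the hypothesis $\ord_\omega=0$. When $r+1>p+1$, restricting to codim-$(p+1)$ sub-charts at $\eta_Z$ for components $Z\in Y^{(p)}$ passing through $y$ (i.e.\ to $p$-dimensional faces of the $r$-simplex) expresses each coefficient in the combination, via~\eqref{eq:order}, as a signed multiple of $\ord_\omega(Z)$, all of which vanish. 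Piecing these vanishings together over the cover yields $\phi^{\an*}\omega=0$, hence $\omega=0$.

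For (2), I would verify the two conditions defining $E_2^{-p,2p}(p)$ separately. The condition $\delta_p^*(\ord_\omega)=0$ at a component $W$ of $Y^{(p+1)}$ follows from the same chart analysis applied at $\eta_W$: the chart has $p+2$ coordinates, and matching $\phi^{\an*}\omega$ across the $p+2$ codim-one faces of the $(p+1)$-simplex via~\eqref{eq:order} together with the single relation $\sum_j\rd'x_j=0$ forces the alternating sum $\sum_i\epsilon(I(Z_i),I(W))\ord_\omega(Z_i)$ over the components $Z_i\in Y^{(p)}$ whose closures contain $W$ to vanish, which is precisely $\delta_p^*(\ord_\omega)(W)=0$.

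The hard part will be the condition $\delta_{p*}(\ord_\omega)=0$. For a component $W$ of $Y^{(p-1)}$ it demands that the cycle class $\sum_{Z\subset W}\epsilon(I(W),I(Z))\ord_\omega(Z)[Z]_W$ vanish in $H^2_\et(W_\rs,\bQ_\ell(1))$. My strategy is to pick $i_0\in I(W)$ and a presentation on $Y^{i_0}$, view the reductions $\widetilde{f_{lk}}|_W$ as rational functions on $W$ with divisors supported on $W\cap Y^{[p]}=\bigcup_{Z\subset W}Z$, and assemble a tame-symbol-style determinant expression in these $\widetilde{f_{lk}}|_W$ whose divisor equals the desired combination $\sum_Z\epsilon(I(W),I(Z))\ord_\omega(Z)[Z]_W$; being a principal divisor, its cycle class then vanishes. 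The technical difficulties are tracking signs precisely, verifying that the expression descends to a single-valued rational function on $W$ independent of local chart choices, and handling the $p-1$ surviving chart directions consistently.
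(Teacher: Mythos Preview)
Your plan for part~(1) and for the $\delta_p^*$-half of part~(2) is correct and matches the paper's argument closely: both reduce to the fact that a constant-coefficient $p$-form on an $r$-simplex is determined by its restrictions to the $p$-faces, and these restrictions are exactly the $\ord_\omega(Z)$ (up to sign) by~\eqref{eq:order}.

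The gap is in your treatment of $\delta_{p*}\ord_\omega=0$. You propose to ``view the reductions $\widetilde{f_{lk}}|_W$ as rational functions on $W$'' and build a principal divisor from them. But in general $\widetilde{f_{lk}}|_W$ is not well-defined: the presentation condition only forces $\widetilde{f_{lk}}$ to be invertible on $Y^{i_0}\setminus Y^{[1]}$, so $\widetilde{f_{lk}}$ may have zeros or poles along the components $W_j$ of $Y^{i_0}\cap Y^{i_j}$ (for $i_j\in I(W)\setminus\{i_0\}$) that contain $W$, in which case its restriction to $W$ is identically $0$ or $\infty$. Your ``tame-symbol-style determinant'' does not get off the ground without first resolving this.

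The paper's fix is a triangularization trick you are missing. For each fixed $l$, one may perform integer column operations on the $p$-tuple $(f_{l1},\dots,f_{lp})$ --- replacing $f_{lk}$ by $\prod_{k'}f_{lk'}^{m_{kk'}}$ with $(m_{kk'})\in\GL_p(\bZ)$ --- without changing $\tau(\{f_{l1},\dots,f_{lp}\})$ up to sign. This acts on the $(p-1)\times p$ integer matrix $(\ord_{W_j}(\widetilde{f_{lk}}))_{j,k}$ by right multiplication, so one can arrange $\ord_{W_j}(\widetilde{f_{lk}})=0$ for $j<k$; in particular the last entry $\widetilde{f_{lp}}$ has no order along any $W_j$ and restricts to a genuine rational function $f_{lp}^W$ on $W$. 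With this normalization the determinant in~\eqref{eq:order} collapses to $(\prod_{j=1}^{p-1}\ord_{W_j}(\widetilde{f_{lj}}))\cdot\ord_Z(f_{lp}^W)$, and summing over $Z\subset W$ yields a rational multiple of $\DIV(f_{lp}^W)$, which is principal. This single step replaces your vague ``determinant expression'' and eliminates the descent and sign difficulties you flagged.
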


\begin{proof}
For (1), take an element $\omega\in H^0(X^\an,\sT_{X^\an}^p)_\cX$. Let $x\in\cX_\eta^\an$ be a point, and put $y=\pi_\cX(x)\in Y$. Suppose that $I(y)=\{i_0<\cdots<i_r\}$ (Notation \ref{no:intersection}) for some $r\geq 0$, and choose a semistable chart $(\cV,\{g_0,\dots,g_r\})$ of $y$ (Construction \ref{no:coordinate}). Put $V\coloneqq\cV\cap Y^{i_0}$.

Choose a presentation $(U,\{c_l\},\{f_{lk}\})$ of $\omega$ on $Y^{i_0}$. We replace $U$ by $U\cap\cV_\eta^\an$, and view $g_1,\dots,g_r$ as in $\sO_{\cX_\eta^\an}^*(U)$. Then there exist unique integers $a_{lkj}$ such that if we put $f'_{lk}=\prod_{j=1}^rg_j^{a_{lkj}}$, then $|f'_{lk}|=|f_{lk}|$ on $\pi_\cX^{-1}V$. Let $f$ (resp.\ $f'$) be the moment map $\pi_\cX^{-1}V\to(\bG_m^\an)^{Mp}$ induced by $\{f_{lk}\}$ (resp.\ $\{f'_{lk}\}$). Then $f_\trop=f'_\trop$. In particular,
\[
\phi^{\an*}\omega\res_{\pi_\cX^{-1}V}=\tau\(\sum_{l=1}^Mc_l\{f'_{l1},\dots,f'_{lp}\}\).
\]
Let $g\colon\pi_\cX^{-1}V\to(\bG_m^\an)^r$ be the moment map induced by $\{g_1,\dots,g_r\}$. Then there exists a unique homomorphism $h\colon(\bG_m^\an)^r\to(\bG_m^\an)^{Mp}$ determined by integers $\{a_{lkj}\}$ such that $f_\trop=(h\circ g)_\trop$. Now if $\ord_{\omega}(Z)=0$ for every irreducible component $Z$ of $Y^{[p]}$, then the pullback of $\tau\(\sum_{l=1}^Mc_l\{f'_{l1},\dots,f'_{lp}\}\)$ under $h^\flat$ (Notation \ref{no:tropicalization}) is zero. In other words, we have $\phi^{\an*}\omega\res_{\pi_\cX^{-1}V}=0$. Thus the map $\ord\colon H^0(X^\an,\sT_{X^\an}^p)_\cX\to H^0_{\et}(Y^{(p)}_\rs,\bQ_\ell)$ is injective since $\pi_\cX^{-1}V$ is a neighborhood of $x$ and $x$ is arbitrary; (1) follows.

For (2), we need to show that $\delta_{p*}\ord_\omega=\delta_p^*\ord_\omega=0$. Suppose that $Z$ is an irreducible component of $Y^I$ for some $I=\{i_0<\cdots <i_p\}$. For every permutation $\sigma$ of the set $\{0,\dots,p\}$, we may define in the same way a rational number $\ord_\omega^\sigma(Z)$ by replacing $i_j$ by $i_{\sigma(j)}$. If $\sigma(0)=0$, then we let $\epsilon(\sigma)\in\{\pm1\}$ be the signature of the permiutation $\sigma\res_{\{1,\dots,p\}}$. If $\sigma(0)\neq 0$, then we let $\epsilon(\sigma)\in\{\pm1\}$ be the negative of the signature of the permutation from $\{1,\dots,0,\dots,p\}$ ($\sigma(0)$ is replaced by $0$) to $\{\sigma(1),\dots,\sigma(p)\}$. Then the proof of Lemma \ref{le:order} implies that
\begin{align}\label{eq:e2}
\ord_\omega^\sigma(Z)=\epsilon(\sigma)\cdot\ord_\omega(Z).
\end{align}

We start from $\delta_{p*}\ord_\omega$. Fix an irreducible component $W$ of $Y^J$ for some $J=\{i_0<\cdots<i_{p-1}\}$. For $1\leq j\leq p-1$, let $W_j$ be the unique irreducible component of $Y^{i_0}\cap Y^{i_j}$ that contains $W$. Choose a presentation $(U,\{c_l\},\{f_{lk}\})$ of $\omega$ on $Y^{i_0}$. By linear algebra, it is easy to see that we may choose $f_{lk}$ such that $\ord_{W_j}(\widetilde{f_{lk}})=0$ if $j<k$. In particular, the restriction of $\widetilde{f_{lp}}$ on $W$ is a nonzero rational function, which we denote by $f_{lp}^W$. Put $b_{l,j}=\ord_{W_j}(\widetilde{f_{lj}})$ for $1\leq j\leq p-1$. Let $Z$ be an irreducible component of $W\cap Y^{[p]}$. Then there is a unique element $i_p\in\{1,\dots,m\}\setminus\{i_0,\dots,i_{p-1}\}$ such that $Z\subset W\cap Y^{i_p}$. Let $0\leq \epsilon_Z\leq p$ be the integer such that there are exactly $\epsilon_Z$ elements in $\{i_0,\dots,i_{p-1}\}$ that are greater than $i_p$. Then by \eqref{eq:e2}, we have
\[
\ord_\omega(Z)=(-1)^{\epsilon_Z}\sum_{l=1}^M c_l b_{l,1}\cdots b_{l,p-1} \ord_Z(f_{lp}^W).
\]
Therefore, we have the equality
\[
(-1)^{p-\epsilon_Z}\sum_{Z\subset W}\ord_\omega(Z)Z=(-1)^p\sum_{l=1}^M c_l b_{l,1}\cdots b_{l,p-1}\DIV(f_{lp}^W)
\]
of divisors on $W$. Thus $\delta_{p*}\ord_\omega=0$ by \eqref{eq:pushforward}.

Now we consider $\delta_p^*\ord_\omega$. Let $W$ be an irreducible component of $Y^J$ for some $J=\{i_0<\cdots<i_{p+1}\}$. For $0\leq j\leq p+1$, let $Z_j$ be the unique irreducible component of $Y^{J\setminus\{i_j\}}$ containing $W$. Let $(\cV,\{g_0,\dots,g_{p+1}\})$ be a semistable chart of the generic point of $W$ (Construction \ref{no:coordinate}). Put $V=\cV\cap Y$ and $V_j=V\setminus Y^{i_j}$. From the proof for (1), we know that
\[\phi^{\an*}\omega\res_{\pi_\cX^{-1}V}=\sum_{\alpha}c_\alpha\tau(\{g_{\alpha(1)},\dots,g_{\alpha(p)}\})\]
for some $c_\alpha\in\bQ$, where the sum is taken over all strictly increasing maps $\alpha\colon\{1,\dots,p\}\to\{0,\dots,p+1\}$. We take such a map $\alpha$. Now let $\alpha_0<\alpha_1$ be the two integers in $\{0,\dots,p+1\}$ not in the image of $\alpha$. We have three cases:
\begin{itemize}
  \item If $j\in\IM(\alpha)$, then $\tau(\{g_{\alpha(1)},\dots,g_{\alpha(p)}\})\res_{\pi_\cX^{-1}V_j}=0$.

  \item If $j=\alpha_0$, then $\tau(\{g_{\alpha(1)},\dots,g_{\alpha(p)}\})\res_{\pi_\cX^{-1}V_j}=(-1)^{\alpha_1-1}\tau(\{g_{\alpha'(1)},\dots,g_{\alpha'(p)}\})$, where $\alpha'$ is the unique map whose image does not include $\alpha_0$ and $\min(\{0,\dots,p+1\}\setminus\{\alpha_0\})$.

  \item If $j=\alpha_1$, then $\tau(\{g_{\alpha(1)},\dots,g_{\alpha(p)}\})\res_{\pi_\cX^{-1}V_j}=(-1)^{\alpha_0}\tau(\{g_{\alpha'(1)},\dots,g_{\alpha'(p)}\})$, where $\alpha'$ is the unique map whose image does not include $\{0,\alpha_1\}$.
\end{itemize}
By \eqref{eq:order}, we have
\[
\ord_\omega(Z_j)=\sum_{\alpha,\alpha_0=j}c_\alpha(-1)^{\alpha_1-1}+\sum_{\alpha,\alpha_1=j}c_\alpha(-1)^{\alpha_0}.
\]
However, by \eqref{eq:pullback}, we have
\[
\delta_p^*\omega\res_W=\sum_{j=0}^{p+1}(-1)^j\ord_\omega(Z_j)
=\sum_\alpha c_\alpha(-1)^{\alpha_0+\alpha_1-1}+c_\alpha(-1)^{\alpha_0+\alpha_1}=0.
\]
Thus (2) follows.
\end{proof}

The map $\ord$ in Lemma \ref{le:e2} is inspired and closely related to the construction in \cite{Liu}*{\Sec 4}. The following lemma is the key step to the proof of Theorem \ref{th:monodromy}. It also justify the terminology \emph{monodromy map} for $\rN_X$ in Definition \ref{de:monodromy} and Definition \ref{de:dolbeault}.

\begin{lem}\label{le:rational}
Let $\cX$ be a strict semistable alteration of $X$, with $\phi\colon\cX_\eta\to X$. Suppose $\log|\varpi|=-1$ for a uniformizer $\varpi$ of $K$. Then the image of the restriction of $\rN_X^p\colon H^{p,0}_\trop(X)\to H^{0,p}_\trop(X)$ to $H^0(X^\an,\sT_{X^\an}^p)_\cX$ is contained in $H^p(X^\an,\bQ)$. Moreover, the following diagram
\begin{align}\label{eq:rational}
\xymatrix{
H^0(X^\an,\sT_{X^\an}^p)_\cX \ar[r]^-{\ord}\ar[d]_-{\rN_X^p} & E_2^{-p,2p}(p) \ar[d]^-{\rN_\cX^p} \\
H^p(X^\an,\bQ) \ar[r] & E_2^{p,0}
}
\end{align}
commutes. Here, the bottom map is the composition of
\begin{itemize}
  \item the multiplication map $(-1)^{p(p+1)/2}\colon H^p(X^\an,\bQ)\to H^p(X^\an,\bQ_\ell)$,

  \item the map $\kappa_X^p\colon H^p(X^\an,\bQ_\ell)\to H^p_{\et}(X_\ra,\bQ_\ell)$ \eqref{eq:comparison} (which is assumed to be injective), and

  \item the pullback map $\phi^*\colon H^p_{\et}(X_\ra,\bQ_\ell)\to H^p_{\et}(\cX_{\eta,\ra},\bQ_\ell)$.
\end{itemize}
The image of the bottom map is contained in $E_2^{p,0}$ by Lemma \ref{le:comparison}.
\end{lem}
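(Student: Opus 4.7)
My plan is to construct, from a presentation of $\omega$ by $\cX$, an explicit Čech $p$-cocycle on $\cX_\eta^\an$ with rational values representing $\phi^{\an*}[\rN_X^p\omega]$, and to identify this cocycle under Lemma \ref{le:comparison} with $(-1)^{p(p+1)/2}\rN_\cX^p\ord_\omega\in E_2^{p,0}$. Rationality on $\cX_\eta^\an$ then descends to $X^\an$ via the pushforward $\phi^{\an}_*$ along the proper generically finite morphism $\phi^\an$: the composition $\phi^{\an}_*\phi^{\an*}$ is multiplication by $\deg\phi\in\bZ_{>0}$ on $H^p(X^\an,\bQ)\otimes\bR$, so rationality of $\phi^{\an*}[\rN_X^p\omega]$ forces rationality of $[\rN_X^p\omega]$. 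The case $p=0$ is trivial, so I assume $p\geq 1$.

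On the open set $U_i\supset\pi_\cX^{-1}Y^i$ carrying a presentation $\phi^{\an*}\omega|_{U_i}=\tau(\sum_l c_{l,i}\{f_{l1,i},\ldots,f_{lp,i}\})$, Lemma \ref{le:functorial}(1) and (4) give
\[
\phi^{\an*}(\rN_X^p\omega)|_{U_i}=p!\,\sum_l c_{l,i}\,\rd''x_{l,1}^{(i)}\wedge\cdots\wedge\rd''x_{l,p}^{(i)},\qquad x_{l,k}^{(i)}:=-\log|f_{lk,i}|,
\]
which is $\rd''$-exact via the identity $\rd''(x_1\rd''x_2\wedge\cdots\wedge\rd''x_p)=\rd''x_1\wedge\cdots\wedge\rd''x_p$. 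Using the fine resolution property of $(\sA^{0,\bullet}_{\cX_\eta^\an},\rd'')$ from \cite{Jel16}*{Corollary 4.6}, I iteratively descend through the Čech--Dolbeault double complex for a sufficient refinement of $\{U_i\}$, in $p$ steps, to a Čech $p$-cocycle $c$ valued in locally constant $\bR$-valued functions. To evaluate $c$ on a component $Z$ of $Y^{\{i_0,\ldots,i_p\}}$, I use a semistable chart $(\cV,\{g_0,\ldots,g_p\})$ at the generic point of $Z$. The presentation condition $|f_{lk,i_\mu}|=1$ on $\pi_\cX^{-1}(Y^{i_\mu}\setminus Y^{[1]})$, combined with $\log|\varpi|=-1$ and $\sum_j x_j=1$ for $x_j:=-\log|g_j|$, forces $|f_{lk,i_\mu}|=\prod_{j\neq\mu}|g_j|^{a^{(\mu)}_{lkj}}$ on $\pi_\cX^{-1}V^{i_\mu}$. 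Each intermediate primitive then becomes a polynomial in the $x_j$ with rational coefficients; a direct linear-algebra calculation parallel to Lemma \ref{le:order}, invoking the antisymmetry \eqref{eq:e2}, evaluates $c(Z)=(-1)^{p(p+1)/2}\ord_\omega(Z)$. Hence $c$ takes values in $\bQ$. Under the Berkovich comparison of Lemma \ref{le:comparison}, which identifies the Čech complex of $\{U_i\}$ on $\cX_\eta^\an$ with that of the closed cover $\{Y^i\}$ of $Y$---i.e., with the complex computing $E_2^{p,0}$ via \eqref{eq:weight}---the cocycle $c$ corresponds to $(-1)^{p(p+1)/2}\rN_\cX^p\ord_\omega$, establishing the commutativity of \eqref{eq:rational}.

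The main obstacle is sign bookkeeping through the $p$-step Čech--Dolbeault descent: signs accumulate from the primitive identity above, from the Čech coboundary convention, and from the permutation antisymmetry \eqref{eq:e2}, and one must verify that they combine to exactly $(-1)^{p(p+1)/2}$. A potentially cleaner alternative is to work directly on the skeleton $S(\cX)\subset\cX_\eta^\an$, a polyhedral complex homeomorphic to the dual complex of $Y$, on which the relevant tropical Dolbeault forms descend and the identification with the simplicial cohomology of the dual complex---hence with $E_2^{p,0}$---is built in from the start.
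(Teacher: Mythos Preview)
Your overall architecture matches the paper's: Čech--Dolbeault descent on $\cX_\eta^{\an}$ with respect to the cover $\{U^i=\pi_\cX^{-1}Y^i\}$, producing a rational Čech $p$-cocycle whose class, via Lemma~\ref{le:comparison}, lands in $E_2^{p,0}$ and is compared with $\rN_\cX^p\ord_\omega$. The rationality descent to $X^{\an}$ via $\phi_*\phi^*=\deg\phi$ is also exactly what the paper does.

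The substantive difference, and the genuine gap in your proposal, is in how the primitives are chosen and how the final sign is computed. You take the asymmetric primitive $x_1\,\rd''x_2\wedge\cdots\wedge\rd''x_p$ at the first step and then assert that subsequent steps are ``a direct linear-algebra calculation parallel to Lemma~\ref{le:order}'' yielding $c(Z)=(-1)^{p(p+1)/2}\ord_\omega(Z)$. This is precisely where the paper invests its real work. The paper instead tropicalizes each $U_Z$ to a simplicial complex $C_Z$, takes primitives by \emph{star-shaped integration with respect to the barycenter} $P_Z$ of the minimal cell, and then reduces the entire descent to a closed-form identity on the standard simplex (Proposition~\ref{pr:star} in the appendix). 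That proposition is what produces the constant $(-1)^{p(p+1)/2}/p!$ and simultaneously verifies that the intermediate cocycles $\delta\vartheta_{r-1}$ are closed with constant coefficients, so the iteration is well-defined. Your asymmetric primitives do not obviously remain in the class of constant-coefficient forms after each Čech coboundary, and tracking the sign through $p$ steps of ad hoc antiderivatives is exactly the ``main obstacle'' you flag but do not resolve. In short, the paper's barycentric device is not merely a cleanliness choice; it is the mechanism that makes the sign computation tractable and the inductive closedness automatic.

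Your closing alternative---work on the skeleton $S(\cX)$---is effectively what the paper implements: the maps $f_{Z,\trop}$ and $h_I^\flat$ pull everything back to standard simplices, and Proposition~\ref{pr:star} is the skeleton-side computation. So you have correctly identified the right framework; what remains is to actually carry out the simplex computation, which is the content of the appendix.
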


\begin{proof}
To simplify notation, put $\sA_\bullet^{p,q,\cl}\coloneqq\Ker[\rd''\colon\sA_\bullet^{p,q}\to \sA_\bullet^{p,q+1}]$. Take an element $\omega\in H^0(X^\an,\sT_{X^\an}^p)_\cX$. By Lemma \ref{le:functorial} (1), $\rN_X^p(\omega)$ is represented by the Dolbeault representative $p!\rJ\omega\in\rH^0(X^\an,\sA_{X^\an}^{0,p,\cl})$.

Suppose that \eqref{eq:rational} commutes. By the projection formula, we have $\phi_*\circ\phi^*=[K_\cX(\cX_\eta):K(X)]\neq 0$. Thus $\phi^*$ is injective. Choose an embedding $\bR\hookrightarrow\bQ_\ell^\ra$. Then the map $H^p(X^\an,\bR)\to E_2^{p,0}\otimes_{\bQ_\ell}\bQ_\ell^\ra$ is injective. Thus $\rN_X^p(\omega)\in H^p(X^\an,\bQ)$.

Now we focus on the commutativity. For $1\leq i\leq m$, put $U^i=\pi_\cX^{-1}Y^i$, and choose a presentation $(U^i,\{c_l^i\},\{f_{lk}^i\}\res 1\leq l\leq M_i,1\leq k\leq p)$ of $\omega$ on $Y^i$. For every subset $I\subset\{1,\dots,m\}$, put $U^I=\bigcap_{i\in I}U^i=\pi_\cX^{-1}Y^I$.
Since $\pi_0(U^I)=\pi_0(Y^I)$, the assignment $Z\mapsto\ord_\omega(Z)$ gives rise to a \v{C}ech $p$-cocycle $\theta_\omega$ for the sheaf $\bQ$ with respect to the ordered open covering $\underline{U}=\{U^1,\dots,U^m\}$ of $\cX_\eta^\an$. Since $\delta_p^*\ord_\omega=0$ by Lemma \ref{le:e2} (2), $\theta_\omega$ is closed hence gives rise to a class in $H^p(\underline{U},\bQ)$, whose image $[\theta_\omega]\in H^p(\cX_\eta^\an,\bQ)$ coincides with $(\kappa^p_{\cX_\eta})^{-1}(\rN_\cX^p(\ord_\omega))$ where $\kappa^p_{\cX_\eta}\colon H^p(\cX_\eta^\an,\bQ_\ell)\xrightarrow{\sim} E_2^{p,0}$ is the isomorphism in Lemma \ref{le:comparison}. Therefore by Lemma \ref{le:functorial} (1), the commutativity of \eqref{eq:rational} is equivalent to that $(-1)^{p(p+1)/2}p!\rJ\omega$ is a Dolbeault representative of $[\theta_\omega]$.

Let us recall the construction of Dolbeault representatives. For an abelian sheaf $\sF$ on $\cX_\eta^\an$ and $r\geq 0$, denote by
\[C^r(\underline{U},\sF)=\bigoplus_{|I|=r+1}\Gamma(U^I,\sF)\]
the abelian group of \v{C}ech $r$-cocycles for $\sF$ with respect to $\underline{U}$. We have a coboundary map $\delta\colon C^r(\underline{U},\sF)\to C^{r+1}(\underline{U},\sF)$. By the Poincar\'{e} lemma (\cite{Jel16}*{Corollary 4.6}), we have a short exact sequence
\[
0 \to \sA^{0,r,\cl}_{\cX_\eta^\an} \to \sA^{0,r}_{\cX_\eta^\an} \xrightarrow{\rd''} \sA^{0,r+1,\cl}_{\cX_\eta^\an} \to 0.
\]
If we have elements $\theta_r\in C^r(\underline{U},\sA^{0,p-r-1}_{\cX_\eta^\an})$ for $0\leq r\leq p-1$ satisfying
\begin{align}\label{eq:rational1}
\rd''\theta_0=\rJ\omega,\quad \rd''\theta_1=\delta\theta_0,\quad\cdots\quad
\rd''\theta_{p-1}=\delta\theta_{p-2},\quad \frac{(-1)^{p(p+1)/2}}{p!}\theta_\omega=\delta\theta_{p-1},
\end{align}
then $(-1)^{p(p+1)/2}p!\rJ\omega$ is a Dolbeault representative of $[\theta_\omega]$ and \eqref{eq:rational} commutes. Here, we regard $\rJ\omega$ as an element in $C^0(\underline{U},\sA_{X^\an}^{0,p})$.

The remaining proof will be dedicated to the construction of $\theta_r$. For $r\geq 0$, denote by $\cD^r$ the set of irreducible components of $Y^{[r]}$.

\emph{Step 1.} For $Z\in\cD^r$, put $U_Z=\pi_\cX^{-1}Z$, $M_Z=\sum_{i\in I(Z)}M_i$ and let
\[
f_Z\colon U_Z\to(\bG_m^\an)^{M_Zp}
\]
be the moment map given by invertible functions $\{f^i_{lk}\res i\in I(Z),1\leq l\leq M_i,1\leq k\leq p\}$ on $U$. Put $U_Z^\circ=\pi_\cX^{-1}(Z\setminus Y^{[r+1]})$. The image of the tropicalization map $f_{Z,\trop}\colon U_Z\to\bR^{M_Zp}$, denoted by $C_Z$, is canonically a simplicial complex, induced by the reduced normal crossing divisor $Z\cup Y^{[r+1]}$ of $Z$, with the unique minimal simplex $\Delta_Z\coloneqq f_{Z,\trop}(U_Z^\circ)$. We now define the \emph{barycenter} $P_Z$ of $\Delta_Z$.

Let $\eta_Z$ be the generic point of $Z$. Let $(\cV,\{g_0,\dots,g_r\})$ be a semistable chart at $\eta_Z$. We let $g_Z\colon U_Z\cap\cV_\eta^\an\to(\bG_m^\an)^{r+1}$ be the moment map induced by $\{g_0,\dots,g_r\}$. Then the image of $g_{Z,\trop}$ is the standard (open) simplex $\Delta^\circ$ in $\bR^{r+1}$, which is $\{(x_0,\dots,x_r)\in\bR^{r+1}\res x_0+\cdots+x_r=1,x_i>0\}$. Similar to the proof of Lemma \ref{le:e2}, we have unique integers $a^i_{lkj}$ such that
\[f^i_{lk}\cdot\prod_{j=0,j\neq i}^r g_j^{-a^{i}_{lkj}}\]
has normal $1$ on $\pi_\cX^{-1}(Z\cap\cV)\subset U_Z^\circ$. Then there exists a unique homomorphism
\begin{align}\label{eq:rational4}
h_Z\colon(\bG_m^\an)^{r+1}\to(\bG_m^\an)^{M_Zp}
\end{align}
determined by integers $\{a^i_{lkj}\}$ such that $f_{Z,\trop}\res_{\pi_\cX^{-1}(Z\cap\cV)}=(h_Z\circ g_Z)_\trop$. Then we have \[h_Z^\flat(\Delta^\circ)=f_{Z,\trop}(\pi_\cX^{-1}(Z\cap\cV))\subset\Delta_Z.\]
We define $P_Z$ to be the image under $h_Z^\flat$ (Notation \ref{no:tropicalization}) of the barycenter of $\Delta^\circ$, which is $\{(\frac{1}{r+1},\dots,\frac{1}{r+1})\}$. It is easy to see that the point $P_Z$ does not depend on the choice of the semistable chart of $\eta_Z$ (and in fact, $h_Z^\flat(\Delta^\circ)=\Delta_Z$). To summarize, we have $P_Z\in\Delta_Z\subset C_Z\subset\bR^{M_Zp}$.

\emph{Step 2.} Take another irreducible component $Z'\in\cD^{r'}$ such that $Z\subset Z'$. In particular, we have $r'\leq r$ and $I(Z')\subset I(Z)$. Let
\begin{align}\label{eq:rational5}
h_{Z,Z'}\colon(\bG_m^\an)^{M_Zp}\to (\bG_m^\an)^{M_{Z'}p}
\end{align}
be the canonical projection by forgetting components with $i\in I(Z)\setminus I(Z')$. Then the following diagram
\[
\xymatrix{
U_Z \ar[r]^-{f_Z}\ar@{^(->}[d]_-{i_{Z,Z'}} & (\bG_m^\an)^{M_Zp} \ar[d]^-{h_{Z,Z'}} \\
U_{Z'} \ar[r]^-{f_{Z'}} & (\bG_m^\an)^{M_{Z'}p}
}
\]
commutes, where $i_{Z,Z'}$ is the natural inclusion map. It induces, for every $q\geq 0$, the following commutative diagram
\[
\xymatrix{
\sA^{0,q}_{\overline{C_{Z'}}}(C_{Z'}) \ar[r]^-{\iota_{Z'}}\ar[d]_-{(h_{Z,Z'}^\flat)^*} & \sA^{0,q}_{\cX_\eta^\an}(U_{Z'})  \ar[d]^-{i_{Z,Z'}^*} \\
\sA^{0,q}_{\overline{C_{Z}}}(C_{Z}) \ar[r]^-{\iota_Z} & \sA^{0,q}_{\cX_\eta^\an}(U_{Z})
}
\]
where $\iota_Z$ and $\iota_{Z'}$ are natural maps from the definition of $\sA^{0,q}_{\cX_\eta^\an}$.

On the other hand, we have a canonical isomorphism
\[
C^r(\underline{U},\sA^{0,q}_{\cX_\eta^\an})\simeq\bigoplus_{Z\in\cD^r}\sA^{0,q}_{\cX_\eta^\an}(U_Z),
\]
which induces a canonical map
\[
\iota^r\colon \bigoplus_{Z\in\cD^r}\sA^{0,q}_{\overline{C_Z}}(C_Z)\to C^r(\underline{U},\sA^{0,q}_{\cX_\eta^\an})
\]
for $r\geq 0$, and there is a similarly defined coboundary map
\begin{align}\label{eq:rational6}
\delta\colon\bigoplus_{Z\in\cD^r}\sA^{0,q}_{\overline{C_Z}}(C_Z)\to\bigoplus_{Z'\in\cD^{r+1}}\sA^{0,q}_{\overline{C_{Z'}}}(C_{Z'})
\end{align}
using $(h_{Z,Z'}^\flat)^*$ as restriction maps, such that it is compatible with the coboundary map for $C^\bullet(\underline{U},\sA^{0,q}_{\cX_\eta^\an})$ under $\iota^\bullet$.

Moreover, since $C_Z$ is star-shaped with respect to $P_Z$, we have the star-shape integration map
\[
\cI''_{P_Z}\colon\sA^{0,q}_{\overline{C_Z}}(C_Z)\to \sA^{0,q-1}_{\overline{C_Z}}(C_Z)
\]
for $q\geq1$ (see \Sec\ref{ss:6} for a formula on the standard simplex). We put
\[
\cI''\coloneqq\bigoplus\cI''_{P_Z}\colon\bigoplus_{Z\in\cD^r}\sA^{0,q}_{\overline{C_Z}}(C_Z)
\to\bigoplus_{Z\in\cD^r}\sA^{0,q-1}_{\overline{C_Z}}(C_Z).
\]

\emph{Step 3.} Note that $\cD^0=\{Y_i\res 1\leq i\leq m\}$. Define $\vartheta_0\in\bigoplus_{Z\in\cD^0}\sA^{0,p-1}_{\overline{C_Z}}(C_Z)$
by the formula
\[
\vartheta_0(Y^i)=\cI''_{P^{Y^i}}\(\sum_{l=1}^{M_i}c_l^i\;\rd''x_{l1}^i\wedge\cdots\wedge\rd''x_{lp}^i\),
\]
where $x^i_{lk}$ is the standard coordinates on $\bR^{M_ip}$. For $1\leq r\leq p-1$, we define
\[
\vartheta_r=\cI''(\delta\vartheta_{r-1})\in\bigoplus_{Z\in\cD^r}\sA^{0,p-r-1}_{\overline{C_Z}}(C_Z).
\]
We claim that
\begin{align}\label{eq:rational2}
\rd'(\delta\vartheta_{r-1})=\rd''(\delta\vartheta_{r-1})=0
\end{align}
for $1\leq r\leq p-1$, and
\begin{align}\label{eq:rational3}
\delta\vartheta_{p-1}=\frac{(-1)^{p(p+1)/2}}{p!}\theta_w.
\end{align}
We leave the proof to the next step. Assuming these, then $\rd''\vartheta_r=\delta\vartheta_{r-1}$ as $\rd''(\delta\vartheta_{r-1})=0$. Finally, we put $\theta_r=\iota^r\vartheta_r\in C^r(\underline{U},\sA^{0,p-r-1}_{\cX_\eta^\an})$ for $0\leq r\leq p-1$. Then they satisfies \eqref{eq:rational1}.

\emph{Step 4.} We have to verify \eqref{eq:rational2} and \eqref{eq:rational3} for $(\delta\vartheta_{r-1})(Z)\in\sA^{0,p-r}_{\overline{C_Z}}(C_Z)$ for every $Z\in\cD^r$. Without lost of generality, we assume that $X$ has dimension $n$. For each fixed $Z\in\cD^r$, it suffices to consider the restriction of $(\delta\vartheta_{r-1})(Z)$ to maximal (open) cells of $C_Z$, which are all of the form $f_{Z,\trop}(U_z)$ where $z\in Z$ is a closed point that belongs to $\cD^n$.

The ideal is to reverse the consideration. We fix a closed point $z\in\cD^n$ and consider the restriction of $(\delta\vartheta_{r-1})(Z)$ to $f_{Z,\trop}(U_z)$ for all $1\leq r\leq p$ and all $Z\in\cD^r$ such that $z\in Z$. We fix a semistable chart $(\cV,\{g_0,\dots,g_n\})$ at $z$. Then $\cV_\eta^\an$ contains $U_z$. We have the moment map $g\colon\cV_\eta^\an\to(\bG_m^\an)^{n+1}$ and $g_\trop\colon\cV_\eta^\an\to\bR^{n+1}$ so that $g_\trop(U_z)$ is the standard open simplex $\Delta^\circ$ in $\bR^{n+1}$. Note that for every subset $I\subset I(z)$ of cardinality $r+1$, there is a unique element $Z\in\cD^r$ that contains $z$, which we denote by $Z_I$. For every $I$ as above, put
\[
h_I\coloneqq h_{z,Z_I}\circ h_z\colon(\bG_m^\an)^{n+1}\to(\bG_m^\an)^{M_{Z_I}p}
\]
where $h_z$ and $h_{z,Z_I}$ are defined in \eqref{eq:rational4} and \eqref{eq:rational5}, respectively. Then $f_{Z,\trop}(U_z)$ is the image of $\Delta^\circ$ under $h_I^\flat$. Define the composite map
\[
h_I^\dag\colon\sA^{0,q}_{\overline{C_{Z_I}}}(C_{Z_I})\xrightarrow{(h_I^\flat)^*}\sA^{0,q}_\Delta(\Delta^\circ)\to\Omega^q(\Delta)
\]
where the second map is simply regarding a $(0,q)$-superform as an ordinary $q$-form (see \Sec\ref{ss:6} for notation concerning $\Delta$). Since $h_I^\flat$ is affine and $h_I(P^I)=P_{Z^I}$, the following diagram
\[
\xymatrix{
\sA^{0,q}_{\overline{C_{Z_I}}}(C_{Z_I}) \ar[r]^{h_I^\dag}\ar[d]_-{\cI''_{P_{Z_I}}} & \Omega^q(\Delta) \ar[d]^-{\cI_{P^I}} \\
\sA^{0,q-1}_{\overline{C_{Z_I}}}(C_{Z_I}) \ar[r]^{h_I^\dag} & \Omega^{q-1}(\Delta)
}
\]
commutes. Moreover, we have the following commutative diagram
\[
\xymatrix{
\bigoplus_{Z\in\cD^r}\sA^{0,q}_{\overline{C_Z}}(C_Z) \ar[r]^-{h_r}\ar[d]_-{\delta} & C^r(\Omega^q(\Delta)) \ar[d]^-{\delta} \\
\bigoplus_{Z'\in\cD^{r+1}}\sA^{0,q}_{\overline{C_{Z'}}}(C_{Z'}) \ar[r]^-{h_{r+1}} & C^{r+1}(\Omega^q(\Delta))
}
\]
concerning coboundary maps. Here, $\delta$ on the left-hand side is \eqref{eq:rational6}; $\delta$ on the right-hand side is defined in \Sec\ref{ss:6}; and $h_r$ is the composition of the projection
\[\bigoplus_{Z\in\cD^r}\sA^{0,q}_{\overline{C_Z}}(C_Z)\to\bigoplus_{I\subset I(z),|I|=r+1}\sA^{0,q}_{\overline{C_{Z_I}}}(C_{Z_I})\]
and $\bigoplus_{I\subset I(z),|I|=r+1}h_I^\dag$.

Take $\beta_0=h_0(\rd''\vartheta_0)$. Then $\beta_0$ comes from a unique element $\beta\in\Omega^p(\Delta)$ as in Proposition \ref{pr:star}. By the above discussion, we have
\[\beta_r=h_r(\delta\vartheta_{r-1})\]
for $1\leq r\leq p$, where $\beta_r$ is defined as in Proposition \ref{pr:star}. In particular, \eqref{eq:rational2} (resp.\ \eqref{eq:rational3}) holds when restricted to $f_{Z,\trop}(U_z)$ by Proposition \ref{pr:star} (1) (resp.\ (2)). Since $z$ is arbitrary, \eqref{eq:rational2} and \eqref{eq:rational3} hold, and the lemma follows.
\end{proof}

\begin{lem}\label{le:presentation}
We have $H^0(X^\an,\sT_{X^\an}^p)=\bigcup_\cX H^0(X^\an,\sT_{X^\an}^p)_\cX$, where the union is taken over all strict semistable alterations $\cX$ of $X$.
\end{lem}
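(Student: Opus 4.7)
The plan is to reduce $\omega$ to a finite collection of local algebraic presentations via quasi-compactness, then construct a strict semistable alteration $\cX$ adapted to the divisors of these presentations by applying de Jong's theorem together with principalization, and finally rewrite the pullback symbols using the multilinearity of $\tau$ so that the normalization $|f_{lk}|=1$ on $\pi_\cX^{-1}(Y^i\setminus Y^{[1]})$ holds.

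I first use that $\sT^p_{X^\an}$ is by definition the image of $\tau$ to cover $X^\an$ by a finite collection of opens $\{U_\alpha\}$ (possible by compactness, since $X$ is proper) carrying presentations $\eta_\alpha=\sum_lc_{\alpha,l}\{f_{\alpha,l,1},\dots,f_{\alpha,l,p}\}\in\sO^{(p)}_{X^\an}(U_\alpha)$ with $\omega\res_{U_\alpha}=\tau(\eta_\alpha)$. After further refinement, I take each $U_\alpha=V_\alpha^\an$ for a Zariski open $V_\alpha\subset X$ and each $f_{\alpha,l,k}$ to be a rational function on $X$ regular and invertible on $V_\alpha$; this uses that $\tau(\{f\})$ depends only on $|f|$, so the norm-one factors separating an analytic unit from a rational invertible get absorbed via multilinearity.

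I then apply de Jong's strict semistable alteration theorem to the pair $(X,D)$ where $D=\sum_{\alpha,l,k}\DIV(f_{\alpha,l,k})+\sum_\alpha(X\setminus V_\alpha)$. This produces a finite extension $K'/K$ in $K^\ra$, a proper strictly semistable $\cX/K'^\circ$, and a proper generically finite $\phi\colon\cX_\eta\to X$. After enlarging $K'$ so that every irreducible component of $Y^{(r)}$ is geometrically irreducible, and after further blow-ups on $\cX$ (for principalization), I arrange that every horizontal component of $\DIV(\phi^*f_{\alpha,l,k})$ lies over $X\setminus V_\alpha$ and that $Y$ is refined so that each irreducible component $Y^i$ satisfies $\phi(\pi_\cX^{-1}Y^i)\subset V_{\alpha(i)}^\an$ for some $\alpha(i)$. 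Then $U\coloneqq\phi^{\an,-1}(V_{\alpha(i)}^\an)$ is an open neighborhood of $\pi_\cX^{-1}Y^i$ on which every $\phi^*f_{\alpha(i),l,k}$ is analytic invertible.

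On $U$ the symbol $\sum_lc_{\alpha(i),l}\{\phi^*f_{\alpha(i),l,\bullet}\}$ represents $\phi^{\an*}\omega$ under $\tau$; however, the normalization $|f_{lk}|=1$ on $\pi_\cX^{-1}(Y^i\setminus Y^{[1]})$ can fail when some $\phi^*f_{\alpha(i),l,k}$ vanishes along $Y^i$. To remedy this, I plan to use multilinearity of $\tau$ together with a finite cover of $Y^i$ by semistable charts: within each chart $(\cV_\beta,\{g_{\beta,0},\dots\})$ the pullback factors as a unit times a monomial in the local equations $g_{\beta,j}$ of the components meeting $\cV_\beta$, so the symbol expands into a $\bQ$-combination of symbols whose entries are either such units or local equations of components $Y^j\neq Y^i$; both kinds satisfy $|\cdot|=1$ on the tube of $Y^i\setminus Y^{[1]}$. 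The hard part will be assembling these chart-wise rewritings into a single element of $\sO^{(p)}(U)$, since the local equations $g_{\beta,j}$ do not glue into global invertible functions on $U$. I expect to handle this by aggregating the chart-dependent contributions from a finite semistable-chart cover of $Y^i$ into one large symbol sum: since the definition of ``presents'' only demands one preimage of $\phi^{\an*}\omega$ under $\tau$ on $U$, and the discrepancies on chart overlaps take the form of $\tau$ of symbols whose entries all have absolute value one on $\pi_\cX^{-1}(Y^i\setminus Y^{[1]})$, they can be absorbed into additional terms that still satisfy the normalization.
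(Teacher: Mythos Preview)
Your overall architecture (finite cover, alteration via de Jong, then normalize) matches the paper's, but the normalization step contains a real gap, and the paper's fix is far simpler than what you attempt.

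You correctly identify the problem: the pullbacks $\phi^*f_{\alpha(i),l,k}$ may have $|\cdot|\neq 1$ on $\pi_\cX^{-1}(Y^i\setminus Y^{[1]})$. Your proposed remedy---expanding via semistable charts and then ``aggregating chart-dependent contributions into one large symbol sum''---does not work as stated. The definition of a presentation demands $f_{lk}\in\sO^*_{\cX_\eta^\an}(U)$ \emph{globally} on $U$; your local equations $g_{\beta,j}$ are invertible only on the individual chart tubes, not on all of $U$, so there is no symbol in $\sO^{(p)}(U)$ to aggregate them into. The discrepancies on overlaps cannot be ``absorbed'' because you have no global carrier for them.

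The point you are missing is that the normalization is achieved by multiplying each $\phi^*f_{\alpha(i),l,k}$ by a suitable scalar $a_{lk}\in K_\cX^*$. Since $Y^i\setminus Y^{[1]}$ lies in the smooth locus of $\cX$ over $K_\cX^\circ$, locally near any of its points the only vertical divisor is $Y^i$ itself; hence if $v=\ord_{Y^i}(\phi^*f_{\alpha(i),l,k})$, then $\phi^*f_{\alpha(i),l,k}/\varpi_{K_\cX}^{v}$ is a unit in the formal local ring, so $|\phi^*f_{\alpha(i),l,k}|=|\varpi_{K_\cX}|^{v}$ identically on $\pi_\cX^{-1}(Y^i\setminus Y^{[1]})$. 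Taking $a_{lk}=\varpi_{K_\cX}^{-v}$ gives $|a_{lk}\phi^*f_{\alpha(i),l,k}|=1$ there. This does not disturb $\tau$: multiplying any slot of $\{f_1,\dots,f_p\}$ by a constant translates the tropicalization map by a constant vector and leaves $\rd'x_1\wedge\cdots\wedge\rd'x_p$ unchanged. Thus $((\phi^\an)^{-1}V_{\alpha(i)}^\an,\{c_l\},\{a_{lk}\phi^*f_{\alpha(i),l,k}\})$ is already a presentation of $\omega$ on $Y^i$, with no chart-wise manipulation needed.

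A secondary remark: for the refinement step---arranging that each $\pi_\cX^{-1}Y^i$ lands inside some $(\phi^\an)^{-1}V_{\alpha(i)}^\an$---the paper invokes Payne's theorem (analytification as the limit of tropicalizations) to first produce an integral model whose component-tubes refine the given analytic cover, and only then applies de Jong. Your route through de Jong on $(X,D)$ plus principalization does not obviously yield this refinement; principalization controls the horizontal divisor of $\phi^*f$, but does not by itself force the special fiber to be fine enough that the tubes over its components refine an arbitrary analytic open cover.
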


\begin{proof}
Let $\omega$ be an element in $H^0(X^\an,\sT_{X^\an}^p)$. Since $X$ is proper, we may choose a finite open covering $\underline{U}$ of $X^\an$ such that for every $U\in\underline{U}$, there are $c_l\in\bQ$ for $1\leq l\leq M$ with some integer $M\geq 1$, and $f_{lk}\in\sO^*_{X^\an}(U)$ for $1\leq l\leq M$ and $1\leq k\leq p$, such that $\omega\res_U=\tau\(\sum_{l=1}^Mc_l\{f_{l1},\dots,f_{lp}\}\)$.

By \cite{Pay09}*{Theorem 4.2}, taking blow-ups, and possibly taking a finite extension $K'/K$ inside $K^\ra$, we have a (proper flat) integral model $\cX_0$ of $X\otimes_KK'$ such that if $Y_0^1,\dots,Y_0^{m_0}$ are all reduced irreducible components of $\cX_0\otimes_{K'^\circ}\widetilde{K'}$, then the covering $\{\pi_{\cX_0}^{-1}Y_0^i\res i=1,\dots,m_0\}$ refines $\underline{U}$. By \cite{dJ96}*{Theorem 8.2}, we have a proper strictly semistable scheme $\cX$ over $K_\cX^\circ$, where $K_\cX/K'$ is a finite extension inside $K^\ra$, with a proper dominant generically finite morphism $\cX\to\cX_0$. Replacing $K_\cX$ by a finite unramified extension inside $K^\ra$, we may assume that every irreducible component of $Y^{(p)}$ for $p\geq 0$ is geometrically irreducible. Therefore, $(\cX,\phi\colon\cX_\eta\to X)$ is a strict semistable alteration of $X$ such that $\{\pi_\cX^{-1}Y^i\res i=1,\dots,m\}$ refines $(\phi^\an)^{-1}\underline{U}$.

Now for an arbitrary irreducible component $Y^i$ of $Y$, take an open subset $U\in\underline{U}$ such that $\pi_\cX^{-1}Y^i\subset(\phi^\an)^{-1}U$. For every $f_{lk}$ as above, we may choose an element $a_{lk}\in K^*$ such that $|a_{lk}\phi^{\an*}f_{lk}|=1$ on $\pi_\cX^{-1}(Y^i\setminus Y^{[1]})$. Thus $((\phi^\an)^{-1}U,\{c_l\},\{a_{lk}\phi^{\an*}f_{lk}\})$ is a presentation of $\omega$ on $Y^i$. Therefore, $\omega$ belongs to $H^0(X^\an,\sT_{X^\an}^p)_\cX$.
\end{proof}

\begin{proof}[Proof of Theorem \ref{th:monodromy}]
Part (1) is a consequence of Lemmas \ref{le:rational} and \ref{le:presentation}. For (2), it is equivalent to check that the map $\rN_X^p\colon H^0(X^\an,\sT_{X^\an}^p)\to H^p(X^\an,\bQ)$ is injective. Let $\omega$ be an element of $H^0(X^\an,\sT_{X^\an}^p)$. By Lemma \ref{le:presentation}, we may assume that $\omega\in H^0(X^\an,\sT_{X^\an}^p)_\cX$ for a strict semistable alterations $\cX$ of $X$. Then it follows from Lemmas \ref{le:e2} and \ref{le:rational} as we assume that $\rN_\cX^p$ is injective.
\end{proof}

\begin{corollary}\label{co:injective}
Let $X_0$ be a proper smooth scheme over $K_0$ that is isomorphic to $k(\!(t)\!)$ for $k$ either a finite field or a field of characteristic zero. Let $K$ be a closed subfield of $\widehat{K_0^\ra}$ containing $K_0$. Then the monodromy map
\[\rN_X^p\colon H^{p,0}_\trop(X)\to H^{0,p}_\trop(X)\]
is injective, where $X=X_0\otimes_{K_0}K$. In particular, we have $h^{p,0}_\trop(X)<\infty$ and $h^{p-q,q}_\trop(X)\geq h^{p,0}_\trop(X)$ for $0\leq q\leq p$.
\end{corollary}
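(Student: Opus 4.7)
The strategy is to apply Theorem \ref{th:monodromy}(2) to $X_0$ over the discrete field $K_0$, and then extend the resulting injectivity to $X=X_0\otimes_{K_0} K$ by a base change argument. To verify the two hypotheses of Theorem \ref{th:monodromy}(2) for $X_0/K_0$: (a) Injectivity of $\kappa^p_{X_0}$ follows from Lemma \ref{le:injective}(1) when $k$ is finite, since then $K_0=k(\!(t)\!)$ is a local non-Archimedean field; when $k$ has characteristic zero, equal-characteristic-zero semistable reduction produces a proper strictly semistable model of $X_0$ after a finite base change, so Lemma \ref{le:injective}(2) applies. (b) For every strict semistable alteration $\cX$ of $X_0$, the map $\rN_\cX^p\colon E_2^{-p,2p}(p)\to E_2^{p,0}$ is injective by Proposition \ref{pr:purity}(2), since $K_0$ has equal characteristic. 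Thus $\rN_{X_0}^p\colon H^{p,0}_\trop(X_0)\to H^{0,p}_\trop(X_0)$ is injective.

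To pass from $X_0$ to $X$, take any $\omega\in H^0(X^\an,\sT_{X^\an}^p)$ with $\rN_X^p\omega=0$. Since $X^\an$ is compact (as $X_0$ is proper) and $\sT^p$ is locally generated by symbols in invertible functions, $\omega$ can be presented using finitely many invertible functions on a finite affinoid cover, whose coefficients lie in some finite extension $K_1/K_0$ contained in $K$. Thus $\omega=\pi^*\omega_1$ for some $\omega_1\in H^0(X_1^\an,\sT_{X_1^\an}^p)$ with $X_1=X_0\otimes_{K_0}K_1$ and $\pi\colon X^\an\to X_1^\an$ the base change. The same verification above applies to $X_1/K_1$ (still a discrete non-Archimedean field of equal characteristic with residue field $k$), so Theorem \ref{th:monodromy}(2) gives that $\rN_{X_1}^p$ is injective. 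By functoriality of the monodromy map (Lemma \ref{le:functorial}(4)), $0=\rN_X^p\omega=\pi^*\rN_{X_1}^p\omega_1$. Combining with the injectivity of $\pi^*$ on $H^{0,p}_\trop$ (which reduces, via Berkovich's comparison with $\ell$-adic \'{e}tale cohomology, to the base-change invariance of the latter between algebraically closed fields of characteristic distinct from $\ell$), one obtains $\rN_{X_1}^p\omega_1=0$, whence $\omega_1=0$, hence $\omega=0$.

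The main obstacle is the descent step, namely showing that every global section of $\sT^p$ on $X^\an$ is defined over a finite subextension, together with the subsequent injectivity of pullback on $H^{0,p}_\trop$. An alternative is to directly adapt the proof of Theorem \ref{th:monodromy}(2) to non-discrete $K$, working throughout with strict semistable alterations of $X_0$ (defined over finite extensions of $K_0$ inside $K_0^\ra$) base-changed along $K_0\hookrightarrow K$, and tracking the compatibility of the $\ord$-map under base change. The ``in particular'' statements then follow immediately: $h^{p,0}_\trop(X)<\infty$ because $\rN_X^p$ embeds $H^{p,0}_\trop(X)$ into $H^p(X^\an,\bQ)\otimes\bR$, which is finite-dimensional via $\kappa^p_X$ and the comparison with $H^p_\et(X_\ra,\bQ_\ell)$; and $h^{p-q,q}_\trop(X)\geq h^{p,0}_\trop(X)$ follows from the factorization $\rN_X^p=\rN_X^q\circ\rN_X^{p-q}$, which forces $\rN_X^{p-q}\colon H^{p,0}_\trop(X)\to H^{p-q,q}_\trop(X)$ to be injective.
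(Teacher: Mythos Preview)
Your approach is essentially the same as the paper's: reduce to the case of a finite extension $K/K_0$, verify the hypotheses of Theorem \ref{th:monodromy}(2) via Lemma \ref{le:injective} and Proposition \ref{pr:purity}(2), and then handle general $K$ by a descent argument.

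The only substantive difference is how you justify the descent step. The paper handles both parts of what you call the ``main obstacle'' more directly than your proposal does. For the first part, it simply asserts (from properness of $X_0$) that
\[
H^{p,0}_\trop(X)=\bigcup_{K_0\subset K'\subset K,\ [K':K_0]<\infty}H^{p,0}_\trop(X_0\otimes_{K_0}K'),
\]
which is the precise form of the claim you sketch with affinoid covers. For the second part---injectivity of $\pi^*$ on $H^{0,p}_\trop$---the paper does not go through the \'etale comparison as you suggest, but instead invokes \cite{Berk99}*{Theorem~10.1} to conclude that $H^{0,p}_\trop(X_0\otimes_{K_0}K')\simeq H^p((X_0\otimes_{K_0}K')^\an,\bR)$ stabilizes as $K'$ grows, so that the transition maps (and hence $\pi^*$) are eventually isomorphisms. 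Your route via $\kappa^p_{X_1}$ and base-change invariance of $\ell$-adic \'etale cohomology also works (note you only need injectivity of $\kappa^p_{X_1}$ for discrete $K_1$, not of $\kappa^p_X$), but the paper's citation is cleaner and avoids the need to embed $\bR$ into $\overline{\bQ_\ell}$. Your alternative suggestion of redoing Theorem \ref{th:monodromy}(2) over non-discrete $K$ is unnecessary.

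One small correction: in your finiteness argument for $h^{p,0}_\trop(X)$, you cannot appeal directly to $\kappa^p_X$ for general $K$ (it may fail to be injective when $K$ is not discrete); the finite dimension of the target $H^{0,p}_\trop(X)=H^p(X^\an,\bR)$ follows instead from the stabilization result just cited.
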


\begin{proof}
Suppose first $K/K_0$ is a finite extension. By Lemma \ref{le:injective} and resolution of singularity when $\Char k=0$, the map $\kappa_X^p$ is injective. Then the corollary follows from Theorem \ref{th:monodromy} and Proposition \ref{pr:purity} (1).

In general, since $X_0$ is proper, we have that
\[H^{p,0}_\trop(X)=\bigcup_{K_0\subset K'\subset K}H^{p,0}_\trop(X_0\otimes_{K_0}K')\]
where the union is taken over all finite extensions $K'/K_0$ contained in $K$. By \cite{Berk99}*{Theorem 10.1}, we know that $H^{p,0}_\trop(X_0\otimes_{K_0}K')\simeq H^p(X_0^\an\otimes_{K_0}K',\bR)$ stabilizes when $K'$ increases. Therefore, the injectivity of $\rN_X^p$ is reduced to the case where $K/K_0$ is finite.
\end{proof}

In the case where $p=1$, we have the following stronger result over certain non-Archimedean fields.

\begin{theorem}\label{th:isomorphic}
Let $K_0$ be a local non-Archimedean field, and $X_0$ a proper smooth scheme over $K_0$ that admits a proper strictly semistable model over $K_0^\circ$. Then the monodromy map
\[\rN_X\colon H^{1,0}_\trop(X)\to H^{0,1}_\trop(X)\]
is an isomorphism, where $X=X_0\otimes_{K_0}\widehat{K_0^\ra}$.
\end{theorem}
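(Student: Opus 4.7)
Plan. I would split the proof into injectivity and surjectivity of $\rN_X$. For injectivity, the colimit argument of Corollary \ref{co:injective} (using Berkovich's \cite{Berk99}*{Theorem 10.1}) reduces to $X_0\otimes_{K_0}K''$ for finite $K''/K_0\subset\widehat{K_0^\ra}$, to which Theorem \ref{th:monodromy}(2) applies with $p=1$: Lemma \ref{le:injective}(1) supplies the hypothesis on $\kappa^1$ (as $K''$ is local), and Proposition \ref{pr:purity}(3) supplies the injectivity of $\rN_\cX\colon E^{-1,2}_2(1)\to E^{1,0}_2$ (as the residue field of $K''$ is finite).

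For surjectivity, I would first replace $K_0$ by a finite unramified extension large enough that every irreducible component of $Y^{(r)}$ of $\cX_0$ becomes geometrically irreducible; this preserves the hypotheses of the theorem, makes $\cX_0$ into a strict semistable alteration of $X_0$ with $\phi=\r{id}$, and stabilizes $H^{0,1}_\trop=H^1(X_0^\an,\bR)$ under further base change to $\widehat{K_0^\ra}$ (the dual complex is preserved under unramified extensions and undergoes only homotopy-preserving subdivision under ramified ones). By the resulting invariance of the target and the functoriality of $\rN$ (Lemma \ref{le:functorial}(4)), surjectivity of $\rN_X$ reduces to surjectivity of $\rN_{X_0}\colon H^{1,0}_\trop(X_0)\to H^{0,1}_\trop(X_0)$. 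For this, Lemma \ref{le:comparison} identifies $H^1(X_0^\an,\bQ_\ell)\xrightarrow{\sim}E^{1,0}_2$, Proposition \ref{pr:purity}(3) gives $\rN_\cX\colon E^{-1,2}_2(1)\xrightarrow{\sim}E^{1,0}_2$, and the commutativity of diagram \eqref{eq:rational} in Lemma \ref{le:rational} reduces the question to realizing every element of the $\bQ$-structure $E^{-1,2}_2(1)_\bQ\subset E^{-1,2}_2(1)$ (inherited from $H^0(Y^{(1)}_\rs,\bQ)$) as $\ord_\omega$ for some $\omega\in H^0(X_0^\an,\sT^1_{X_0^\an})_{\cX_0}$.

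Given $(n_Z)_{Z\in\cD^1}\in E^{-1,2}_2(1)_\bQ$, the vanishing $\delta_{1*}(n)=0$ combined with the cycle-class injection $\Pic(Y^i)\otimes\bQ_\ell\hookrightarrow H^2_{\et}(Y^i_\rs,\bQ_\ell(1))$ forces $D_i=\sum_{Z\subset Y^i}\pm n_Z Z$ to be numerically trivial on each smooth proper $Y^i$. Here the finiteness of $\widetilde{K_0}$ enters decisively: $\Pic^0(Y^i)$ is finite, so $D_i$ is torsion in $\Pic(Y^i)$ and there exist $N\in\bZ_{>0}$ and $g_i\in K(Y^i)^*$ with $\DIV(g_i)=N\cdot D_i$. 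I would then lift each $g_i$ to $f_i\in\sO^*(U^i)$ on an open neighborhood $U^i\supset\pi_{\cX_0}^{-1}Y^i$ in $X_0^\an$ with $|f_i|=1$ on $\pi_{\cX_0}^{-1}(Y^i\setminus Y^{[1]})$, obtained by patching together the local products $\prod_j g_j^{a_j}$ of semistable coordinates and correcting by units of norm one, and set $\omega|_{U^i}=\tau(\tfrac{1}{N}\{f_i\})$. The main obstacle is twofold: (i) the global existence of the lifts $f_i$ on each $\pi_{\cX_0}^{-1}Y^i$, a formal-rigid patching problem whose $\sO^*$-valued obstruction is controlled by the properness of $\cX_0$ and the geometric irreducibility of the strata; and (ii) the gluing of the local $1$-forms into a global section of $\sT^1$ on each double-intersection tube $\pi_{\cX_0}^{-1}Z$ for $Z\in\cD^1$. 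For (ii), a direct computation in a semistable chart at $\eta_Z$ with coordinates $(g_0,g_1)$ satisfying $g_0g_1=\varpi$ shows that $\omega|_{U^{i_0}}$ and $\omega|_{U^{i_1}}$ both restrict to $-n_Z\,\rd'\log|g_1|$ on $\pi_{\cX_0}^{-1}Z$: the sign-antisymmetry inherent in the $\delta_{1*}$-condition (cf.\ \eqref{eq:e2}) forces the matching of exponents once one uses $\rd'\log|g_0|=-\rd'\log|g_1|$, which comes from the constancy of $|g_0g_1|=|\varpi|$. Thereby $\omega\in H^0(X_0^\an,\sT^1_{X_0^\an})_{\cX_0}$ is globally defined and satisfies $\ord_\omega=n$ by construction, completing the surjectivity and hence the theorem.
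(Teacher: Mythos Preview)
Your overall strategy coincides with the paper's: reduce injectivity to finite levels via Theorem \ref{th:monodromy}(2), and for surjectivity use $\delta_{1*}D=0$ together with the finiteness of $\Pic^0_{Y^i/\widetilde K}(\widetilde K)$ to produce rational functions $g_i$ on each $Y^i$, then lift and glue to a global section of $\sT^1$. The paper organizes the endgame as a dimension count (an injective map $\rT_\cX\colon E_\cX\hookrightarrow H^0(X^\an,\sL^1_{X^\an})$ forces $h^{1,0}\geq h^{0,1}$), which is equivalent to your ``$\ord$ is surjective'' formulation via diagram \eqref{eq:rational}.

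The genuine gap is in your gluing. You attempt to produce a global $f_i\in\sO^*(U^i)$ on each tube $U^i=\pi_{\cX_0}^{-1}Y^i$ and then match only along generic points of $Z\in\cD^1$. But a single $f_i\in\sO^*(U^i)$ need not exist; the paper does \emph{not} construct one. Instead it builds $\omega$ pointwise: at $y$ with $p(y)=0$ it uses a \emph{local} lift $f_i^\sharp$ of $g_i$ on a semistable chart, while at $y$ with $p(y)\geq 1$ it abandons $g_i$ entirely and sets $\omega_y=\tau\bigl(\tfrac{1}{q}\{\prod_j g_j^{a(Z_y^{\{i_0,i_j\}})}\}\bigr)$ directly from the combinatorial data. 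This sidesteps your obstacle (i) completely. More importantly, your check of obstacle (ii) in a chart at $\eta_Z$ only treats points of depth exactly $1$; compatibility at points $y\in Z$ with $p(y)\geq 2$ (where three or more components meet) is a separate constraint, and the paper shows it is precisely the condition $\delta_1^*D=0$---the other half of membership in $E_2^{-1,2}(1)$---that makes the depth-$\geq 2$ patches agree. You invoke only $\delta_{1*}D=0$, so as written your glued form is not shown to exist on all of $X_0^\an$.
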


\begin{proof}
Since $X_0$ admits a proper strictly semistable model over $K_0^\circ$, it is well-known (see for example \cite{Sai03}*{Lemma 1.11}) that $X_0\otimes_{K_0}K$ admits a proper strictly semistable model over $K^\circ$ for any finite extension of $K$ of $K_0$. Replacing $K$ by a finite unramified extension, we may assume that $X_0$ admits a strict semistable alteration $\cX$ over $K^\circ$ such that $\phi\colon\cX_\eta\to X_0$ induces an isomorphism $\cX_\eta\xrightarrow{\sim}X_0\otimes_{K_0}K$. We then have an infinite sequence of successive finite field extensions $K_0\subset K_1\subset\cdots$ contained in $K_0^\ra$ such that for $i\geq 1$, $X_0$ admits a strict semistable alteration $\cX_i$ over $K_i^\circ$ with $\phi\colon\cX_{i,\eta}\xrightarrow{\sim} X_i$ being an isomorphism where $X_i\coloneqq X_0\otimes_{K_0}K_i$.

By the similar argument in the proof of Corollary \ref{co:injective}, it suffices to show that
\begin{align}\label{eq:isomorphism1}
\rN_{X_i}\colon H^{1,0}_\trop(X_i)\to H^{0,1}_\trop(X_i)
\end{align}
is an isomorphism for every $i\geq 1$. By Theorem \ref{th:monodromy} (2), Lemma \ref{le:injective} and Proposition \ref{pr:purity} (3), we know that \eqref{eq:isomorphism1} is injective. Thus if $h^{1,0}_\trop(X_i)\geq h^{0,1}_\trop(X_i)$, then \eqref{eq:isomorphism1} is an isomorphism.

We fix such an index $i$ and suppress from notation. In particular, now we have $X=X_0\otimes_{K_0}K$ with $K$ a finite extension of $K_0$. Let $\cX$ be a strict semistable alteration of $X$ with an isomorphism $\phi\colon\cX_\eta\xrightarrow{\sim}X$. We identify $X$ with $\cX_\eta$. By Lemma \ref{le:comparison} and Proposition \ref{pr:purity} (3), the composite map
\[
\rN_\cX^{-1}\circ\kappa_X^1\colon H^1(X^\an,\bQ_\ell)\to E_2^{-1,2}(1)
\]
is an isomorphism, under which the image of $H^1(X^\an,\bQ)$ is contained in $E_2^{-1,2}(1)\cap H^0_{\et}(Y^{(1)}_\rs,\bQ)$, which we denote by $E_\cX$. The theorem will follow if we can construct an injective (linear) map
\[
\rT_\cX\colon E_\cX\to H^0(X^\an,\sL^1_{X^\an}).
\]

For every point $y\in Y$, we fix a semistable chart $(\cV_y,\{g^y_0,\dots,g^y_{p(y)}\})$ at $y$ (Construction \ref{no:coordinate}). We can write an element in $E_\cX$ in the form
\[
D=\frac{1}{q}\sum_{Z\in\cD^1}a(Z)Z
\]
for some integers $a(Z)$, $q>0$, and the sum is taken over $\cD^1$, the set of all irreducible components of $Y^{[1]}$. As $\delta_{1*}D=0$ \eqref{eq:weight}, the divisor $(qD)\cap Y^i$ on $Y^i$ is cohomologically trivial for every irreducible component $Y^i$ of $Y$ ($1\leq i\leq m$). Then there exists some integer $q_i>0$ such that $(q_iqD)\cap Y^i$ is algebraically equivalent to zero; and in particular $\sO_{Y^i}((q_iqD)\cap Y^i)$ is an element in $\Pic^0_{Y^i/\widetilde{K}}(\widetilde{K})$. Since $\Pic^0_{Y^i/\widetilde{K}}$ is a projective scheme over the \emph{finite} field
$\widetilde{K}$, one may replace $q_i$ by some integer multiple such that $\sO_{Y^i}((q_iqD)\cap Y^i)$ is a trivial line bundle. Replacing $q$ by $q\prod_{i=1}^mq_i$, we may assume that for every $i$, there is a rational function $f_i\in\widetilde{K}(Y^i)$ such that its divisor is exactly $(qD)\cap Y^i$.

For every point $y$ of $Y$, we are going to construct an open neighborhood $U_y\subset X^\an$ of $\pi_\cX^{-1}y$ and an element $\omega_y\in\sL_{X^\an}^1(U_y)$. For every nonempty subset $I$ of $I(y)$ (Notation \ref{no:intersection}), let $Z_y^I$ be the unique irreducible component of $Y^I$ that contains $y$. Put $U_y\coloneqq\cV_{y,\eta}^\an\cap\pi_\cX^{-1}Z_y^{I(y)}$. For $\omega_y$, there are two cases:
 \begin{enumerate}
  \item If $p(y)=0$, that is, $Z_y=Y^i$ for some $i$, then we set
      \[
      \omega_y=\tau\(\frac{1}{q}\{f_i^\sharp\}\)\res_{U_y}
      \]
      where $f_i^\sharp$ is a lift of $f_i\res_{\cV_y\cap Y^i}$ to an invertible regular function on $\cV_{y,\eta}$.

  \item If $p(y)>0$, then we set
      \[
      \omega_y=\tau\(\frac{1}{q}\prod_{j=1}^{p(y)}g_j^{a(Z_y^{\{i_0,i_j\}})}\)\res_{U_y}
      \]
      where $I(y)=\{i_0<\cdots<i_{p(y)}\}$. Note that $Z_y^{\{i_0,i_j\}}\in\cD^1$.
\end{enumerate}
We claim that $\{(U_y,\omega_y)\}$ patch together to give an element in $H^0(X^\an,\sL^1_{X^\an})$, which we denote by $\rT_\cX D$. Given two points $y,y'$ of $Y$, it amounts to showing that $\omega_y=\omega_{y'}$ on $U_y\cap U_{y'}$. If one of $y,y'$ is in Case (1), then the compatibility can be shown in the same way as in the proof of Lemma \ref{le:order}. In particular, $\omega_y$ does not depend on the choice of the lift $f_i^\sharp$ in Case (1). If both $y$ and $y'$ are in Case (2), then it is a straightforward consequence of the fact that $\delta_1^*D=0$ \eqref{eq:weight}. We leave those details to the reader.

By construction, $\rT_\cX$ is apparently linear and injective. The theorem is proved.
\end{proof}

\section{Appendix: Star-shaped integration on simplex}
\label{ss:6}

In this appendix, we prove a technical result used in the proof of Lemma \ref{le:rational} as the local computation.

Let $n\geq 1$ be an integer. Let $\Delta\subset\bR^{n+1}$ be the standard (closed) simplex of dimension $n$. If we denote by $x_0,\dots,x_n$ the standard coordinates on $\bR^{n+1}$, then
\[\Delta=\{(x_0,\dots,x_n)\res x_0+\cdots+x_n=1,x_i\geq 0\}.\]
For $p\geq 0$, denote by $\Omega^p(\Delta)$ the space of (smooth) $p$-differential forms on $\Delta$ and $\Omega^p_\cl(\Delta)$ the subspace of closed forms. Let $\omega$ be an element of $\Omega^p(\Delta)$, then for every $0\leq i\leq n$, $\omega$ can be uniquely written as
\[
\omega=\sum_{I\subset\{0,\dots,n\}\setminus\{i\},|I|=p}f_I(x)\rd x_I
\]
where $f_I$ is a continuous function on $\Delta$, smooth in the interior of $\Delta$. We say that $\omega$ has \emph{constant coefficients} if $f_I$ is a constant function for every such $I$. Note that this property is independent of the choice of $i$. Let $\Omega^p_\cons(\Delta)\subset\Omega^p_\cl(\Delta)$ be the subspace of forms with constant coefficients.

For every nonempty subset $J\subset\{0,\dots,n\}$, let $\Delta^J$ be the face of $\Delta$ generated by $\{P^j\res j\in J\}$, where $P^j$ is the point with coordinates $x_i=\delta_{ij}$, and let $P^J$ be the barycenter of $\Delta^J$. For every point $P\in\Delta$, $\Delta$ is star-shaped with respect to $P$. Thus, one has the \emph{star-shaped integration map}:
\[
\cI_P\colon \Omega^p(\Delta)\to \Omega^{p-1}(\Delta)
\]
for $p\geq 1$. We briefly recall the definition. Consider the map $\gamma_P\colon \Delta\times[0,1]\to\Delta$ sending $(x,t)$ to $(1-t)P+tx$. Then
\[\cI_P(\alpha)=\int_0^1\langle\frac{\partial}{\partial t},\gamma_P^*\alpha\rangle\;\rd t,\]
where $\langle\;,\;\rangle$ denotes the contraction. If $\rd\alpha=0$, then $\rd(\cI_P(\alpha))=\alpha$.

Let $F$ be an abelian group. For every $r\geq 0$, put
\[C^r(F)=\Map(\{J\subset\{0,\dots,n\}\res|J|=r+1\},F).\]
We have a coboundary map $\delta\colon C^r(F)\to C^{r+1}(F)$ similar to the one for \v{C}ech cocycles. For $p-r\geq 1$, we define the map
\begin{align}\label{eq:star0}
\cI\colon C^r(\Omega^{p-r}(\Delta))\to C^r(\Omega^{p-r-1}(\Delta))
\end{align}
sending $\omega_r$ in the source to $\cI\omega_r$ in the target such that $(\cI\omega_r)(J)=\cI_{P^J}(\omega_r(J))$.

\begin{proposition}\label{pr:star}
Let $1\leq p\leq n$ be an integer. Let $\beta$ be an element in $\Omega^p_\cons(\Delta)$. We define $\beta_0\in C^0(\Omega^p(\Delta))$ by $\beta_0(\{j\})=\beta$ for every $0\leq j\leq n$. For $1\leq r\leq p$, we inductively define $\beta_r\in C^r(\Omega^{p-r}(\Delta))$ by the formula $\beta_r=\delta(\cI\beta_{r-1})$. Then
\begin{enumerate}
  \item $\beta_r$ belongs to $C^r(\Omega^{p-r}_\cons(\Delta))$ for $0\leq r\leq p$, and in particular $\beta_p\in C^p(\bR)$; and

  \item we have the formula
      \[\beta\res_{\Delta^I}=(-1)^{p(p+1)/2}p!\cdot\beta_p(I)\;\rd x_{i_1}\wedge\cdots\wedge\rd x_{i_p}\]
      for every subset $I=\{i_0<\dots<i_p\}\subset\{1,\dots,n\}$.
\end{enumerate}
\end{proposition}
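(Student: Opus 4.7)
The plan is to handle (1) by an induction on $r$ and reduce (2) to an explicit combinatorial computation.

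For part (1), one first computes $\cI_P$ explicitly on a constant-coefficient form $\alpha=\sum_K c_K\,\rd x_K$. From $\gamma_P^*(\rd x_i)=t\,\rd x_i+(x_i-P_i)\,\rd t$, a direct calculation yields
\[\cI_P(\alpha)=\frac{1}{|K|}\sum_K c_K\sum_l (-1)^{l-1}(x_{k_l}-P_{k_l})\,\rd x_{K\setminus\{k_l\}},\]
which decomposes canonically as $A(\alpha;P)+L(\alpha)(x)$, with $A$ a constant-coefficient form linear in $P$ and $L(\alpha)(x)$ an $x$-linear form that is \emph{independent} of $P$. Now induct on $r$ on the joint statement ``$\beta_r$ has constant coefficients and $\delta\beta_r=0$.'' The base case $\beta_0$ is immediate: its values are all equal to $\beta$, hence constant-coefficient, and $\delta\beta_0=0$. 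For the inductive step, write $(\cI\beta_r)(J)=A(\beta_r(J);P^J)+L(\beta_r(J))(x)$; then the $x$-linear contribution to $\beta_{r+1}(J')=(\delta\cI\beta_r)(J')$ equals $L((\delta\beta_r)(J'))(x)=0$ by the inductive hypothesis, so $\beta_{r+1}$ is constant-coefficient. Moreover $\delta\beta_{r+1}=\delta^2(\cI\beta_r)=0$, which closes the induction.

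For part (2), by linearity in $\beta$ it suffices to verify the identity on basis elements $\beta=\rd x_K$ for $K\subset\{1,\ldots,n\}$ of size $p$. Using $\rd x_{i_0}|_{\Delta^I}=-\sum_{k\ge 1}\rd x_{i_k}$ and $\rd x_j|_{\Delta^I}=0$ for $j\notin I$, one computes that $\beta|_{\Delta^I}=0$ unless $K\subset I$, and if $K=I\setminus\{i_l\}$ then $\beta|_{\Delta^I}=(-1)^l\,\rd x_{i_1}\wedge\cdots\wedge\rd x_{i_p}$. Thus the desired identity becomes $\beta_p(I)=(-1)^{p(p+1)/2+l}/p!$ when $K=I\setminus\{i_l\}$, and $\beta_p(I)=0$ otherwise. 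On the other hand, iterating $\beta_r=\delta(\cI\beta_{r-1})$ unfolds $\beta_p(I)$ as an alternating sum
\[\beta_p(I)=\sum_{\sigma\in S_{p+1}}\epsilon(\sigma)\,\cI_{P^{J^\sigma_{p-1}}}\cdots\cI_{P^{J^\sigma_0}}(\beta),\]
where $J^\sigma_k=\{\sigma(0),\ldots,\sigma(k)\}$ and $\epsilon(\sigma)$ is an explicit combinatorial sign; extracting the constant parts via the formula from (1) then reduces the claim to a polynomial identity in the coordinates of the barycenters $P^{J^\sigma_k}$.

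The main obstacle is the sign bookkeeping in this last combinatorial identity, which must absorb three sources of signs: the $(-1)^{l-1}$ signs internal to each $\cI_P$, the alternating signs from iterated applications of $\delta$, and the orientation signs intrinsic to the ordered simplex $\Delta^I$. The prefactor $(-1)^{p(p+1)/2}$ ultimately arises as the cumulative effect of $p$ iterations of the internal signs of $\cI_P$. A cleaner inductive alternative is to proceed by induction on $p$: verify $p=1$ directly (it is a two-term computation $\cI_{P^{j_1}}\beta-\cI_{P^{j_0}}\beta$), then peel off the outer $\cI$ and its companion $\delta$, reducing the problem to the identity for $(p-1)$-forms restricted to the codimension-$1$ faces $\Delta^{I\setminus\{i_l\}}$.
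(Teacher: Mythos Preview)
Your argument for part (1) is correct and is essentially the paper's argument in different clothing: your decomposition $\cI_P(\alpha)=A(\alpha;P)+L(\alpha)(x)$ is exactly the paper's identity $\cI'_Q(\alpha)=\cI'_O(\alpha)-\tfrac{1}{r}\cC_Q(\alpha)$ for constant $r$-forms, where $L(\alpha)(x)=\cI'_O(\alpha)$ and $A(\alpha;P)=-\tfrac{1}{r}\cC_P(\alpha)$ with $\cC_P$ the contraction by $\sum_j P_j\,\partial/\partial x_j$. The cancellation of the $L$-part under $\delta$ via $\delta\beta_{r-1}=0$ is the same mechanism in both.

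Part (2), however, is not actually proved in your proposal; you reduce it to a combinatorial identity and then stop, calling the sign bookkeeping ``the main obstacle'' without resolving it. Your suggested induction on $p$, as stated, does not work: peeling off the last $\delta\cI$ leaves you with the constant $1$-forms $\beta_{p-1}(I\setminus\{i_l\})$, but restricting the original $p$-form $\beta$ to a codimension-$1$ face $\Delta^{I\setminus\{i_l\}}$ gives zero (a $p$-form on a $(p-1)$-simplex), so there is no $(p-1)$-form to which the inductive hypothesis applies.

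The paper avoids the combinatorial mess by pushing your part-(1) computation one step further. Once you recognise $A(\alpha;P)=-\tfrac{1}{\deg\alpha}\cC_P(\alpha)$, the iterated formula $\beta_r(J)=\sum_k(-1)^kA(\beta_{r-1}(J\setminus\{j_k\});P^{J\setminus\{j_k\}})$ becomes a composition of contractions at barycenters. Since $\cC_P$ is \emph{linear} in $P$ and $P^J=\tfrac{1}{|J|}\sum_{j\in J}P^j$, each $\cC_{P^{J}}$ expands as an average of the vertex contractions $\cC_{P^j}$. These anticommute, so the sum over flags collapses to the closed formula
\[
\beta_r(I)=\frac{(-1)^r}{p(p-1)\cdots(p-r+1)}\sum_{j=0}^r(-1)^j\bigl(\cC_{P^{i_0}}\circ\cdots\circ\widehat{\cC_{P^{i_j}}}\circ\cdots\circ\cC_{P^{i_r}}\bigr)(\beta),
\]
and at $r=p$, writing $\beta$ in the coordinates that omit $x_{i_0}$ kills all summands except $j=0$, giving $\beta_p(I)=\tfrac{(-1)^p}{p!}(\cC_{P^{i_1}}\circ\cdots\circ\cC_{P^{i_p}})(\beta)$; reversing the order of the anticommuting contractions produces the factor $(-1)^{p(p+1)/2}$ and identifies the result with the coefficient of $\rd x_{i_1}\wedge\cdots\wedge\rd x_{i_p}$ in $\beta$, which is exactly $\beta\res_{\Delta^I}$. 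This single algebraic observation about $\cC_P$ is what converts your intractable sum over $(p+1)!$ flags into a one-line computation.
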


\begin{proof}
We fix some $i\in\{0,\dots,n\}$ and write
\begin{align}\label{eq:star7}
\beta=\sum_{I\subset\{0,\dots,n\}\setminus\{i\},|I|=p}f_I\rd x_I
\end{align}
for some $f_I\in\bR$. Regard $\beta$ as a $p$-form on $\bR^{n+1}$ with constant coefficients, which we denote by $\beta'\in \Omega^p_\cons(\bR^{n+1})$. For a point $Q\in\bR^{n+1}$ and $r\geq 1$, denote by
\[\cI'_Q\colon \Omega^r(\bR^{n+1})\to \Omega^{r-1}(\bR^{n+1})\]
the corresponding star-shaped integral on $\bR^{n+1}$ with respect to $Q$. If $Q$ belongs to $\Delta$, then for $\alpha\in\Omega^r(\bR^{n+1})$, we have
\begin{align}\label{eq:star1}
\cI'_Q(\alpha)\res_{\Delta}=\cI_Q(\alpha\res_{\Delta}).
\end{align}

Suppose that $Q=(q_0,\dots,q_n)$; we define a map
\[\cC_Q\colon \Omega^r_\cons(\bR^{n+1})\to \Omega^{r-1}_\cons(\bR^{n+1})\]
sending $\alpha$ to the contraction of $\alpha$ by $\sum_{j=0}^nq_j\frac{\partial}{\partial x_j}$ (from left). For example,
\[
\cC_{P^j}(\rd x_0\wedge\cdots\wedge\rd x_{r-1})=
\begin{cases}
(-1)^j\rd x_0\wedge\cdots\wedge\widehat{\rd x_j}\wedge\cdots\wedge\rd x_{r-1},\qquad &0\leq j\leq r-1;\\
0, &r\leq j\leq n.
\end{cases}
\]
Denote by $O$ the origin of $\bR^{n+1}$. It is an elementary exercise to see that for $\alpha\in\Omega^r_\cons(\bR^{n+1})$, we have
\begin{align}\label{eq:star2}
\cI'_Q(\alpha)-\cI'_O(\alpha)=-\frac{1}{r}\cC_Q(\alpha).
\end{align}
For $0\leq r\leq p$, we define an element $\beta'_r\in C^r(\Omega^{p-r}_\cons(\bR^{n+1}))$ by the formula
\begin{align}\label{eq:star6}
\beta'_r(I)=\frac{(-1)^r}{p(p-1)\cdots(p-r+1)}\sum_{j=0}^r(-1)^j
(\cC_{P^{i_0}}\circ\cdots\circ\widehat{\cC_{P^{i_j}}}\circ\cdots\circ\cC_{P^{i_r}})(\beta').
\end{align}
for $I=\{i_0<\cdots<i_r\}$. We claim that
\begin{align}\label{eq:star3}
\beta'_r=\delta(\cI'\beta'_{r-1})
\end{align}
where $\cI'\colon C^r(\Omega^{p-r}(\bR^{n+1}))\to C^r(\Omega^{p-r-1}(\bR^{n+1}))$ is defined similarly as \eqref{eq:star0}. In fact, by the definition of $\delta$, we have for $r\geq 1$,
\begin{align}\label{eq:star4}
\delta(\cI'\beta'_{r-1})&=\sum_{j=0}^r(-1)^j\cI'_{P^{I\setminus\{i_j\}}}(\beta'_{r-1}(I\setminus\{i_j\})).
\end{align}
As $\beta'_{r-1}$ is a closed cocycle, we have $\sum_{j=0}^r(-1)^j\beta'_{r-1}(I\setminus\{i_j\})=0$. Thus
\begin{align}\label{eq:star5}
\eqref{eq:star4}&=\sum_{j=0}^r(-1)^j(\cI'_{P^{I\setminus\{i_j\}}}-\cI'_O)(\beta'_{r-1}(I\setminus\{i_j\})) \notag\\
&=\frac{-1}{p-r+1}\sum_{j=0}^r(-1)^j\cC_{P^{I\setminus\{i_j\}}}(\beta'_{r-1}(I\setminus\{i_j\}))\qquad \text{by \eqref{eq:star2}} \notag\\
&=\frac{(-1)^r}{p(p-1)\cdots(p-r+1)}\sum_{j=0}^r(-1)^j\cC_{P^{I\setminus\{i_j\}}}
\(\sum_{k=0}^r\epsilon(j,k)\cC(j,k)(\beta')\).
\end{align}
Here, $\epsilon(j,k)$ is set to be $(-1)^k$ (resp.\ $0$, $(-1)^{k+1}$) if $k<j$ (resp.\ $k=j$, $k>j$), and $\cC(j,k)$ is the map
$\cC_{P^{i_0}}\circ\cdots\circ\cC_{P^{i_r}}$ with $\cC_{P^{i_j}}$ and $\cC_{P^{i_k}}$ removed. Since
\[\cC_{P^{I\setminus\{i_j\}}}=\frac{1}{r}(\cC_{P^{i_0}}+\cdots+\widehat{\cC_{P^{i_j}}}+\cdots+\cC_{P^{i_r}}),\]
it is easy to see that
\[
\eqref{eq:star5}=\frac{(-1)^r}{p(p-1)\cdots(p-r+1)}\sum_{j=0}^r(-1)^j\cC(j,j)(\beta)=\beta'_r(I).
\]
Thus \eqref{eq:star3} holds. By \eqref{eq:star1}, we have $\beta_r=\beta'_r\res_\Delta$ which belongs to $C^r(\Omega^{p-r}_\cons(\Delta))$. Thus (1) follows.

Now we compute the value of $\beta_p(I)=\beta'_p(I)$ for $I=\{i_0<\cdots<i_p\}$. Note that the representative $\beta$ \eqref{eq:star7} can be chosen with respect to an arbitrary $i\in\{0,\dots,n\}$. In particular, we may take $i=i_0$. By \eqref{eq:star6}, we have
\[\beta_p(I)=\frac{(-1)^p}{p!}(\cC_{P^{i_1}}\circ\cdots\circ\cC_{P^{i_p}})(\beta)
=\frac{(-1)^{p(p+1)/2}}{p!}(\cC_{P^{i_p}}\circ\cdots\circ\cC_{P^{i_1}})(\beta).\]
Thus (2) follows as
\[\beta\res_{\Delta^I}=(\cC_{P^{i_p}}\circ\cdots\circ\cC_{P^{i_1}})(\beta)\;\rd x_{i_1}\wedge\cdots\wedge\rd x_{i_p}.\]
\end{proof}

\begin{bibdiv}
\begin{biblist}

\bib{Berk93}{article}{
   author={Berkovich, Vladimir G.},
   title={\'Etale cohomology for non-Archimedean analytic spaces},
   journal={Inst. Hautes \'Etudes Sci. Publ. Math.},
   number={78},
   date={1993},
   pages={5--161 (1994)},
   issn={0073-8301},
   review={\MR{1259429}},
}

\bib{Berk99}{article}{
   author={Berkovich, Vladimir G.},
   title={Smooth $p$-adic analytic spaces are locally contractible},
   journal={Invent. Math.},
   volume={137},
   date={1999},
   number={1},
   pages={1--84},
   issn={0020-9910},
   review={\MR{1702143}},
   doi={10.1007/s002220050323},
}

\bib{Berk00}{article}{
   author={Berkovich, Vladimir G.},
   title={An analog of Tate's conjecture over local and finitely generated
   fields},
   journal={Internat. Math. Res. Notices},
   date={2000},
   number={13},
   pages={665--680},
   issn={1073-7928},
   review={\MR{1772523}},
   doi={10.1155/S1073792800000362},
}

\bib{BGS}{article}{
   author={Bloch, S.},
   author={Gillet, H.},
   author={Soul\'e, C.},
   title={Non-Archimedean Arakelov theory},
   journal={J. Algebraic Geom.},
   volume={4},
   date={1995},
   number={3},
   pages={427--485},
   issn={1056-3911},
   review={\MR{1325788}},
}

\bib{CLD12}{article}{
   author={Chambert-Loir, A.},
   author={Ducros, A.},
   title={Formes diff\'{e}rentielles r\'{e}alles et courants sur les espaces de Berkovich},
   note={\href{http://arxiv.org/abs/1204.6277}{arXiv:math/1204.6277}},
   date={2012},
}

\bib{dJ96}{article}{
   author={de Jong, A. J.},
   title={Smoothness, semi-stability and alterations},
   journal={Inst. Hautes \'Etudes Sci. Publ. Math.},
   number={83},
   date={1996},
   pages={51--93},
   issn={0073-8301},
   review={\MR{1423020}},
}

\bib{IKMZ16}{article}{
   author={Itenberg, I.},
   author={Katzarkov, L.}
   author={Mikhalkin, G.},
   author={Zharkov, I.},
   title={Tropical Homology},
   note={\href{https://arxiv.org/abs/1604.01838}{arXiv:math/1604.01838}},
   date={2016},
}

\bib{Ito05}{article}{
   author={Ito, Tetsushi},
   title={Weight-monodromy conjecture over equal characteristic local
   fields},
   journal={Amer. J. Math.},
   volume={127},
   date={2005},
   number={3},
   pages={647--658},
   issn={0002-9327},
   review={\MR{2141647}},
}

\bib{Jel16}{article}{
   author={Jell, P.},
   title={A Poincar\'e lemma for real-valued differential forms on Berkovich
   spaces},
   journal={Math. Z.},
   volume={282},
   date={2016},
   number={3-4},
   pages={1149--1167},
   issn={0025-5874},
   review={\MR{3473662}},
   doi={10.1007/s00209-015-1583-8},
}

\bib{Jel}{article}{
   author={Jell, P.},
   title={Real-valued differential forms on Berkovich analytic spaces and their cohomology},
   note={PhD Thesis, available at \url{https://epub.uni-regensburg.de/34788/1/ThesisJell.pdf}},
   date={2016},
}

\bib{JSS15}{article}{
   author={Jell, P.},
   author={Shaw, K.},
   author={Smacka, J.},
   title={Superforms, tropical cohomology and Poincar\'{e} duality},
   note={\href{http://arxiv.org/abs/1512.07409}{arXiv:math/1512.07409}},
   date={2015},
}

\bib{JW16}{article}{
   author={Jell, P.},
   author={Wanner, V.},
   title={Poincar\'{e} duality for the real-valued de Rham cohomology of non-archimedean Mumford curves},
   note={\href{https://arxiv.org/abs/1612.01889}{arXiv:math/1612.01889}},
   date={2016},
}

\bib{Liu}{article}{
   author={Liu, Y.},
   title={Weight decomposition of de Rham cohomology sheaves and tropical cycle classes for non-Archimedean spaces},
   note={\href{https://arxiv.org/abs/1702.00047}{arXiv:math/1702.00047}},
   date={2017},
}

\bib{MZ13}{article}{
   author={Mikhalkin, G.},
   author={Zharkov, I.},
   title={Tropical eigenwave and intermediate Jacobians},
   note={\href{https://arxiv.org/abs/1302.0252}{arXiv:math/1302.0252}},
   date={2013},
}

\bib{Pay09}{article}{
   author={Payne, Sam},
   title={Analytification is the limit of all tropicalizations},
   journal={Math. Res. Lett.},
   volume={16},
   date={2009},
   number={3},
   pages={543--556},
   issn={1073-2780},
   review={\MR{2511632}},
   doi={10.4310/MRL.2009.v16.n3.a13},
}

\bib{RZ82}{article}{
   author={Rapoport, M.},
   author={Zink, Th.},
   title={\"Uber die lokale Zetafunktion von Shimuravariet\"aten. Monodromiefiltration und verschwindende Zyklen in ungleicher Charakteristik},
   language={German},
   journal={Invent. Math.},
   volume={68},
   date={1982},
   number={1},
   pages={21--101},
   issn={0020-9910},
   review={\MR{666636 (84i:14016)}},
   doi={10.1007/BF01394268},
}

\bib{Sai03}{article}{
   author={Saito, Takeshi},
   title={Weight spectral sequences and independence of $l$},
   journal={J. Inst. Math. Jussieu},
   volume={2},
   date={2003},
   number={4},
   pages={583--634},
   issn={1474-7480},
   review={\MR{2006800 (2004i:14022)}},
   doi={10.1017/S1474748003000173},
}

\end{biblist}
\end{bibdiv}

\end{document}